\def\al{\alpha}
\def\be{\beta}
\def\de{\delta}
\def\si{\sigma}
\def\Si{\Sigma}
\def\ph{\phi}
\newcommand{\calA}{{\mathcal A}}
\newcommand{\calB}{{\mathcal B}}
\newcommand{\calD}{{\mathcal D}}
\newcommand{\calF}{{\mathcal F}}
\newcommand{\calI}{{\mathcal I}}
\newcommand{\calT}{{\mathcal T}}
\newcommand{\calX}{{\mathcal X}}
\newcommand{\calY}{{\mathcal Y}}
\newcommand{\iso}{\cong}
\newcommand{\bbI}{{\mathbb I}}
\newcommand{\bbN}{{\mathbb N}}
\newcommand{\bbR}{{\mathbb R}}
\newcommand{\bbZ}{{\mathbb Z}}
\newcommand{\bbU}{{\mathbb U}}
\newcommand{\ds}{\oplus}
\newcommand{\Ds}{\bigoplus}
\renewcommand{\mod}{\operatorname{mod}}
\newcommand{\proj}{\operatorname{proj}}
\newcommand{\inj}{\operatorname{inj}}
\newcommand{\Hom}{\operatorname{Hom}}
\newcommand{\RHom}{\operatorname{RHom}}
\newcommand{\RF}{\operatorname{RF}}
\newcommand{\End}{\operatorname{End}}
\newcommand{\Ext}{\operatorname{Ext}}
\newcommand{\Tor}{\operatorname{Tor}}
\newcommand{\rep}{\operatorname{rep}}
\newcommand{\Ker}{\operatorname{Ker}}
\newcommand{\Cok}{\operatorname{Coker}}
\newcommand{\im}{\operatorname{Im}}
\newcommand{\coim}{\operatorname{Coim}}
\newcommand{\blank}{\operatorname{-}}
\newcommand{\op}{\operatorname{op}}
\newcommand{\AR}{\mathrm{\Gamma}}
\def\blank{\operatorname{-}}
\def\id{1\kern-.25em{\text{{\rm l}}}} 
\newcommand{\Db}{\mathsf{D}^{\mathrm{b}}}
\newcommand{\Kb}{\mathsf{K}^{\mathrm{b}}}
\newcommand{\Cb}{\mathsf{C}^{\mathrm{b}}}
\newcommand{\Supp}{\operatorname{Supp}}
\DeclareMathOperator{\starnd}{\star_{\text{nd}}}
\newcommand{\pref}{\prettyref}
\DeclareMathOperator{\MS}{{\operatorname{SS}}}
\DeclareMathOperator{\Vect}{\operatorname{Vect}}
\newcommand{\Sh}{\operatorname{Sh}}
\newcommand{\simto}{\xrightarrow{\sim}}
\theoremstyle{plain}
\newtheorem{theorem}{Theorem}[section]
\crefname{theorem}{Theorem}{Theorems}
\newtheorem{corollary}[theorem]{Corollary}
\crefname{corollary}{Corollary}{Corollaries}
\newtheorem{lemma}[theorem]{Lemma}
\crefname{lemma}{Lemma}{Lemmas}
\newtheorem{definition-lemma}[theorem]{Definition-Lemma}
\crefname{claim}{Claim}{Claims}
\newtheorem{proposition}[theorem]{Proposition}
\crefname{proposition}{Proposition}{Propositions}
\crefname{conjecture}{Conjecture}{Conjectures}
\crefname{assumption}{Assumption}{Assumptions}
\theoremstyle{definition}
\newtheorem{definition}[theorem]{Definition}
\crefname{definition}{Definition}{Definitions}
\newtheorem{remark}[theorem]{Remark}
\crefname{remark}{Remark}{Remarks}
\newtheorem{example}[theorem]{Example}
\crefname{example}{Example}{Examples}
\crefname{notation}{Notation}{Notations}
\title[DAST for zigzag PM]{Algebraic stability theorem for derived categories of zigzag persistence modules}
\author{Yasuaki Hiraoka \and Yuichi Ike \and Michio Yoshiwaki}
\address[Y.~Hiraoka]{Kyoto University Institute for Advanced Study, WPI-ASHBi, Kyoto University/RIKEN Center for Advanced Intelligence Project}
\email{hiraoka.yasuaki.6z@kyoto-u.ac.jp}
\address[Y.~Ike]{Fujitsu Ltd., Fujitsu Research}
\email{yuichi.ike.1990@gmail.com, ike.yuichi@fujitsu.com}
\address[M.~Yoshiwaki]{RIKEN Center for Advanced Intelligence Project/Osaka City University Advanced Mathematical Institute(OCAMI)}
\email{michio.yoshiwaki@riken.jp, yosiwaki@sci.osaka-cu.ac.jp}
\date{\today}
\keywords{Algebraic stability theorem, Derived category, Auslander--Reiten quiver, Persistence module, Zigzag persistence module, Constuctible sheaf}
\subjclass[2010]{16G20, 16E35, 55N99, 35A27}
\begin{document}

\maketitle

\begin{abstract}
We study distances on zigzag persistence modules from the viewpoint of derived categories and Auslander--Reiten quivers. 
The derived category of ordinary persistence modules is derived equivalent to that of arbitrary zigzag persistence modules, depending on a classical tilting module.
Through this derived equivalence, we define and compute distances on the derived category of arbitrary zigzag persistence modules and prove an algebraic stability theorem.
We also compare our distance with the distance for purely zigzag persistence modules introduced by Botnan--Lesnick and the sheaf-theoretic convolution distance due to Kashiwara--Schapira.
\end{abstract}

\section{Introduction}

Topological data analysis has recently become popular for studying the shape of data in various research areas (\citealt{hiraoka2016hierarchical,lee2017quantifying,oyama2019hepatic,saadatfar2017pore}). 
In topological data analysis, one of the standard tools is persistent homology, the original concept for which was introduced by \citet*{edelsbrunner2002topological}.
For a filtration 
\begin{equation}
    \mathbb{X}:X_1 \hookrightarrow X_2 \hookrightarrow \cdots \hookrightarrow X_n
\end{equation}
of topological spaces, the \emph{$q$-th persistent homology} is defined by
\begin{equation}
    H_q (\mathbb{X}) : H_q (X_1) \to H_q (X_2) \to \cdots \to H_q (X_n),
\end{equation}
where $H_q (\blank)$ is the $q$-th homology functor with a field coefficient.
Persistent homology is utilized to study the persistence of topological features in the filtration $\mathbb{X}$ such as connected components, loops, voids, and so on, for each dimension $q$. 
The algebraic structure of persistent homology is expressed by the notion of \emph{(1D) persistence modules}, which are representations of an equioriented $A_n$-type quiver.
This was pointed out by \citet{carlsson2010zigzag}.

Representations of a non-equioriented $A_n$ are also studied from both of the theoretical and practical perspectives in topological data analysis. 
Here, a representation of an $A_n$-type quiver with alternating (resp.\ arbitrary) orientation is called a \emph{purely zigzag (resp.\ zigzag) persistence module}, while 1D persistence modules are said to be \emph{ordinary}.
Zigzag persistence modules can also be applied to address characteristic topological features not captured by the theory of ordinary persistence modules (\citealt{carlsson2010zigzag}). 
For example, let us study time-series data given by a sequence $X_1, \cdots, X_t, \cdots ,X_T$ of topological spaces $X_t$ for each time $t$. 
In general, this sequence is not a filtration with respect to $t$, but we can consider the following zigzag diagram:
\begin{equation}
    X_1 \hookrightarrow X_1 \cup X_2 \hookleftarrow X_2 \hookrightarrow \cdots \hookrightarrow X_{T-1} \cup X_T \hookleftarrow X_T.
\end{equation}
By applying a homology functor $H_q (\blank)$ to this diagram, we obtain a purely zigzag persistence module
\begin{equation}
    H_q(X_1) \to H_q(X_1 \cup X_2) \leftarrow H_q(X_2) \to \cdots \to H_q(X_{T-1} \cup X_T) \leftarrow H_q(X_T).
\end{equation}

From \citet{gabriel1972unzerlegbare} and the Krull-Schmidt Theorem, any representation of an $A_n$-quiver can be uniquely decomposed into \emph{interval representations}, which are exactly indecomposable representations in this setting.
The endpoints of these interval representations define the birth-death parameters of the topological features, and those topological features are summarized in a \emph{barcode} (or a \emph{persistence diagram}). 
Recently, the notion of barcodes with grading was introduced to capture topological features for all degree in persistent homology. 
A barcode with grading is defined for an object in the derived category of persistence modules, namely, a cochain complex of persistence modules, and has been studied by \citet{berkouk2018derived}. 
Since there is a fully faithful embedding of the category of persistence modules into its derived category, the usual barcode of a persistence module is a special case of a barcode with grading.
This generalization enables us to discover some interactions between different degrees in persistent homology. 
We cannot capture these interactions while considering only one degree.

The Krull-Schmidt Theorem reduces the description of the category of representations of quivers into that of the full subcategory consisting of indecomposable representations.
To explicitly compute indecomposable representations, 
the Auslander-Reiten (AR) quiver was introduced (see \citealt{auslander1997representation}) and has been studied in representation theory of finite-dimensional algebras since the 1970s.
For details on the AR quiver, refer to \citet[Section 3.1]{schiffler2014quiver} (see \citealt[Chapter IV]{assem2006elements} for a more general setting).
From the viewpoint of AR theory, the barcode of a persistence module can be defined as a map from the set of vertices in the AR quiver of the equioriented $A_n$-type quiver to the integers, sending an interval representation to its multiplicity. 
Moreover, the AR quiver of the derived category of persistence modules can be defined. 
Then the \emph{barcode} of an object of the derived category is canonically defined as a map from the set of vertices in the AR quiver to the integers. 

For persistent homology, it is significant that a stability theorem holds, which says that the barcode is stable (precisely, 1-Lipschitz) with respect to small changes in a given dataset.
This stability theorem was first proved by \citet*{cohen2007stability} for the persistent homology of the sublevel set filtration induced by an $\bbR$-valued function. 
The algebraic perspective on persistent homology allows for a generalization of the stability theorem, the so-called algebraic stability theorem (AST). 
\citet{chazal2009proximity} introduced the interleaving distance between ordinary persistence modules to weaken the assumptions needed for the stability theorem, and then proved the AST with that distance, which states that the barcode is stable with respect to the interleaving distance. 
Following this algebraic generalization, \citet{bauer2015induced} provided a simpler proof of the AST via the induced matching theorem (IMT) (see Theorem~\ref{thm:IMT}).
Note that the converse of the AST also holds (\citealt{lesnick2015theory}), hence giving the isometry theorem for persistence modules.

It is natural to expect that an AST holds for  purely zigzag persistence modules, and it was indeed obtained in \citet{botnan2018algebraic}.
For this purpose, they introduced a distance on purely zigzag persistence modules, which we call the \emph{block distance} following \citet{meehan2020persistence}.
\citet{bjerkevik2016stability} improved the theorem with a tight bound and provided an isometry theorem for purely zigzag persistence modules.
Note that zigzag persistence modules in \citet{botnan2018algebraic} 
and
\cite{bjerkevik2016stability} are purely zigzag ones in our convention.

In this paper, we prove an AST for the derived category of zigzag persistence modules through a correspondence between the AR quivers of ordinary and zigzag persistence modules. 
The distance between interval representations can be understood through the AR quiver, thus that for the derived category should be understood in the same way.
For our strategy, we consider a derived equivalence, that is, an equivalence as triangulated categories between the derived categories, which induces an isomorphism between their AR quivers. 
In particular, we treat the derived equivalence between the categories of ordinary and zigzag persistence modules associated with a classical tilting modules (see \citealt{happel1988triangulated}) since we can compute the induced distance from the equivalence explicitly. 
Our strategy enables us to obtain an AST for the wider class (i.e., arbitrary orientations) in a unified manner than the result of \citet{botnan2018algebraic}. 
Moreover, our approach gives clear understanding of distances on the derived category of zigzag persistence modules through AR quivers.

We first define two distances on the derived category of ordinary persistence modules and generalize the AST in the derived setting.
For a derived category $\calD$ equivalent to the derived category of ordinary persistence modules, two distances on $\calD$ can be canonically obtained from the ones on the latter derived category. 
In fact, the derived equivalences preserve isomorphisms, the indecomposability of objects, and the AR quivers, which can be utilized to define and compute the distances.
The derived categories of ordinary and zigzag persistence modules are derived equivalent associated with a classical tilting module (see \citealt{assem2006elements,brenner1980generalizations,bongartz1981tilted,happel1982tilted}).
Indeed, for a classical tilting module $T$, the right derived functor of $\Hom (T, \blank)$ gives a derived equivalence (see \citealt{happel1988triangulated}).
This derived equivalence induces the distances on the derived category of zigzag persistence modules and provides an AST for the derived category of zigzag persistence modules.
More strongly, we also obtain an isometry theorem in this setting.

By construction, our distance can be computed explicitly through the correspondence of the AR quivers of the derived categories of zigzag and ordinary persistence modules.
Based on the calculation, we compare our induced distance with other two distances.
The first one is the block distance introduced by \citet{botnan2018algebraic}.
We will show that the block distance utilized in the above theorem is incomparable with ours.
The second one is the \emph{convolution distance} on the derived category of constructible sheaves due to \citet{kashiwara2018persistent}.
\citet{berkouk2018derived} proved the derived isometry theorem for the convolution distance.
Indeed, they introduced a bottleneck-type distance on graded barcodes and then proved that this coincides with the convolution distance. 
To compare our distance with the convolution distance, we show that a non-derived version of the convolution distance is equal to the block distance in \cite{botnan2018algebraic}.
For the proof, we investigate (i) the relation between representations of the infinite purely zigzag quiver and constructible sheaves on $\bbR$, and (ii) the relation between 2D persistence modules and constructibe sheaves on $\bbR$, which was studied by \citet{berkouk2019level}.
As a result, we confirm that the convolution distance is also incomparable with our induced distance.

The remainder of the paper is organized as follows. 
Section~\ref{sec:prelim} reviews the basic concepts of persistent homology from the viewpoint of representation theory and recalls the AST in the ordinary setting. 
Section~\ref{sec:dercat} reviews the basics of derived categories and studies the derived category of persistence modules.
Then we introduce the interleaving distance and the bottleneck distance in the derived setting.
Section~\ref{sec:main} proves the main results: the AST for the derived category of ordinary persistence modules and that for zigzag persistence modules.
Section~\ref{sec:DIST} extends the result of Section~\ref{sec:main} to isometry theorems.
In Section~\ref{sec:Dircal}, we explicitly calculate our induced distance on zigzag persistence modules. 
Section~\ref{sec:comparison} confirms that our induced distance and the block distance  are incomparable in the purely zigzag setting.
Finally, Section~\ref{sec:compsheaf} also verifies that the convolution distance is incomparable with ours via the block distance. 

\section{Preliminaries} \label{sec:prelim}
\subsection{Quiver representations}
\label{subsec:quiver_representation}
Throughout this paper, $\Bbbk$ denotes an algebraically closed field, and
all vector spaces, algebras, and linear maps are assumed to be finite-dimensional
$\Bbbk$-vector spaces, finite-dimensional $\Bbbk$-algebras, and $\Bbbk$-linear maps, respectively.
Furthermore, all categories and functors are assumed to be additive.

A \emph{quiver} $Q$ is a directed graph.
Formally, a quiver $Q$ is a quadruple $Q=(Q_0,Q_1,s,t)$ of 
sets $Q_0$ of vertices and $Q_1$ of arrows, and maps $s,t \colon Q_1 \to Q_0$.
We draw an arrow $\alpha\in Q_1$ as $\alpha \colon 1\to 2$ if $s(\alpha)=1, t(\alpha)=2\in Q_0$.
The \emph{opposite quiver} $Q^{\op}$ of a quiver $Q=(Q_0,Q_1,s,t)$ is $Q^{\op}=(Q_0,Q_1,t,s)$. 
For example, the opposite quiver of $1\to 2$ is $1 \leftarrow 2$.
A quiver $Q$ is \emph{finite} if $Q_0$ and $Q_1$ are finite. 

A \emph{quiver morphism} $f$ from a quiver $Q=(Q_0,Q_1,s,t)$ to a quiver $Q'=(Q'_0,Q'_1,s',t')$ is a pair $f=(f_0,f_1)$ of maps $f_i \colon Q_i\to Q'_i$ for $i=0,1$ such that $s'\circ f_1 = f_0 \circ s$ and $t'\circ f_1 = f_0 \circ t$. 
For example, $\id_Q = (\id_{Q_0},\id_{Q_1})$ is a quiver morphism $Q \to Q$, which is called the \emph{identity morphism}. 
A quiver morphism $f \colon Q\to Q'$ is called an \emph{isomorphism} if there is a quiver morphism $g \colon Q' \to Q$ such that $f\circ g = \id_{Q'}$ and $g \circ f = \id_Q$.
A quiver $Q$ is \emph{isomorphic} to a quiver $Q'$, denoted by $Q \cong Q'$, if there is an isomorphism from $Q$ to $Q'$.
For example, a quiver of the form $1 \xrightarrow{\al} 2$ is isomorphic to a quiver of the form $X \xrightarrow{\be} Y$. Indeed, we have an isomorphism $f=(f_0,f_1)$ defined by $f_0(1)=X,f_0(2)=Y$, and $f_1(\al)=\be$.

Here, we introduce the $A_n$-type quiver $A_n(a)$ with orientation $a$, whose underlying graph is the Dynkin diagram of type $A:1$ \textbf{---} $2$ \textbf{---} $\cdots$ \textbf{---} $n$ for $n\in \bbN$.
Then $A_n(a)$ is the quiver 
\begin{equation}
    1 \leftrightarrow 2 \leftrightarrow \cdots \leftrightarrow n,
\end{equation}
where $\leftrightarrow$ means  $\to$ or $\leftarrow$ assigned by the orientation $a$.
In this paper, the following $A_n$-type quivers with certain orientations are frequently used.
The $A_n$-type quiver with equi-orientation 
\begin{equation}
    1\to 2 \to \cdots \to n
\end{equation}
is called the \emph{equioriented $A_n$-type quiver}, 
which is denoted by $A_n(=A_n(e))$. 
The $A_n$-type quiver with alternating orientation is called a \emph{purely zigzag $A_n$-type quiver}, which is denoted by $A_n(z)$.
Moreover, if the vertex $1$ of a purely zigzag $A_n$-type quiver $Q$ is a sink vertex, $Q$ is denoted by $A_n (z_1)$.
Otherwise, it is denoted by $A_n (z_2)$.
Namely, $A_n(z_1)$ is the following quiver:
\begin{equation}
    1\leftarrow 2 \to 3 \leftarrow \cdots \to n \text{ if $n$ is odd, }
1\leftarrow 2 \to 3 \leftarrow \cdots \leftarrow n \text{ if $n$ is even, } 
\end{equation}
and $A_n(z_2)$ is the following quiver:
\begin{equation}
    1\to 2 \leftarrow 3 \to \cdots \leftarrow n \text{ if $n$ is odd, }
1\to 2 \leftarrow 3 \to \cdots \to n \text{ if $n$ is even. } 
\end{equation}

A \emph{representation} $M$ of a quiver $Q$ is a family of vector spaces $M_x$ at each vertex $x \in Q_0$ and linear maps $M_\al$ on each arrow $\al\in Q_1$.
For example, a representation $M$ of the equioriented $A_n$-type quiver 
\begin{equation}
    A_n: 1\xrightarrow{\alpha_{1,2}} 2 \xrightarrow{\alpha_{2,3}} \cdots \xrightarrow{\alpha_{n-1,n}} n
\end{equation}
has the following form:
\begin{equation}
    M_1\xrightarrow{M_{\alpha_{1,2}}} M_2 \xrightarrow{M_{\alpha_{2,3}}} \cdots \xrightarrow{M_{\alpha_{n-1,n}}} M_n.
\end{equation}
A \emph{subrepresentation} $N$ of $M$ is defined as a representation of $Q$ such that $N_x \subseteq M_x$ for each vertex $x\in Q_0$ and $N_{\al}=M_{\al}|_{N_x}$ for each arrow $\al\colon x\to y\in Q_1$. 
The \emph{direct sum} $M \oplus N$ of representations $M$ and $N$ is defined by $(M \oplus N)_x = M_x \oplus N_x$ for each vertex $x\in Q_0$ and $(M \oplus N)_\al = M_\al \oplus N_\al$ for each arrow $\al\in Q_1$. 
The \emph{dimension} of  $M$ is defined by $\dim M:= \sum_{x\in Q_0} \dim M_x $.
All representations $M$ are assumed to be \emph{pointwise finite-dimensional}, namely $\dim M_x < \infty$ for each $x\in Q_0$. 
When $Q$ is finite, this is just \emph{finite-dimensional}, that is, $\dim M < \infty$. 

Let $M,N$ be representations of $Q$.
Then a \emph{morphism} $f \colon M \to N$ is a family of linear maps $f_x \colon M_x \to N_x$ on each vertex $x\in Q_0$ such that the following diagram commutes for any arrow $\alpha:x \to y \in Q_1$:
\begin{equation}
    \xymatrix{
M_x \ar[r]^{f_x} \ar[d]_{M_{\alpha}} & N_x \ar[d]^{N_{\alpha}}\\
M_y \ar[r]_{f_y} & N_y. \\
}
\end{equation}
For example, $\id_M = (\id_{M_x})_{x\in Q_0}$ is a morphism $M \to M$, which is called the \emph{identity morphism}. 
A morphism $f \colon M\to N$ is called an \emph{isomorphism} if there is a morphism $g \colon N \to M$ such that $f\circ g = \id_N$ and $g \circ f = \id_M$.
A representation $M$ is \emph{isomorphic} to a representation $N$, denoted by $M \cong N$, if there is an isomorphism from $M$ to $N$.
Moreover, a non-zero representation $M$ is said to be \emph{indecomposable} if 
$M \cong N \oplus L$ implies $N=0$ or $L=0$. 

The abelian category of representations of $Q$ is denoted by $\rep_{\Bbbk} Q$. 
Note that $\rep_{\Bbbk} Q$ is a Krull-Schmidt category when $Q$ is finite (see \citealt[p.11, Theorem 1.2]{schiffler2014quiver} for example).
Indeed, 
for any $M \in \rep_{\Bbbk} Q$, we have unique decomposition 
\begin{equation}
    M \cong M^1 \oplus \cdots \oplus M^s
\end{equation}
up to permutations and isomorphisms,
where each $M^i$ is indecomposable.
More generally, when $Q$ is the infinite zigzag quiver (see Section~\ref{subsec:bldist}), any $M\in\rep_{\Bbbk} Q$ has unique indecomposable (infinite) decomposition up to permutations and isomorphisms by Krull-Schmidt-Remak-Azumaya Theorem since every indecomposable representation of $Q$ is an interval one (see \citealt{botnan2017interval}).

\subsection{Persistence modules}
\label{subsec:per}

We call each $M\in \rep_{\Bbbk} A_n$, each $N\in \rep_{\Bbbk} A_n(z)$, and each $L\in \rep_{\Bbbk} A_n(a)$  a \emph{(ordinary) persistence module},  a \emph{purely zigzag persistence module}, and  a \emph{zigzag persistence module}, respectively.
In this subsection, we will define the internal morphisms of an ordinary persistence module and an endofunctor of the category of ordinary persistence modules in order to define the interleaving distance.

For any $A_n$-type quiver $A_n(a)$, $\alpha_{x,y}$ denotes the arrow between $x$ and $y$ with $1\leq x < y \leq n$. 
Then the equioriented $A_n$-type quiver $A_n$ is 
\begin{equation}
    A_n: 1\xrightarrow{\ \alpha_{1,2}\ } 2 \xrightarrow{\ \alpha_{2,3}\ } \cdots \xrightarrow{\alpha_{n-1,n}} n
\end{equation}
and a persistence module $M$ has the form
\begin{equation}
M_1 \xrightarrow{\ M_{\alpha_{1,2}}\ } M_2 \xrightarrow{\ M_{\alpha_{2,3}}\ } \cdots \xrightarrow{M_{\alpha_{n-1,n}}} M_n.
\end{equation}
Moreover, when $n$ is odd, 
the purely zigzag $A_n$-type quiver $A_n(z_1)$ is
\begin{equation}
1\xleftarrow{\ \alpha_{1,2}\ } 2 \xrightarrow{\ \alpha_{2,3}\ } \cdots \xrightarrow{\alpha_{n-1,n}} n
\end{equation} 
and a purely zigzag persistence module $M\in \rep_{\Bbbk} A_n(z_1)$ has the form 
\begin{equation}
M_1 \xleftarrow{\ M_{\alpha_{1,2}}\ } M_2 \xrightarrow{\ M_{\alpha_{2,3}}\ } \cdots \xrightarrow{M_{\alpha_{n-1,n}}} M_n. 
\end{equation}
In other cases, we can similarly express the zigzag $A_n$-type quivers and the zigzag persistence modules.  

\begin{definition} \label{dfn:per}
Let $M,N$ be persistence modules and $\de$ an integer.\\
(1) For $1\leq s\leq t \leq n$,
the linear map $\phi_M (s,t) \colon M_s\to M_t$ is defined by
\begin{equation}
\phi_M (s,t)
=\left\{
\begin{array}{cc}
\id_{M_s}, & s=t  \\
M_{\alpha_{t-1,t}}\circ \cdots \circ M_{\alpha_{s,s+1}}, & \text{otherwise}
\end{array}.
\right.
\end{equation}
By definition, we have $\ph_M(s,t)=\ph_M (t-1,t) \circ \cdots \circ \ph_M (s,s+1)$. \\
(2) 
The $\de$-shift $M(\de)$ of $M$ is defined by  
\begin{equation}
(M(\de))_x= \left\{ \begin{array}{ll}
M_{x+\de}, & 1\leq x+\de \leq n \\
0, & \text{otherwise}
\end{array}
\right.
\end{equation}
and
\begin{equation}
(M(\de))_{\alpha_{x,x+1}}= \left\{ \begin{array}{ll}
M_{\alpha_{x+\de,x+1+\de}}, & 1\leq x+\de \leq x+1+\de \leq n \\
0, & \text{otherwise}
\end{array}
\right.
\end{equation}
for each vertex $x$ of $A_n$.
For a morphism $f \colon M\to N$ in $\rep_{\Bbbk} A_n$, 
the $\de$-shift $f(\de)$ of $f$ is defined by 
\begin{equation}
(f(\de))_x= \left\{ \begin{array}{ll}
f_{x+\de}, & 1\leq x+\de \leq n \\
0, & \text{otherwise}
\end{array}
\right.
\end{equation}
for each vertex $x$ of $A_n$.
This defines the \emph{$\de$-shift functor} $(\de) \colon \rep_{\Bbbk} A_n \to \rep_{\Bbbk} A_n$. It should be noted that the $\delta$-shift functor can only be defined in the equioriented setting.\\
(3)
Assume that $\delta$ is non-negative.
The \emph{transition morphism} $\ph_M^\de \colon M \to M (\de)$ in $\rep_{\Bbbk} A_n$ is defined by $(\ph_M^\de)_x=\ph_M (x, x+\de)$ for each vertex $x$ of $A_n$.
For any morphism $f \colon M \to N$, 
we have the following commutative diagram:
\begin{equation}
\xymatrix{
M \ar[r]^{\ph_M^\de} \ar[d]_f & M(\de)\ \ar[d]^{f(\de)} \\
N \ar[r]_{\ph_N^\de} & N(\de).
}
\end{equation}
This defines a natural transformation $\ph^{\de} \colon \id \to (\de)$ from the identity functor $\id$ to the $\de$-shift functor $(\de)$. 
\\
(4)
A persistence module $M$ is \emph{$\de$-trivial} if the transition morphism $\ph_M^\de \colon M \to M (\de)$ is zero.
\end{definition}

In our setting, the functor $(\de)$ is not an equivalence but an exact functor.
Indeed, let $M,N,L$ be persistence modules.
A sequence 
\begin{equation}0 \to M \to N \to L \to 0\end{equation}
is exact if and only if 
the sequence 
\begin{equation}0 \to M_x \to N_x \to L_x \to 0\end{equation} 
is exact for each vertex $x$ of $A_n$.
This means that the sequence 
\begin{equation}0 \to M(\de) \to N(\de) \to L(\de) \to 0\end{equation} 
is exact. 

\subsection{Interleaving distance} \label{subsec:intl}

In this paper, a distance on a set $X$ means an extended pseudometric.  
Specifically, it is a function $d \colon X \times X \to \bbR_{\geq 0} \cup \{\infty \}$ such that, for every $x,y,z\in X$,
\begin{enumerate}
\item $d(x,x)=0$,
\item $d(x,y)=d(y,x)$, \text{and}
\item $d(x,z) \leq d(x,y) + d(y,z)$ if $d(x,y), d(y,z)<\infty$.
\end{enumerate}

Let us recall the interleaving distance between persistence modules.

\begin{definition} \label{dfn:interleave}
Let $\de$ be a non-negative integer.
Two persistence modules $M$ and $N$ are said to be \emph{$\de$-interleaved} 
if there exist morphisms $f \colon M \to N(\de)$ and $g \colon N \to M(\de)$ such that the following diagrams commute:
\begin{equation}
\xymatrix{
M \ar[rr]^{\ph_M^{2\de}} \ar[dr]_f && M(2\de), & N \ar[rr]^{\ph_N^{2\de}} \ar[dr]_g && N(2\de).\\
& N(\de) \ar[ur]_{g(\de)} & & & M(\de) \ar[ur]_{f(\de)}&
}
\end{equation}
In this case, we call the pair of $f\colon M \to N(\de)$ and $g \colon N \to M (\de)$ a \emph{$\de$-interleaving pair}. 
Moreover, we call a morphism $f \colon M \to N(\de)$ a \emph{$\de$-interleaving morphism} if there is a morphism $g \colon N \to M(\de)$ such that the pair $(f,g)$ is a $\de$-interleaving pair.

For persistence modules $M,N$,
the \emph{interleaving distance} is defined as
\begin{equation}d_I(M,N) := \inf\{ \de \in \bbZ_{\geq 0} \mid M \text{ and } N \text{ are $\de$-interleaved} \}.\end{equation}

\end{definition}

We remark that in our setting, $d_I(M,N)=0$ if and only if $M$ and $N$ are isomorphic.
Thus, the interleaving distance measures how far these modules are from being isomorphic.
We will extend this concept to the derived setting later (see Definition~\ref{dfn:dinterleave}).

\subsection{Intervals and barcodes} \label{subsec:intbar}

We recall that the category $\rep_{\Bbbk} A_n (a)$ of zigzag persistence modules is a Krull-Schmidt category,
i.e., a representation of $A_n (a)$ is isomorphic to a direct sum of indecomposable representations.
In this subsection, we discuss all indecomposable representations of $A_n(a)$. 

\begin{definition}
For $1\leq b\leq d \leq n$, the \emph{interval representation} $\bbI[b,d] \in \rep_{\Bbbk} A_n(a)$ is defined by
\begin{equation}
(\bbI [b,d])_x :=
\left\{ \begin{array}{ll}
\Bbbk, & b\leq x \leq d \\
0, & \text{otherwise}
\end{array}
\right.
\end{equation}
and 
\begin{equation}
(\bbI [b,d])_{\alpha_{x,y}}:=
\left\{ \begin{array}{ll}
\id_\Bbbk, & b\leq x < y \leq d \\
0, & \text{otherwise}
\end{array}.
\right.
\end{equation}
\end{definition}

Any interval representation is indecomposable.
The converse also holds as follows.

\begin{theorem}[\citealt{gabriel1972unzerlegbare}] \label{thm:gab}
Any indecomposable representation of $A_n(a)$ is isomorphic to an interval representation $\bbI[b,d]$ for some $1\leq b \leq d \leq n$. 
\end{theorem}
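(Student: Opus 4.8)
\emph{Strategy.} The plan is to prove the formally stronger assertion that every $M\in\rep_\Bbbk A_n(a)$ is isomorphic to a finite direct sum of interval representations; since an indecomposable direct summand of a sum of intervals is again an interval, this implies the theorem. I would argue by induction on $n$, the case $n=1$ being trivial. Before the inductive step I would make a harmless normalization: the $\Bbbk$-duality $\Hom_\Bbbk(\blank,\Bbbk)$ is a contravariant equivalence $\rep_\Bbbk A_n(a)\simeq\rep_\Bbbk A_n(a^{\op})$ which preserves indecomposability and sends $\bbI[b,d]$ to $\bbI[b,d]$, and $a^{\op}$ again runs over all orientations; so I may assume that vertex $1$ is a source, i.e.\ the quiver contains the arrow $\alpha_{1,2}\colon 1\to 2$, and hence $M$ carries a linear map $f:=M_{\alpha_{1,2}}\colon M_1\to M_2$. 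If $f$ is not injective I would choose a complement of $\ker f$ in $M_1$ and split off $\bbI[1,1]^{\oplus\dim\ker f}$ as a direct summand (the complementary subrepresentation is a summand on which $f$ is injective), reducing to the case that $f$ is injective; and if $M_1=0$ the restriction of $M$ to $\{2,\dots,n\}$ is a representation of an $A_{n-1}$-type quiver, to which the induction applies directly.

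\emph{Inductive step.} Assume $f\colon M_1\hookrightarrow M_2$ is injective and $M_1\neq 0$. Restricting $M$ to the full subquiver on $\{2,\dots,n\}$ gives a representation $M'$ of an $A_{n-1}$-type quiver, so by the induction hypothesis $M'\cong\bigoplus_j\bbI[b_j,d_j]$ with $2\le b_j\le d_j\le n$. Writing $J=\{j\mid b_j=2\}$, the space $M_2$ is the direct sum of the lines $\Bbbk e_j$, $j\in J$, where $e_j$ spans the degree-$2$ component of the $j$-th summand. The key observation is that replacing $f$ by $\psi_2\circ f$ for an automorphism $\psi$ of $M'$ changes neither $M$ (up to isomorphism) nor the conclusion, so it suffices to produce $\psi\in\operatorname{Aut}(M')$ with $\psi_2\big(f(M_1)\big)$ equal to a coordinate subspace $\bigoplus_{j\in S}\Bbbk e_j$. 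Once this is done, choosing a basis of $M_1$ carried by $f$ onto $\{e_j\mid j\in S\}$ exhibits $M$ as the direct sum, over $j\in S$, of copies of $\bbI[1,d_j]$ (the summand $\bbI[2,d_j]$ of $M'$ glued to the new line at vertex $1$) and, over $j\notin S$, of the untouched summands $\bbI[b_j,d_j]$; this is the desired interval decomposition, completing the induction.

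\emph{The main obstacle.} Everything above is bookkeeping; the real content is producing the automorphism $\psi$, and this is the only place where the orientation $a$ genuinely enters. The plan is to identify the image of the restriction homomorphism $\operatorname{Aut}(M')\to GL(M_2)$. I would use the (standard) fact that between interval representations of an $A_m$-type quiver every $\Hom$-space is at most one-dimensional, with a nonzero map being "the identity on the overlap, zero outside"; since vertex $2$ is an \emph{endpoint} of the subquiver on $\{2,\dots,n\}$, a nonzero map $\bbI[2,d_k]\to\bbI[2,d_j]$ is automatically nonzero at vertex $2$, and whether such a map exists depends only on the orientation of the single arrow between $\min(d_j,d_k)$ and $\min(d_j,d_k)+1$. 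A short case check then shows that the relation "there is a nonzero map $\bbI[2,d_k]\to\bbI[2,d_j]$" is a total preorder on $J$, and that $\operatorname{Aut}(M')\to GL(M_2)$ is surjective onto exactly the parabolic subgroup of block upper-triangular matrices attached to the corresponding flag; and every orbit of a parabolic subgroup on a Grassmannian contains a coordinate subspace (it contains the torus-fixed point of the relevant Schubert cell). Verifying this $\Hom$-computation together with the parabolicity is the part I expect to be most delicate. As an alternative route that sidesteps the case analysis, one can instead run Bernstein--Gelfand--Ponomarev reflection functors at sinks and sources to reduce an arbitrary orientation of $A_n$ to the equioriented one, where the decomposition into intervals is the structure theorem over the path algebra; the reflections act on dimension vectors by the simple reflections of the $A_n$ root system, whose positive roots are precisely the interval dimension vectors $\sum_{i=b}^d e_i$.
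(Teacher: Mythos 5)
The paper offers no proof of this statement: it is quoted verbatim from Gabriel (1972) and used as a black box, so there is nothing internal to compare against. Your argument is a correct, self-contained elementary proof (the classical ``peel off an end vertex'' induction), and the two points you flag as delicate do go through. For the $\Hom$-computation: a nonzero morphism $\bbI[2,d_k]\to\bbI[2,d_j]$ on the subquiver over $\{2,\dots,n\}$ is a nonzero scalar times the identity on the overlap $[2,\min(d_j,d_k)]$, hence is indeed nonzero at the endpoint vertex $2$; when $d_j\neq d_k$ it exists in exactly one of the two directions, decided by the orientation of the arrow at $\min(d_j,d_k)$, and when $d_j=d_k$ in both. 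Totality follows, and transitivity is immediate because the composite of two maps that are nonzero at vertex $2$ is again nonzero at vertex $2$; so the relation is a total preorder whose equivalence classes are the isomorphism classes of summands, and the allowed matrix pattern is a standard parabolic $P$ containing a Borel. For surjectivity of $\operatorname{Aut}(M')\to P$ you should record the (easy) fact that an endomorphism of $M'$ whose components between non-isomorphic indecomposable summands are arbitrary and whose diagonal blocks are invertible is an automorphism, since the off-diagonal components lie in $\rad\End(M')$; invertibility of the diagonal blocks is equivalent to invertibility of the vertex-$2$ matrix because evaluation at vertex $2$ is injective on each relevant $\Hom$-space. (For the orbit argument you only need the image to \emph{contain} $P$, and the Schubert-cell fact that every $B$-orbit on a Grassmannian contains a coordinate subspace is standard.) The remaining reductions — splitting off $\bbI[1,1]^{\oplus\dim\ker f}$, the duality normalization, and reassembling $\bbI[1,d_j]$ from the straightened coordinate lines — are all sound. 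Your alternative route via Bernstein--Gelfand--Ponomarev reflections also works but is not actually shorter: it still requires an independent proof of the equioriented case (e.g.\ via $\Bbbk A_n$ being a Nakayama algebra) plus the observation that a thin indecomposable supported on an interval must have all internal structure maps nonzero and hence be an interval module.
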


Thus, for a representation $M$ of $A_n(a)$, we obtain the unique interval decomposition
\begin{equation}
M \iso \Ds_{1\leq b\leq d \leq n} \bbI[b,d]^{m(b,d)},
\end{equation}
leading to the definition of the \emph{barcode} (or the \emph{persistence diagram}) $\calB (M)$ of $M$ by 
\begin{equation}\{ (b,d,m) \mid 1\leq b\leq d \leq n, 1\leq m \leq m(b,d) \text{ such that } m(b,d)\not = 0 \}.\end{equation}

Below, we will use the same notation as in \citet{bauer2015induced}.
Recall that a multiset is a pair $(S,m)$ of a set $S$ and a map $m\colon S\to \bbZ_{>0}$.
A representation $Rep(S,m)$ of a multiset $(S,m)$ is  
\begin{equation}
Rep(S,m)=\{(s,i)\in S\times \bbZ_{>0} \mid i \leq m(s) \}.
\end{equation}
If $S$ is a totally ordered set, then a representation $Rep(S,m)$ is also a totally ordered set with order obtained by restricting the lexicographic order on $S\times \bbZ_{> 0}$ to $Rep(S,m)$.

For a persistence module $M$, the barcode $\calB (M)$ is regarded as a representation of a multiset $(\calI_M,m)$ of the set $\calI_M:=\{ (b,d) \mid m(b,d)\not =0 \}$ and the map $m\colon \calI_M \to \bbZ_{>0}$ sending $(b,d)$ to $m(b,d)$. 
For simplicity, write an element $(b,d,m)$ of $\calB (M)$ as $\langle b, d\rangle$, which is called an \emph{interval}. 
For $1\leq b \leq n $, $\calB (M)_{\langle b, \blank  \rangle}$ denotes the subset of $\calB (M)$ consisting of the intervals $\langle b,c  \rangle$ for some $b\leq c\leq n$, 
and $\calB (M)_{\langle \blank, d  \rangle}$ denotes the subset of $\calB (M)$ consisting of the intervals $\langle c, d  \rangle$ for some $1\leq c\leq d$.  
Note that $\calB (M)_{\langle b, \blank  \rangle}$ and $\calB (M)_{\langle \blank, d  \rangle}$ are regarded as totally ordered sets with the total order induced by the reverse inclusion relation on intervals.
Indeed, if $c< c'$, then $\langle b,c' \rangle <\langle b,c \rangle$ in $\calB (M)_{\langle b, \blank \rangle}$ and $\langle c,d \rangle <\langle c',d \rangle$ in $\calB (M)_{\langle \blank, d  \rangle}$.

From the perspective of AR theory, the barcode of a representation $M$ of $A_n(a)$ can be defined as a map $\AR_0 \to \bbZ$ sending an interval $\bbI [b,d]$ to its multiplicity $m(b,d)$ in the decomposition of $M$, where $\AR_0$ is the set of all interval representations.
Note that $\AR_0$ is the set of vertices of the AR quiver of $A_n(a)$ (for details, see \citealt[Section 1.5 and 3.1, Chapter 7]{schiffler2014quiver}), and in this sense, AR quivers are hidden behind barcodes.
The AR quivers are important tools in representation theory of quivers.
Indeed, under a certain assumption, the AR quiver can recover the category of representations. 

\begin{example} \label{ex:ARquiver}
The AR quiver $\AR (A_3)$ of $A_3$ is
\begin{equation}
\AR (A_3)=
    \begin{tikzcd}[graphstyle,every matrix/.append style={name=m}]
      && \bbI [1,3] \arrow[dr] \\
      & \bbI [2,3] \arrow[ur] \arrow[dr] & 
      & \bbI [1,2] \arrow[dr]  \\
      \bbI [3,3] \arrow[ur] &   &\bbI [2,2] \arrow[ur] &  & \bbI [1,1] 
    \end{tikzcd},
\end{equation}
while
the AR quiver $\AR (A_3(z_1))$ of $A_3(z_1):1\leftarrow 2 \to 3$  is 
\begin{equation}
\AR (A_3(z_1))=
    \begin{tikzcd}[graphstyle,every matrix/.append style={name=m}]
     \bbI [1,1] \arrow[dr] &  & \bbI [2,3] \arrow[dr] & \\
      & \bbI [1,3] \arrow[ur] \arrow[dr] &  & \bbI [2,2]  \\
     \bbI [3,3] \arrow[ur]&    &\bbI [1,2] \arrow[ur] &    
    \end{tikzcd}.
\end{equation}
\end{example}

\subsection{Matching and the bottleneck distance} \label{subsec:bottle}
A \emph{matching} from a set $S$ to a set $T$ (written as $\si \colon S \nrightarrow T$) is a bijection $\si \colon S^{\prime} \to T^{\prime} $ for some subset $S^{\prime}$ of $S$ and some subset $T^{\prime}$ of $T$.
For such a matching $\sigma$, we write $S^{\prime}$ as $\coim\si$ and $T^{\prime}$ as $\im \si$.

For totally ordered sets, a matching can be defined canonically as follows:
let $S=\{S_i \mid i = 1,\cdots, s \}$ and $T=\{T_i \mid i = 1,\cdots, t \}$ be finite totally ordered sets such that for $a\leq b$, $S_a \leq S_b$ and $T_a \leq T_b$.
Then a \emph{canonical matching} $\si \colon S \nrightarrow T$ is a matching $\si$ given by $\si (S_i)=T_i$ for $i=1,\cdots, \min \{ s,t \}$.
In this case, either $\im \si = S$ or $\coim \si = T$ is satisfied.

We next define a $\de$-matching between barcodes.

\begin{definition} \label{dfn:bottleneck}
Let $\de$ be a non-negative integer.
For a barcode $\calB$,
let $\calB_\de$ be the subset of $\calB$ consisting of intervals $\langle b,d\rangle$ such that $d-b\geq \de$. 
A \emph{$\de$-matching} between barcodes $\calB$ and $\calB^{\prime}$ is defined by a matching $\si \colon \calB \nrightarrow \calB^{\prime}$ such that
\begin{equation}
\calB_{2\de} \subseteq \coim \si, \quad  \calB^{\prime}_{2\de} \subseteq \im \si  
\end{equation}
and for all $\si \langle b, d \rangle = \langle b^{\prime}, d^{\prime} \rangle$,  
\begin{equation} \label{eq:corrint}
b^{\prime}-\de \leq b \leq d \leq d^{\prime}+\de, \quad b-\de \leq b^{\prime} \leq d^{\prime} \leq d+\de.
\end{equation}
Two barcodes $\calB$ and $\calB^{\prime}$ are said to be \emph{$\de$-matched} if there is a $\de$-matching between $\calB$ and $\calB^{\prime}$.
The \emph{bottleneck distance} between $\calB$ and $\calB'$ is defined as 
\begin{equation}
d_B (\calB,\calB^{\prime}):= \inf \{ \de \in \bbZ_{\geq 0} \mid \text{$\calB$ and $\calB^{\prime}$ are $\de$-matched} \}.
\end{equation}
\end{definition}

Note that equation~\eqref{eq:corrint} implies that the interval representations associated with $\langle b,d \rangle$, $\langle b',d' \rangle$ are $\de$-interleaved.

\subsection{Algebraic stability theorem} \label{subsec:AST}

In this subsection, we will explain the proof of an AST for $\rep_{\Bbbk} A_n$ following \citet{bauer2015induced}.
Their strategy utilizes an IMT.

\begin{definition}
Let $f \colon M \to N$ be a morphism in $\rep_{\Bbbk} A_n$.
Then the \emph{induced matching} $\calB(f) \colon \calB(M) \to \calB(N)$ is defined as follows:
\begin{enumerate}
\item When $f$ is injective, $\calB(f)$ is defined via the family of canonical matchings from $\calB(M)_{\langle \blank,d  \rangle}$ to $\calB(N)_{\langle \blank,d  \rangle}$.
\item When $f$ is surjective, $\calB(f)$ is defined via the family of canonical matchings from $\calB(M)_{\langle b, \blank \rangle}$ to $\calB(N)_{\langle b, \blank  \rangle}$.
\item Any morphism $f$ can be decomposed into the surjective morphism $\pi \colon M \to \im f$ and the injective morphism $\mu \colon \im f \to N$. Then $\calB(f):= \calB(\mu) \circ \calB(\pi)$ by (1) and (2). 
\end{enumerate}
\end{definition}

This matching is what yields the IMT (see \citealt[Theorem 4.2]{bauer2015induced}).
To state the IMT in our setting, we extend representations $M$ in $\rep_{\Bbbk} A_n$ to those in $\rep_{\Bbbk} A_\ell$ for $\ell \geq n$ as
\begin{equation}
0 \to \cdots \to 0 \to M_1 \to \cdots \to M_n \to 0 \to \cdots \to 0 \in \rep_{\Bbbk} A_{\ell}.
\end{equation}
Moreover, for a given representation $M \in \rep_{\Bbbk} A_n$ and non-negative integer $\de$, 
the map $r_M^{\de} \colon \calB(M(\de)) \to \calB(M)$ is defined by $r_M^{\de} \langle b,d \rangle := \langle b+\de,d+\de \rangle$.
In general, the map $r_M^{\de}$ is not bijective. 
However, we can take an integer $\ell\geq n$ large enough such that $M$ and $(M(\de))(-\de)$ are isomorphic as representations of $A_{\ell}$. 
In this case, the map $r_M^{\de}$ is bijective.
Then, the IMT is stated as follows:
\begin{theorem}[IMT] \label{thm:IMT}
Let $f \colon M \to N$ be a morphism in $\rep_{\Bbbk} A_n$. 
Assume that $\Ker f$ and $\Cok f$ are $2\de$-trivial.
Moreover, taking an integer $\ell\geq n$ large enough such that $r_M^\de$ is bijective, one regards $M,N$ as representations of $A_{\ell}$.
Then 
$\calB(f)\circ r_M ^\de$ is a $\delta$-matching $\calB(M(\de)) \nrightarrow \calB(N)$.
\end{theorem}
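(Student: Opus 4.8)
The plan is to reduce the statement to the two special cases it was built from: the case of an injective morphism and the case of a surjective morphism, since the general induced matching $\calB(f)$ is by definition the composite $\calB(\mu)\circ\calB(\pi)$ through the image. First I would set up the factorization $f = \mu\circ\pi$ with $\pi\colon M\twoheadrightarrow \im f$ and $\mu\colon \im f \hookrightarrow N$, and observe that $\Ker\pi = \Ker f$ and $\Cok\mu = \Cok f$ are both $2\de$-trivial by hypothesis (note $\Cok\pi = 0$ and $\Ker\mu = 0$, so those cases are trivially $2\de$-trivial). Working over $A_\ell$ for $\ell$ large ensures $r_M^\de$ is a bijection $\calB(M(\de))\to\calB(M)$, so it suffices to show that $\calB(f)\colon \calB(M)\nrightarrow\calB(N)$ is a ``$\de$-matching up to the shift $r_M^\de$'', i.e. that after precomposing with $r_M^\de$ the support and coimage/image conditions of Definition~\ref{dfn:bottleneck} hold.

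The core of the argument is the analysis of the two building blocks. For the \emph{surjective} case $\pi$, the canonical matching on each $\calB(M)_{\langle b,\blank\rangle}\to\calB(N)_{\langle b,\blank\rangle}$ pairs off intervals with the same birth $b$; I would show that when $\Ker\pi$ is $2\de$-trivial, any interval of $\calB(M)$ not in the image of the matching, or any pair $\langle b,d\rangle\mapsto\langle b,d'\rangle$, satisfies $d' \le d$ and $d - d' \le 2\de$ (the birth is literally preserved, $b=b'$), which gives the estimates in \eqref{eq:corrint} after the $\de$-shift, and that every interval of $\calB(M)$ with length $\ge 2\de$ is matched. The key input is that a short exact sequence $0\to K\to M\to \im f\to 0$ with $K$ being $2\de$-trivial forces the matching on each fixed-birth chain to be length-nonincreasing with controlled defect; this is essentially the content of the box-arguments in Bauer--Lesnick, and I would cite or reprove the relevant counting lemma comparing barcodes along a surjection with $2\de$-trivial kernel. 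Dually, for the \emph{injective} case $\mu$, using $\Cok\mu$ being $2\de$-trivial and the canonical matchings on the fixed-death chains $\calB(\blank)_{\langle\blank,d\rangle}$, I get that deaths are preserved and births move by at most $2\de$, again yielding \eqref{eq:corrint}.

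Then I would compose: $\calB(f) = \calB(\mu)\circ\calB(\pi)$, and the two sets of one-sided inequalities combine to give the full symmetric pair of inequalities in \eqref{eq:corrint} for $\calB(f)$, while the support conditions $\calB(M)_{4\de}\subseteq\coim$ and $\calB(N)_{4\de}\subseteq\im$ follow from tracking which intervals survive each of the two matchings (an interval long enough survives both). Finally, composing with $r_M^\de$ translates everything into the statement that $\calB(f)\circ r_M^\de\colon \calB(M(\de))\nrightarrow\calB(N)$ is a genuine $\de$-matching in the sense of Definition~\ref{dfn:bottleneck}: the shift by $r_M^\de$ converts the ``length $\le 2\de$ defect'' bounds into the $\pm\de$ bounds of \eqref{eq:corrint} and the ``length $\ge 2\de$'' survival into the ``length $\ge 2\de$'' (equivalently $\langle b,d\rangle$ with $d-b\ge 2\de$) coimage/image conditions. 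The main obstacle is the bookkeeping in the two building-block lemmas — precisely controlling how the canonical matching on each birth-chain (resp.\ death-chain) behaves under a surjection (resp.\ injection) with $2\de$-trivial kernel (resp.\ cokernel), and verifying that the defects are exactly $2\de$ and not something worse — together with keeping the index shifts consistent when passing to $A_\ell$ so that $r_M^\de$ is honestly a bijection and the composite has the right domain.
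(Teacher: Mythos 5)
Your outline is exactly the Bauer--Lesnick induced-matching argument (factor $f$ through its image, analyze the canonical matchings on fixed-birth chains for the surjection and fixed-death chains for the injection, compose, then recenter with $r_M^\de$), and this is precisely what the paper itself relies on: it states Theorem~\ref{thm:IMT} as an adaptation of \citet[Theorem 4.2]{bauer2015induced} and gives no independent proof, so deferring the counting lemma for a surjection with $2\de$-trivial kernel to that source is consistent with the paper. One correction to your bookkeeping: the support conditions you need (and that actually hold) are at threshold $2\de$, not $4\de$. Definition~\ref{dfn:bottleneck} requires $\calB(M(\de))_{2\de}\subseteq\coim\si$ and $\calB(N)_{2\de}\subseteq\im\si$, and since $r_M^\de$ preserves lengths $d-b$, this amounts to $\calB(M)_{2\de}\subseteq\coim\calB(f)$ and $\calB(N)_{2\de}\subseteq\im\calB(f)$; these follow without any loss in the composition because $\im\calB(\pi)=\calB(\im f)$ (every interval of a quotient is matched) and $\coim\calB(\mu)=\calB(\im f)$ (every interval of a submodule is matched), so an interval of $\calB(M)_{2\de}$ only has to survive the $\pi$-side matching and an interval of $\calB(N)_{2\de}$ only the $\mu$-side one --- your intermediate ``$4\de$'' claim is both unnecessary and too weak for the conclusion, though your final sentence states the right threshold.
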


Let $f \colon M \to N(\de)$ be a $\de$-interleaving morphism.
It is easily seen that $\Ker f$ and $\Cok f$ are $2\de$-trivial.
Thus, Theorem~\ref{thm:IMT} induces the following theorem (see \citealt[Theorem 4.5]{bauer2015induced}):

\begin{theorem}[AST] \label{thm:AST}
Let $M,N$ be persistence modules in $\rep_{\Bbbk} A_n$.
Then 
\begin{equation}
d_B (\calB(M),\calB(N)) \leq d_I (M,N).
\end{equation}
\end{theorem}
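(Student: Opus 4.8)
The plan is to derive the AST directly from the IMT (Theorem~\ref{thm:IMT}). The key observation, already noted right before the statement, is that a $\de$-interleaving morphism $f\colon M\to N(\de)$ has $2\de$-trivial kernel and cokernel. I would first make this explicit: if $(f,g)$ is a $\de$-interleaving pair, then $g(\de)\circ f=\ph_M^{2\de}$, so any element of $\Ker f$ is killed by $\ph_M^{2\de}$, meaning $\Ker f$ is $2\de$-trivial; dually, using $f(\de)\circ g=\ph_N^{2\de}$ together with the fact that $\ph_N^{2\de}$ factors through $\Cok f$ composed with $g(\de)$ after shifting appropriately, one sees $\Cok f$ is $2\de$-trivial. (Here one should be mildly careful about the truncation/extension of representations to $\rep_\Bbbk A_\ell$ for large $\ell$, but this is exactly the device set up before the theorem.)

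Next, I would invoke Theorem~\ref{thm:IMT}: choosing $\ell\geq n$ large enough that $r_M^\de\colon\calB(M(\de))\to\calB(M)$ is bijective and viewing $M,N$ as representations of $A_\ell$, the composite $\calB(f)\circ r_M^\de$ is a $\de$-matching $\calB(M(\de))\nrightarrow\calB(N)$. Since $r_M^\de$ is a bijection between $\calB(M(\de))$ and $\calB(M)$ that is itself compatible with the $\de$-shift bookkeeping, composing with its inverse yields a $\de$-matching $\calB(M)\nrightarrow\calB(N)$. By Definition~\ref{dfn:bottleneck}, the existence of a $\de$-matching between $\calB(M)$ and $\calB(N)$ gives $d_B(\calB(M),\calB(N))\leq\de$.

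Finally, I would take the infimum over all $\de$ for which $M$ and $N$ are $\de$-interleaved. For every such $\de$ there is a $\de$-interleaving morphism, hence by the above $d_B(\calB(M),\calB(N))\leq\de$; taking the infimum over this set gives $d_B(\calB(M),\calB(N))\leq\inf\{\de\mid M,N\text{ are }\de\text{-interleaved}\}=d_I(M,N)$, which is the claim. One should also dispatch the edge case where $M$ and $N$ are not $\de$-interleaved for any finite $\de$, in which case $d_I(M,N)=\infty$ and the inequality is trivial.

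The main obstacle is not any deep argument — it is the careful handling of the passage to $\rep_\Bbbk A_\ell$ and the verification that $r_M^\de$ can be made bijective simultaneously with the matching conditions, i.e. keeping the shift/truncation conventions consistent so that "$\calB(f)\circ r_M^\de$ is a $\de$-matching" translates cleanly into "$\calB(M)$ and $\calB(N)$ are $\de$-matched" in the sense of Definition~\ref{dfn:bottleneck}. Since Theorem~\ref{thm:IMT} is assumed, this step is bookkeeping rather than mathematics, and the remaining work (the $2\de$-triviality of $\Ker f$ and $\Cok f$, and the infimum argument) is routine.
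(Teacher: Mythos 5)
Your proof is correct and follows essentially the same route as the paper: apply the IMT to a $\de$-interleaving morphism $f\colon M\to N(\de)$ (whose kernel and cokernel are $2\de$-trivial), pass to $A_\ell$ with $\ell$ large enough that the maps $r^{\de}$ are bijective, conjugate the resulting matching by these bijections, and take the infimum over $\de$. The only slip is notational: since $f$ lands in $N(\de)$, the IMT gives a $\de$-matching into $\calB(N(\de))$, so one must also compose with $r_N^{\de}$ on the target side (exactly as the paper does) to obtain a $\de$-matching $\calB(M)\nrightarrow\calB(N)$.
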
 

\begin{proof}
Let $f \colon M \to N(\de)$ be a $\de$-interleaving morphism in $\rep_{\Bbbk} A_n$ and $\ell\geq n$ an integer large enough such that $r_M^\de$ and $r_N^\de$ are bijective.  
Then $M$ and $N$ are regarded as representations of $A_\ell$.
Since $\Ker f$ and $\Cok f$ are $2\de$-trivial, 
\begin{equation}
r_N^{\de} \circ \calB(f) = r_N^{\de} \circ (\calB(f) \circ r_M^{\de}) \circ (r_M^{\de})^{-1} \colon \calB(M) \xrightarrow{\sim} \calB(M(\de)) \nrightarrow \calB(N(\de)) \xrightarrow{\sim} \calB(N)
\end{equation}
is a $\de$-matching by Theorem~\ref{thm:IMT}, as desired.
\end{proof}

\section{Derived categories} \label{sec:dercat}

\subsection{Definition and basic properties}  \label{subsec:basic}
Let $\calA$ be an abelian category.
We start this section with the definition of its derived category (see \citealt[Chapter I, 3]{happel1988triangulated}).

\begin{definition} \label{dfn:derived}
(1) A \emph{cochain complex $X^{\bullet}$ over $\calA$} is a family $X^{\bullet}=(X^i,d_X^i )_{i\in \bbZ}$ of objects $X^i$ of $\calA$ and morphisms $d_X^i \colon X^i \to X^{i+1}$ in $\calA$ satisfying $d^{i+1}_X \circ d^i_X=0$.
A cochain complex $X^{\bullet}$ is said to be \emph{bounded} if $X^i=0$ for $|i|\gg 0$.
If $X^i=0$ for $i\not=l$, then it is called a \emph{stalk complex concentrated at the $l$-th term}.
For each cochain complex $X^{\bullet}$ and each $i\in\bbZ$, we have the \emph{$i$-th cohomology} $H^i (X^{\bullet}) := \Ker d_X^i / \im d_X^{i-1}$.

Let $X^{\bullet},Y^{\bullet}$ be cochain complexes over $\calA$.
Then a \emph{cochain map} $f^{\bullet}\colon X^{\bullet}\to Y^{\bullet}$ is a family $f^{\bullet}
=(f^i)_{i\in\bbZ}$ of morphisms $f^i \colon X^i \to Y^i$ in $\calA$ satisfying $f^{i+1} \circ d_X^i =d_Y^i \circ f^i$.

We use $\Cb (\calA)$ to denote the category of bounded cochain complexes and cochain maps over $\calA$.
Then the $l$-translation functor $[l]\colon \Cb(\calA) \to \Cb(\calA)$ is defined by 
$X^{\bullet}[l]:=((X[l])^i, d_{X[l]}^i)_{i\in \bbZ} $ with $(X[l])^i=X^{i+l},$ $d_{X[l]}^i=(-1)^l d_X^{i+l}$ and $(f^{\bullet}[l])^i:= f^{i+l}$ for a cochain complex $X^{\bullet}=(X^i,d_X^i)_{i\in \bbZ}$ and a cochain map $f^{\bullet}$. 
The $0$-translation functor $[0]$ is exactly the identity functor. 

(2) A cochain map $f^{\bullet}$ naturally induces a morphism $H^i (f^{\bullet})\colon H^i (X^{\bullet}) \to H^i (Y^{\bullet})$ for each $i$.
Thus we have the $i$-th cohomology functor $H^i(\blank)\colon \Cb (\calA) \to \calA$.

A cochain map $f^{\bullet}$ is called a \emph{quasi-isomorphism} if the induced morphism $H^i (f^{\bullet})$ is an isomorphism for any $i$.

(3) A cochain map $f^{\bullet} \colon X^{\bullet}\to Y^{\bullet} $  is said to be \emph{null-homotopic} if there exists a family $(h^i)_{i\in\bbZ}$ of morphisms $h^i \colon X^{i} \to Y^{i-1}$ such that  $f^i = h^{i+1} \circ d_X^i + d_Y^{i-1} \circ h^i $ for each $i$. 

Let $I$ be the ideal of $\Cb (\calA )$ consisting of null-homotopic cochain maps.
Then the \emph{bounded homotopy category} $\Kb (\calA)$ of $\calA$ is defined as the quotient category of $\Cb(\calA)$ by the ideal $I$. 
Since a cochain map $f^{\bullet}$ is null-homotopic if and only if $f^{\bullet}[l]$ is so,
we can extend the $l$-translation functor $[l]$ to the setting of the bounded homotopy category such that the following diagram commutes, where $\pi \colon \Cb (\calA) \to \Kb (\calA)$ is the canonical quotient functor:
\begin{equation}
\xymatrix{
\Cb (\calA) \ar[r]^{\pi} \ar[d]_{[l]} & \Kb(\calA) \ar[d]^{[l]} \\
\Cb (\calA) \ar[r]^{\pi} &  \Kb(\calA)
}.
\end{equation}

It is well-known that the homotopy category $\Kb (\calA)$ with the $1$-translation functor $[1]$ forms a triangulated category (see \citealt{happel1988triangulated}).

Moreover, if a cochain map $f^{\bullet}$ is null-homotopic, then $H^i (f^{\bullet})=0$.
Thus, we obtain the $i$-th cohomology functor $H^i(\blank)\colon \Kb (\calA) \to \calA$ such that the following diagram commutes:
\begin{equation}
\xymatrix{
\Cb (\calA) \ar[r]^{\pi} \ar[d]_{H^i} & \Kb(\calA) \ar[dl]^{H^i} \\
\calA
}.
\end{equation}
A \emph{quasi-isomorphism} in $\Kb (\calA)$ is a morphism $f^{\bullet}$ such that $H^i(f^{\bullet})$ is an isomorphism for any $i$.

(4) The \emph{bounded derived category} $\Db (\calA)$ of $\calA$ is the triangulated category given by the Verdier localization of the bounded  homotopy category $\Kb (\calA)$ with respect to the collection of all quasi-isomorphisms. 
Thus, a quasi-isomorphism in $\Kb (\calA)$ defines an isomorphism in $\Db(\calA)$.  
The construction is analogous to that of the localization of a ring.
Indeed, a morphism from $X^{\bullet}$ to $Y^{\bullet}$ in the derived category is represented by a pair $(f^{\bullet},s^{\bullet})$ of a morphism $f^{\bullet}\colon X^{\bullet} \to Z^{\bullet}$ and a quasi-isomorphism $s^{\bullet}\colon Y^{\bullet} \to Z^{\bullet}$ with some cochain complex $Z^{\bullet}$.

By the universal property of the localization, 
we obtain the $i$-th cohomology functor $H^i(\blank)\colon \Db (\calA) \to \calA$ such that the following diagram commutes, where $\iota \colon \Kb (\calA) \to \Db (\calA)$ is the canonical localization functor
\begin{equation}
\xymatrix{
\Kb (\calA) \ar[r]^{\iota} \ar[d]_{H^i} & \Db(\calA) \ar[dl]^{H^i} \\
\calA
}.
\end{equation}
\end{definition}

The abelian category $\calA$ can be regarded as a full subcategory of its bounded derived category $\Db(\calA)$.
Indeed, we have the fully faithful functor $\xi$ from $\calA$ to $\Db(\calA)$ sending $X$ to the stalk complex 
\begin{equation}
\cdots \to 0 \to X \to 0 \to \cdots 
\end{equation}
concentrated at the $0$-th term.
We denote this stalk complex as $X[0]$, using the $0$-translation functor $[0]$.
The stalk complex concentrated at the $l$-th term is written as $X[-l]$.

We use $\proj \calA$ to denote the full subcategory of $\calA$ consisting of projective objects.
An abelian category $\calA$ is said to \emph{have enough projectives} 
if for each object $M \in \calA$, there exists an epimorphism $P \to M$ with $P\in \proj \calA$.
In this case, a \emph{projective resolution} of $M$ is a cochain complex  
\begin{equation}
P^{\bullet} : \cdots \to P_1 \to P_0 
\end{equation}
over $\proj \calA$ satisfying 
\begin{equation}
H^i(P^{\bullet})\cong 
\left\{
\begin{array}{cc}
M,    & i=0 \\
0,    & \text{otherwise}
\end{array}.
\right.
\end{equation}
In other words, we have an exact sequence 
\begin{equation}
\cdots \to P_1 \to P_0 \to M \to 0.
\end{equation}
The \emph{projective dimension} of $M$ is defined as the infimum of the lengths of projective resolutions of $M$, and 
the \emph{global dimension} of $\calA$ is defined as the supremum of all projective dimensions.

Similarly, we use $\inj \calA$ to denote the full subcategory of $\calA$ consisting of injective objects, and we can dually consider the concept of having \emph{enough injectives} and the \emph{injective} and \emph{global dimension}.
Note that the global dimensions defined by projective and injective dimensions coincide if the abelian category $\calA$ has enough projectives and injectives.

The following lemma is important for understanding the bounded derived category (see \citealt[Chapter I, 3.3]{happel1988triangulated}). 
Note that the categories $\Cb (\calA)$ and $\Kb (\calA)$ can be defined if $\calA$ is an additive category, e.g., $\Kb(\proj \calA)$ and $\Kb (\inj \calA)$.

\begin{lemma} \label{lem:fingl}
Assume that $\calA$ has enough projectives (resp.\ injectives) and has a finite global dimension.
Then $\Kb(\proj \calA) (\text{resp.\ $\Kb (\inj \calA)$}) \hookrightarrow \Kb(\calA) \xrightarrow{\iota} \Db(\calA)$ is an equivalence of triangulated categories. 
\end{lemma}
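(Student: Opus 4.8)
The plan is to prove the statement for projectives; the injective case then follows by the dual argument (or by applying the projective case to $\calA^{\op}$, which has enough projectives and the same finite global dimension). Write $n$ for the global dimension of $\calA$. The composite functor $F\colon \Kb(\proj\calA)\hookrightarrow\Kb(\calA)\xrightarrow{\iota}\Db(\calA)$ is a triangulated functor between triangulated categories, so to show it is an equivalence it suffices to show it is fully faithful and essentially surjective. I would prove essential surjectivity first, since it is the part that genuinely uses the finite global dimension, and then deduce full faithfulness.

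\emph{Essential surjectivity.} First I would show that every object of $\Db(\calA)$ is isomorphic to a bounded complex of projectives. Given $X^\bullet\in\Db(\calA)$, I replace it by a quasi-isomorphic complex built from projectives using the "enough projectives" hypothesis: one constructs, termwise and working downward, a complex $P^\bullet$ of projectives together with a quasi-isomorphism $P^\bullet\to X^\bullet$; this is the standard projective-resolution-of-a-complex construction (a Cartan--Eilenberg-type resolution, or an inductive termwise argument using the horseshe\-type lifting). This $P^\bullet$ is bounded above but not necessarily bounded below. The key point is then a \emph{smart truncation} argument: because $\calA$ has global dimension $n$, the kernel of the differential $P^{-m}\to P^{-m+1}$ at a sufficiently negative spot (below the cohomological support of $X^\bullet$ by more than $n$) has projective dimension $0$, i.e.\ is itself projective, as one sees from the exact sequence exhibiting it as an $n$-th syzygy of the relevant cohomology objects together with the fact that syzygies stabilize to projectives once the global dimension is exceeded. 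Replacing the tail of $P^\bullet$ at that spot by this single projective kernel yields a \emph{bounded} complex of projectives still quasi-isomorphic to $X^\bullet$. This is the object of $\Kb(\proj\calA)$ mapping to $X^\bullet$.

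\emph{Full faithfulness.} For $P^\bullet, Q^\bullet\in\Kb(\proj\calA)$ I must show
\begin{equation}
\Hom_{\Kb(\calA)}(P^\bullet,Q^\bullet)\xrightarrow{\ \sim\ }\Hom_{\Db(\calA)}(P^\bullet,Q^\bullet).
\end{equation}
This is the standard fact that a bounded-above complex of projectives is "homotopy-projective": any quasi-isomorphism $s^\bullet\colon Z^\bullet\to Q^\bullet$ admits, up to homotopy, a lift against it, so that every roof $P^\bullet\xrightarrow{f}Z^\bullet\xleftarrow{s}Q^\bullet$ representing a morphism in $\Db(\calA)$ is equivalent to an honest homotopy class of cochain maps, and a cochain map $P^\bullet\to Q^\bullet$ that becomes zero in $\Db(\calA)$ is already null-homotopic. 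The lifting is proved by the usual induction on the degree, using projectivity of the $P^i$ to lift against the epimorphisms one extracts from the mapping cone of $s^\bullet$, which is acyclic; boundedness of $P^\bullet$ makes the induction terminate.

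\emph{Main obstacle.} The routine homological algebra (resolving a complex, the homotopy-lifting for full faithfulness) is standard; the step that really consumes the hypothesis, and that I expect to require the most care, is the truncation that turns the bounded-above projective resolution into a \emph{bounded} one — specifically, verifying cleanly that once one descends more than $\mathrm{gl.dim}\,\calA$ steps past the cohomological support, the relevant cocycle is projective. One must be careful that "finite global dimension" is being used in the form "every $n$-th syzygy of every object is projective", and assemble the short exact sequences (or the spectral sequence of the Cartan--Eilenberg resolution) so that this applies to the cocycle of the total complex, not merely to syzygies of individual cohomology objects.
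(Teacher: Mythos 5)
Your argument is correct and is the standard proof of this fact: the paper itself does not prove the lemma but cites \citet[Chapter I, 3.3]{happel1988triangulated}, where essentially the same two steps appear (truncating a bounded-above projective resolution once one is more than $\mathrm{gl.dim}\,\calA$ degrees below the cohomological support, so that the cocycle object is a high syzygy and hence projective; and the homotopy-lifting property of bounded-above complexes of projectives against quasi-isomorphisms for full faithfulness). You have correctly identified the truncation step as the only place where finite global dimension is used, so there is nothing to add.
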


This lemma states that an object of a derived category $\Db (\calA)$ may be assumed to be a cochain complex over $\proj \calA$ (or $\inj \calA$), and a morphism in $\Db (\calA)$ can be written as a morphism in $\Kb (\proj \calA)$ (or $\Kb(\inj \calA)$).
All representatives in a morphism in $\Db (\calA)$ from $X^{\bullet}$ to $Y^{\bullet}$ need some unknown cochain complex $Z^{\bullet}$ (see Definition~\ref{dfn:derived} (4)).  
In contrast, 
a morphism in $\Kb (\proj \calA)$ (or $\Kb(\inj \calA)$) is concretely written as the residue class of a cochain map. 
Thus, under the equivalence in Lemma~\ref{lem:fingl}, we can well understand the bounded derived category $\Db(\calA)$.

Let $\calB$ be another abelian category.
The category $\calA$ is said to be \emph{derived equivalent to $\calB$} if $\Db(\calA)$ and $\Db(\calB)$ are equivalent as triangulated categories.

Let $F \colon \calA \to \calB$ be a functor.
Such a functor yields the canonical functor $C(F)\colon \Cb (\calA) \to \Cb (\calB)$ given by $C(F)(X^{\bullet}):=(FX^i, Fd_X^i)_{i \in \bbZ}$ for each cochain complex $X^{\bullet}=(X^i,d_X^i)_{i \in \bbZ}$.
Then $C(F)$ naturally extends to the functor \linebreak $K(F)\colon \Kb (\calA) \to \Kb(\calB)$.
Moreover, we assume that $F$ is exact.
In this case, $H^i \circ K(F) \cong F \circ H^i$ canonically.
Thus, 
$K(F)$ induces a functor $D(F) \colon \Db (\calA) \to \Db (\calB)$ given by $D(F)(X^{\bullet}):=(FX^i, Fd_X^i)_{i \in \bbZ} $ for each cochain complex $X^{\bullet}=(X^i,d_X^i)_{i \in \bbZ}$.
We often write $C(F)$, $K(F)$, $D(F)$ as $F$ in their respective contents and identify $H^i\circ F$ with $F \circ H^i$.
For example, since the $\de$-shift functor $(\de)\colon \rep_{\Bbbk} A_n \to \rep_{\Bbbk} A_n$ is exact, it induces a functor 
\begin{equation}
    (\de) \colon \Db (\rep_{\Bbbk} A_n) \to \Db (\rep_{\Bbbk} A_n)
\end{equation} 
by $X^{\bullet} (\de) = (X^{i}(\de), d_{X}^{i} (\de))_{i \in \bbZ}$. 
Then $H^i \circ (\de)$ is identified with $(\de) \circ H^i$.

If $\calA$ has enough injectives and $F$ is left exact, then 
we can define the right derived functor $\RF \colon \Db (\calA) \to \Db (\calB)$ by $\RF (X^{\bullet}) := F(I^{\bullet}) $ with $X^{\bullet} \cong I^{\bullet}\in \Kb(\inj \calA)$.
A typical example of a left exact functor is the Hom functor $\Hom (X,\blank)$ with $X\in \calA$. 
Its right derived functor is denoted by $\RHom(X,\blank)$.
Note that $H^i(\RHom(X,\blank))\cong \Ext^i(X,\blank)$ for each $i$.

\subsection{Derived category of $\rep_{\Bbbk} A_n(a)$} \label{subsec:dercateAn}
We discuss some specific properties of $\Db(\rep_{\Bbbk} A_n(a))$ for an $A_n$-type quiver $A_n(a)$ with arbitrary orientation $a$.
Set $\proj A_n(a) := \proj (\rep_{\Bbbk} A_n(a))$.
Note that $\rep_{\Bbbk} A_n(a)$ has enough projectives and injectives, and has global dimension $0$ for $n=1$ and $1$ for $n>1$. 
Hence we always have a projective resolution $0\to P_1 \to P_0 \to M \to 0$ of length  at most $1$ for any representation $M\in \rep_{\Bbbk} A_n(a)$.
In particular, any subrepresentation of a projective representation is also projective in this setting.
In addition, by Lemma~\ref{lem:fingl}, we obtain an equivalence between $\Db(\rep_{\Bbbk} A_n(a))$ and $\Kb(\proj A_n(a))$.

For $\Db(\rep_{\Bbbk} A_n (a))$, we have the following strong characterization of a cochain complex by its cohomologies.

\begin{lemma} \label{lem:dec}
For any cochain complex $X^{\bullet} \in \Db (\rep_{\Bbbk} A_n (a))$, 
\begin{equation}X^{\bullet}\cong \Ds_{i \in \bbZ} H^i (X^{\bullet}) [-i]\end{equation} in $\Db(\rep_{\Bbbk} A_n (a))$.
More generally, 
\begin{equation}
    X^{\bullet} (\de)\cong 
    \Ds_{i \in \bbZ} H^i (X^{\bullet})(\de) [-i].
\end{equation}
\end{lemma}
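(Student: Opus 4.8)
The plan is to use the fact that $\rep_{\Bbbk} A_n(a)$ has global dimension at most $1$, which forces every bounded complex to be formal (quasi-isomorphic to the direct sum of its shifted cohomologies). First I would recall from Lemma~\ref{lem:fingl} that we may represent $X^{\bullet}$ by a complex of projectives, and that for a hereditary abelian category (global dimension $\leq 1$) there is a standard splitting argument. Concretely, for each $i$ consider the truncation $\tau_{\leq i} X^{\bullet}$ and the short exact sequences of complexes relating $\tau_{\leq i-1}X^{\bullet}$, $\tau_{\leq i}X^{\bullet}$, and $H^i(X^{\bullet})[-i]$; the connecting morphism of the associated triangle in $\Db(\rep_{\Bbbk} A_n(a))$ lives in $\Ext^2$ of cohomology objects, which vanishes because the global dimension is at most $1$. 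Hence each triangle splits, and by induction on the (finite) number of nonzero cohomologies we obtain $X^{\bullet}\cong \Ds_{i\in\bbZ} H^i(X^{\bullet})[-i]$.

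The main work is to make the splitting precise. Here is the cleaner route I would actually write: since $X^{\bullet}$ is isomorphic in $\Db$ to a complex $P^{\bullet}$ with all $P^i$ projective, and since any subrepresentation of a projective is projective in this setting, each cocycle $Z^i:=\Ker d_P^i$ and each coboundary $B^i:=\im d_P^{i-1}$ is projective. Then the short exact sequence $0\to B^i\to Z^i\to H^i(P^{\bullet})\to 0$ is a projective resolution of $H^i(P^{\bullet})$, and one can split $P^{\bullet}$ as a direct sum of the two-term complexes $(B^{i+1}\to P^i/Z^i)\cong(\,\cdots\,)$ together with the resolutions $(B^i\to Z^i)$ placed in degrees $i-1,i$; the former pieces are acyclic (split exact), hence zero in $\Db$, and the latter are quasi-isomorphic to $H^i(P^{\bullet})[-i]$. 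This yields the first displayed isomorphism.

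For the second statement, I would simply apply the exact functor $(\de)$. As recalled in the paragraph before Lemma~\ref{lem:dec}, $(\de)\colon\rep_{\Bbbk}A_n\to\rep_{\Bbbk}A_n$ is exact and so induces an exact (triangulated) endofunctor on $\Db(\rep_{\Bbbk}A_n)$ with $H^i\circ(\de)\cong(\de)\circ H^i$ and $(\blank)(\de)[-i]\cong(\blank)[-i](\de)$. Applying this functor to the isomorphism $X^{\bullet}\cong\Ds_{i\in\bbZ}H^i(X^{\bullet})[-i]$, commuting it past the (finite) direct sum and the shifts, gives
\begin{equation}
X^{\bullet}(\de)\cong\Bigl(\Ds_{i\in\bbZ}H^i(X^{\bullet})[-i]\Bigr)(\de)\cong\Ds_{i\in\bbZ}H^i(X^{\bullet})(\de)[-i]
\end{equation}
as required. (Note the statement is for $A_n$ rather than $A_n(a)$ in the second line, since the $\de$-shift is only defined in the equioriented case.)

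The step I expect to be the genuine obstacle is the splitting of the projective complex $P^{\bullet}$ into two-term pieces: one must be careful that the decomposition $P^i\cong Z^i\oplus(P^i/Z^i)$ can be chosen compatibly with the differential so that the global complex genuinely decomposes as a direct sum of subcomplexes, not merely termwise. This is where hereditariness (every subobject of a projective is projective, so the relevant short exact sequences split) is essential; everything else is bookkeeping.
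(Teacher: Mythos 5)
Your proof is correct and rests on the same key fact as the paper's: in the hereditary category $\rep_{\Bbbk} A_n(a)$, images and kernels of the differentials of a complex of projectives are again projective, so the complex splits into two-term projective resolutions of its cohomologies (the paper organizes this as an induction on the number of nonzero terms, peeling off $\im d_X^{t-1} \to X^t$ at the top, while you split all degrees simultaneously; your truncation-triangle/$\Ext^2=0$ variant is also fine), and the second claim follows by applying the exact functor $(\de)$ exactly as in the paper. One bookkeeping correction: in your explicit decomposition you should not list the split-acyclic pieces $(P^i/Z^i \xrightarrow{\sim} B^{i+1})$ as summands \emph{in addition to} the resolutions $(B^i \hookrightarrow Z^i)$ — the resolutions alone already exhaust every term, since $P^i \cong Z^i \oplus (P^i/Z^i)$ with $Z^i$ sitting in degree $i$ of the resolution of $H^i$ and $P^i/Z^i \cong B^{i+1}$ sitting in degree $i$ of the resolution of $H^{i+1}$.
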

\begin{proof}
We may assume that $X^{i}$ is projective for all $i \in \bbZ$, since $ \Db (\rep_{\Bbbk} A_n (a)) \simeq \Kb (\proj A_n (a))$ by Lemma~\ref{lem:fingl}.
The statement will be proved by induction on the length of its non-zero terms.
By the boundedness of $X^{\bullet}$,
let us set $s:=\min\{i\in \bbZ \mid X^i \not= 0 \}$ and $t:= \max\{i\in \bbZ \mid X^i \not= 0 \}$. 
Then the length $\ell (X^{\bullet})$ of non-zero terms of $X^{\bullet}$ is defined by $t-s+1$.
Since the global dimension of $\rep_{\Bbbk} A_n (a)$ is at most 1, $\im d_X^{t-1}$  is projective. Hence the exact sequence 
\begin{equation}
0 \to \Ker d_X^{t-1} \to X^{t-1} \to \im d_X^{t-1} \to 0
\end{equation}
is split, implying that $\Ker d_X^{t-1}$ is also projective.
Thus, we have $X^{\bullet} \cong Y^{\bullet} \ds Z^{\bullet}$, 
where the complexes $ Y^{\bullet}, Z^{\bullet}\in \Kb(\proj A_n (a))$ are given by
\begin{equation}
Y^{\bullet}= \cdots \to X^{t-2} \to \Ker d_X^{t-1} \to 0 \text{ and } Z^{\bullet} = \cdots \to 0 \to \im d_X^{t-1} \to X^t.
\end{equation}
Here, $Z^{\bullet} \cong H^{t}(X^{\bullet})[-t]$ and
\begin{equation}
    H^{i}(Y^{\bullet}) \cong \left. \begin{cases}
    H^{i}(X^{\bullet}), & \text{if } i<t \\
    0, & \text{if } i\geq t
    \end{cases}. \right. 
\end{equation}
The length $\ell (Y^{\bullet})$ is less than the length $\ell (X^{\bullet})$, 
so by induction 
\begin{equation}Y^{\bullet} \cong \Ds_{i < t} H^{i}(Y^{\bullet})[-i] \cong \Ds_{i<t} H^{i}(X^{\bullet})[-i].\end{equation} 
Therefore, we obtain $X^{\bullet} \cong  Y^{\bullet} \ds Z^{\bullet} \cong \Ds_{i\in \bbZ} H^{i}(X^{\bullet})[-i]$ in $\Db(\rep_{\Bbbk} A_n (a))$. 

The second statement follows from the first one and the fact that the functor $(\de)$ is exact.
\end{proof}

The fact that the global dimension of $\rep_{\Bbbk} A_n (a)$ is at most $1$ is essential to the proof of the foregoing lemma.
As a consequence of Lemma~\ref{lem:dec}, 
we can characterize all indecomposable objects of $\Db (\rep_{\Bbbk} A_n(a))$.

\begin{corollary} \label{cor:dec}
A cochain complex $X^{\bullet} \in \Db (\rep_{\Bbbk} A_n(a))$ is indecomposable if and only if $X^{\bullet}$ is isomorphic to a stalk complex 
\begin{equation}
\bbI [b,d] [-i] : \cdots \to 0 \to \bbI [b,d] \to 0 \to \cdots
\end{equation}
concentrated at the $i$-th term in $\Db (\rep_{\Bbbk} A_n(a))$ for some $1\leq b \leq d \leq n$ and some $i\in \bbZ$. 
Thus, any cochain complex $X^{\bullet}$ is isomorphic to 
\begin{equation}
\bigoplus\nolimits_{b\leq d,i} (\bbI [b,d] [-i])^{m(b,d,i)} ,
\end{equation}
where the non-negative integer $m(b,d,i)$ is the multiplicity of $\bbI [b,d] [-i]$.
\end{corollary}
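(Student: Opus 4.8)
The plan is to derive \pref{cor:dec} directly from \pref{lem:dec} together with \pref{thm:gab} and the Krull--Schmidt property of the relevant categories. First I would establish the ``if'' direction: a stalk complex $\bbI[b,d][-i]$ is indecomposable in $\Db(\rep_{\Bbbk} A_n(a))$ because $\bbI[b,d]$ is indecomposable in $\rep_{\Bbbk} A_n(a)$ (by \pref{thm:gab} it is an interval representation, hence indecomposable) and the fully faithful embedding $\xi\colon \rep_{\Bbbk} A_n(a)\hookrightarrow \Db(\rep_{\Bbbk} A_n(a))$ together with the translation functor $[-i]$ — which is an autoequivalence of the triangulated category — preserves indecomposability. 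Concretely, any nontrivial direct sum decomposition of $\bbI[b,d][-i]$ would, after applying $H^i(\blank)$ (which commutes with $[-i]$ up to the identification discussed after \pref{lem:fingl}) and using that $H^j(\bbI[b,d][-i])=0$ for $j\neq i$, produce a nontrivial decomposition of $\bbI[b,d]$ in $\rep_{\Bbbk} A_n(a)$, a contradiction.

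For the ``only if'' direction, I would start from an arbitrary $X^{\bullet}\in\Db(\rep_{\Bbbk} A_n(a))$ and apply \pref{lem:dec} to write $X^{\bullet}\cong\Ds_{i\in\bbZ} H^i(X^{\bullet})[-i]$. Each cohomology $H^i(X^{\bullet})$ lies in $\rep_{\Bbbk} A_n(a)$, which is Krull--Schmidt (it is $\rep_{\Bbbk}$ of a finite quiver), so by \pref{thm:gab} it decomposes as $H^i(X^{\bullet})\cong\Ds_{b\leq d}\bbI[b,d]^{m(b,d,i)}$ for uniquely determined multiplicities $m(b,d,i)\in\bbZ_{\geq 0}$. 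Applying the additive translation functor $[-i]$ and collecting over all $i$ yields $X^{\bullet}\cong\bigoplus_{b\leq d,\,i}(\bbI[b,d][-i])^{m(b,d,i)}$, which is the displayed decomposition. In particular, if $X^{\bullet}$ is indecomposable, this forced decomposition can have only one nonzero summand with multiplicity $1$, so $X^{\bullet}\cong\bbI[b,d][-i]$ for a single triple $(b,d,i)$.

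For the uniqueness of the multiplicities $m(b,d,i)$ — which is implicit in the statement and needed for the barcode to be well defined — I would invoke that $\Db(\rep_{\Bbbk} A_n(a))$ is itself a Krull--Schmidt category: its objects have finite-dimensional total cohomology, the Hom-spaces are finite-dimensional (projective resolutions have length at most $1$, so $\Ext$-computations are finite), hence idempotents split and the Krull--Schmidt--Remak--Azumaya theorem applies. Then the decomposition into indecomposables, i.e.\ into stalk complexes $\bbI[b,d][-i]$, is unique up to permutation and isomorphism. Alternatively, and more cheaply, uniqueness follows from the functoriality of $H^i$: one recovers $\bigoplus_{b\leq d}\bbI[b,d]^{m(b,d,i)}\cong H^i(X^{\bullet})$ and then uses the uniqueness of the interval decomposition in $\rep_{\Bbbk} A_n(a)$ for each fixed $i$ separately.

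The main obstacle I anticipate is not conceptual but bookkeeping: one must be careful that the identification ``$H^i\circ[-j]\cong[\,\cdot\,]\circ H^{i}$'' is used correctly, i.e.\ that $H^i(\bbI[b,d][-j])\cong\bbI[b,d]$ when $i=j$ and $0$ otherwise, and that this identification is compatible with direct sums and with isomorphisms in $\Db$. Since \pref{lem:dec} has already done the real work (reducing any complex to a sum of cohomology stalk complexes, using crucially that the global dimension is at most $1$), the corollary is essentially a translation of \pref{thm:gab} across the functors $H^i$ and $[-i]$, and the only thing to watch is that indecomposability is genuinely detected by a single cohomology degree — which the ``if'' direction above secures.
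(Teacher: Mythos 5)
Your proposal is correct and follows essentially the same route the paper intends: the corollary is stated as an immediate consequence of Lemma~\ref{lem:dec} combined with Theorem~\ref{thm:gab}, with uniqueness of the decomposition supplied by the Krull--Schmidt property of $\Db(\rep_{\Bbbk} A_n(a))$ (which the paper cites rather than proves, just as you sketch). Your explicit cohomological argument for indecomposability of the stalk complexes is a fine way to make precise what the paper leaves implicit.
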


Since $\Db (\rep_{\Bbbk} A_n(a))$ is a Krull-Schmidt category (see \citealt{chen2008algebras}), the interval decomposition in the corollary above is unique.
By using this result, we propose the notion of a `derived' barcode. 
Note that \citet{berkouk2018derived} considered a similar concept of a graded barcode.

\begin{definition} \label{dfn:dbarcode}
Let $X^{\bullet},Y^{\bullet}$ be cochain complexes in $\Db(\rep_{\Bbbk} A_n(a))$.
Then the \emph{derived barcode} $\calB^D(X^{\bullet})$ is defined as
\begin{equation}
\calB^D(X^{\bullet}):= \bigsqcup_{i \text{ with } H^i (X^{\bullet}) \not = 0  } \calB(H^i (X^{\bullet}))
\end{equation}
where $\calB(H^i (X^{\bullet}))$ is the ordinary barcode of $H^i (X^{\bullet})$ (see the paragraph following  Theorem~\ref{thm:gab}).
\end{definition}

As in Section \ref{subsec:intbar}, 
the AR quiver of $\Db (\rep_{\Bbbk} A_n(a))$ with arbitrary orientations can be  defined (see \citealt[Chapter I, 4 and 5]{happel1988triangulated}).
Similar to the case of $\rep_{\Bbbk} A_n(a)$, the derived barcode of $X^{\bullet}$ can be defined as a map $\AR_0 \to \bbZ$ sending $\bbI [b,d] [-i]$ to the multiplicity $m(b,d,i)$, where $\AR_0$ is the set of vertices of the AR quiver of $\Db (\rep_{\Bbbk} A_n(a))$.
Thus, AR quivers are hidden behind barcodes also in this setting. 
Moreover, the AR quiver of $\Db (\rep_{\Bbbk} A_n(a))$ consists of all shifted copies of the AR quiver $\AR(A_n(a))$ of $A_n(a)$.

\begin{example}
The AR quiver $\AR (\Db (\rep_{\Bbbk} A_3))$ is
\[
\AR (\Db (\rep_{\Bbbk} A_3))=
    \begin{tikzcd}[graphstyle,every matrix/.append style={name=m}]
      \cdots&\bbI [1,1] [-1] \arrow[dr] && \bbI [1,3] \arrow[dr] &&  \bbI [3,3] [1]\\
      \bbI[1,2] [-1] \arrow[ur] \arrow[dr]&& \bbI [2,3] \arrow[ur] \arrow[dr] & & \bbI [1,2] \arrow[dr] \arrow[ur]& \cdots  \\
      \cdots&\bbI [3,3] \arrow[ur] &   &\bbI [2,2] \arrow[ur] &  & \bbI [1,1]  
    \end{tikzcd}.
\]
More generally, the AR quiver $\AR (\Db (\rep_{\Bbbk} A_n))$ is as described in Figure~\ref{fig:DAR}.
\begin{figure}[h]
    \centering
    \includegraphics[width=0.9\textwidth,height=0.15\textheight]{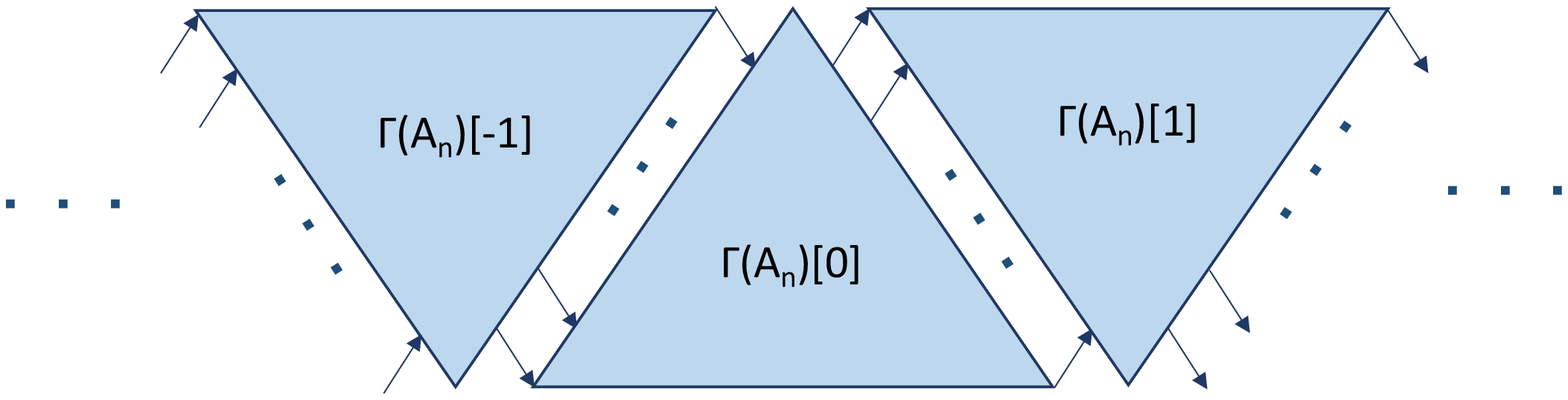}
    \caption{AR quiver of $\Db (\rep_{\Bbbk} A_n)$}
    \label{fig:DAR}
\end{figure}
\end{example}

Moreover, the AR quiver $\AR (\Db (\rep_{\Bbbk} A_3(z_1)))$, where $A_3(z_1):1\leftarrow 2 \to 3$, is
\begin{equation}
\AR (\Db (\rep_{\Bbbk} A_3(z_1)))=
    \begin{tikzcd}[graphstyle,every matrix/.append style={name=m}]
      \cdots&\bbI [1,1] \arrow[dr] && \bbI [2,3] \arrow[dr] &&  \bbI [3,3] [1]\\
      \bbI[2,2][-1] \arrow[ur] \arrow[dr]  && \bbI [1,3] \arrow[ur] \arrow[dr] & & \bbI [2,2] \arrow[dr] \arrow[ur]& \cdots  \\
      \cdots&\bbI [3,3] \arrow[ur] &   &\bbI [1,2] \arrow[ur] &  & \bbI [1,1] [1]  
    \end{tikzcd}.
\end{equation}
Similarly to Figure~\ref{fig:DAR}, the AR quiver $\AR (\Db (\rep_{\Bbbk} A_n(z_1)))$ is given by all shifted copies of the AR quiver of $A_n(z_1)$.

\subsection{Derived interleaving and bottleneck distances}
\label{subsec:deriveddist}

In this subsection, we propose distances on the derived category of persistence modules by extending the original interleaving and bottleneck distances.

Recall that the $\de$-shift functor $(\de)\colon \rep_{\Bbbk} A_n \to \rep_{\Bbbk} A_n$ induces a functor 
\begin{equation}
(\de) \colon \Db (\rep_{\Bbbk} A_n) \to \Db (\rep_{\Bbbk} A_n)
\end{equation} 
via $X^{\bullet} (\de) = (X^{i}(\de), d_{X}^{i} (\de))_{i\in\bbZ}$ 
since the functor $(\de)$ is exact.

\begin{definition} \label{dfn:dinterleave}
Let $X^{\bullet}, Y^{\bullet}$ be cochain complexes in $\Db (\rep_{\Bbbk} A_n)$ and $\de$ a non-negative integer.
Then $X^{\bullet}$ and $Y^{\bullet}$ are said to be  \emph{derived $\de$-interleaved}
if there exist morphisms $f^{\bullet}\colon X^{\bullet} \to Y^{\bullet}(\de)$ and $g^{\bullet} \colon Y^{\bullet} \to X^{\bullet} (\de)$ such that 
for each $i\in \bbZ$, 
$(H^i(f^{\bullet}), H^i(g^{\bullet}))$ is a $\de$-interleaving pair between $H^i(X^{\bullet})$ and $H^i(Y^{\bullet})$ in the sense of Definition~\ref{dfn:interleave}. 
Namely, the following diagrams commute for each $i \in \bbZ$: 
\begin{equation}
\xymatrix@C=8pt{
H^{i} (X^{\bullet}) \ar[rr]^{\ph_{H^{i}(X^{\bullet})}^{2\de} } \ar[dr]_{H^{i}(f^{\bullet})} && H^{i} (X^{\bullet}) (2\de), & H^{i} (Y^{\bullet}) \ar[rr]^{\ph_{H^{i}(Y^{\bullet})}^{2\de}} \ar[dr]_{H^{i}(g^{\bullet})} && H^{i} (Y^{\bullet})(2\de)\\
& H^{i} (Y^{\bullet}) (\de) \ar[ur]_{H^{i}(g^{\bullet})(\de)} & & & H^{i} (X^{\bullet}) (\de) \ar[ur]_{H^{i}(f^{\bullet})(\de)}&
}.
\end{equation}
In this case we also call the pair of $f^{\bullet}\colon X^{\bullet} \to Y^{\bullet}(\de)$ and $g^{\bullet}\colon Y^{\bullet} \to X^{\bullet}(\de)$ a \emph{derived $\de$-interleaving pair}. 
Moreover, we call a morphism $f^{\bullet} \colon X^{\bullet} \to Y^{\bullet}(\de)$ a \emph{derived $\de$-interleaving morphism} if there is a morphism $g^{\bullet}\colon Y^{\bullet} \to X^{\bullet}(\de)$ such that the pair $(f^{\bullet},g^{\bullet})$ is a derived $\de$-interleaving pair.

For cochain complexes $X^{\bullet},Y^{\bullet}$ in $\Db (\rep_{\Bbbk} A_n)$, the \emph{derived interleaving distance} is defined as
\begin{equation}d_I^D (X^{\bullet},Y^{\bullet}) := \inf\{ \de \in \bbZ_{\geq 0} \mid X^{\bullet} \text{ and } Y^{\bullet} \text{ are derived $\de$-interleaved} \}.\end{equation}
\end{definition}

\begin{remark} \label{rmk:dint}
Similarly to the original setting, $d_I^D(X^{\bullet},Y^{\bullet})=0$ for two cochain complexes $X^{\bullet}, Y^{\bullet}\in \Db (\rep_{\Bbbk} A_n)$ if and only if $X^{\bullet}$ and $Y^{\bullet}$ are isomorphic in $\Db(\rep_{\Bbbk} A_n)$.
Thus, the derived interleaving distance also measures how far these complexes are from being isomorphic.
\end{remark}

Note that if $X^{\bullet}$ and $Y^{\bullet}$ are derived $\de$-interleaved,
then $H^i(X^{\bullet})$ and $H^i(Y^{\bullet})$ are $\de$-interleaved for all $i$.
It follows from Lemma~\ref{lem:dec} that the converse also holds.

\begin{corollary} \label{cor:dint}
Let $X^{\bullet}$, $Y^{\bullet}$ be cochain complexes in $\Db (\rep_{\Bbbk} A_n)$.
Then $X^{\bullet}$ and $Y^{\bullet}$ are derived $\de$-interleaved if and only if
$H^i(X^{\bullet})$ and $H^i(Y^{\bullet})$ are $\de$-interleaved for all $i$.
\end{corollary}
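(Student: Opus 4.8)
The ``only if'' direction is immediate from the definitions: if $(f^{\bullet},g^{\bullet})$ is a derived $\de$-interleaving pair, then by Definition~\ref{dfn:dinterleave} the pair $(H^i(f^{\bullet}),H^i(g^{\bullet}))$ is a $\de$-interleaving pair between $H^i(X^{\bullet})$ and $H^i(Y^{\bullet})$ for every $i$, so in particular those cohomologies are $\de$-interleaved. The content is therefore the converse, and the plan is to reconstruct a derived interleaving pair out of the pointwise ones by splitting the complexes into their shifted cohomologies, using Lemma~\ref{lem:dec}.

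So suppose that for each $i\in\bbZ$ we are given a $\de$-interleaving pair $(f_i,g_i)$ between $H^i(X^{\bullet})$ and $H^i(Y^{\bullet})$, i.e.\ morphisms $f_i\colon H^i(X^{\bullet})\to H^i(Y^{\bullet})(\de)$ and $g_i\colon H^i(Y^{\bullet})\to H^i(X^{\bullet})(\de)$ in $\rep_{\Bbbk} A_n$ fitting in the two commuting triangles of Definition~\ref{dfn:interleave}. Applying the fully faithful embedding $\rep_{\Bbbk} A_n\hookrightarrow \Db(\rep_{\Bbbk} A_n)$ and then the translation functor $[-i]$ turns $f_i$ and $g_i$ into morphisms $f_i[-i]$ and $g_i[-i]$ of stalk complexes in $\Db(\rep_{\Bbbk} A_n)$. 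By Lemma~\ref{lem:dec} we have $X^{\bullet}\cong\bigoplus_i H^i(X^{\bullet})[-i]$, $Y^{\bullet}\cong\bigoplus_i H^i(Y^{\bullet})[-i]$, and likewise $Y^{\bullet}(\de)\cong\bigoplus_i H^i(Y^{\bullet})(\de)[-i]$ and $X^{\bullet}(\de)\cong\bigoplus_i H^i(X^{\bullet})(\de)[-i]$. Transporting $f^{\bullet}:=\bigoplus_i f_i[-i]$ and $g^{\bullet}:=\bigoplus_i g_i[-i]$ along these isomorphisms yields morphisms $f^{\bullet}\colon X^{\bullet}\to Y^{\bullet}(\de)$ and $g^{\bullet}\colon Y^{\bullet}\to X^{\bullet}(\de)$ in $\Db(\rep_{\Bbbk} A_n)$.

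It then remains to check that $H^j(f^{\bullet})=f_j$ and $H^j(g^{\bullet})=g_j$ for every $j$ (modulo the canonical identification $H^j\circ(\de)\cong(\de)\circ H^j$), after which the commuting triangles demanded by Definition~\ref{dfn:dinterleave} for $(H^j(f^{\bullet}),H^j(g^{\bullet}))$ are precisely the ones witnessing that $(f_j,g_j)$ is a $\de$-interleaving pair between $H^j(X^{\bullet})$ and $H^j(Y^{\bullet})$, giving the result. This identification is routine: $H^j$ is additive, $H^j\bigl(Z[-i]\bigr)$ vanishes for $i\ne j$ and equals $Z$ for $i=j$, and $H^j$ commutes with the shift functor $(\de)$ (as recalled before Lemma~\ref{lem:dec}), so $H^j$ applied to $\bigoplus_i f_i[-i]$ returns $f_j$, and similarly for $g^{\bullet}$.

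I do not expect a genuine obstacle here; the only point requiring care is bookkeeping, namely making sure that the natural isomorphisms $H^j\circ(\de)\cong(\de)\circ H^j$ and the splitting isomorphisms of Lemma~\ref{lem:dec} are used coherently, so that the transition morphisms $\ph^{2\de}$ and $\ph^{\de}(\de)$ appearing in Definition~\ref{dfn:dinterleave} restrict on each $H^j$ exactly to the transition morphisms appearing in Definition~\ref{dfn:interleave}. Since all of these identifications are natural, no compatibility problem actually arises, and the corollary is a formal consequence of Lemma~\ref{lem:dec}: in $\Db(\rep_{\Bbbk} A_n)$ every object is the direct sum of its shifted cohomologies, so both the data (the maps $f^{\bullet},g^{\bullet}$) and the conditions (the interleaving triangles) can be assembled degree by degree from cohomology.
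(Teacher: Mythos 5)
Your proposal is correct and follows essentially the same route as the paper: the ``only if'' direction is definitional, and for the converse the paper likewise invokes Lemma~\ref{lem:dec} to split $X^{\bullet}$ and $Y^{\bullet}$ into their shifted cohomologies, assembles $f^{\bullet}$ and $g^{\bullet}$ as $\bigoplus_i f_i[-i]$ and $\bigoplus_i g_i[-i]$ conjugated by those splitting isomorphisms, and observes that $H^i(f^{\bullet})=f_i$, $H^i(g^{\bullet})=g_i$. Your additional remarks on the naturality of $H^j\circ(\de)\cong(\de)\circ H^j$ just make explicit the bookkeeping the paper leaves implicit.
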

\begin{proof}
For each $i$, let $(f_i,g_i)$ be a $\de$-interleaved pair between $H^i(X^{\bullet})$ and $H^i(Y^{\bullet})$.
Then the pair of 
\begin{equation}
    (f_i[-i]) \colon \bigoplus\nolimits_{i} H^i(X^{\bullet})[-i] \to \bigoplus\nolimits_{i} H^i(Y^{\bullet})(\de)[-i]
\end{equation} 
and 
\begin{equation}
    (g_i[-i]) \colon \bigoplus\nolimits_{i} H^i(Y^{\bullet})[-i] \to \bigoplus\nolimits_{i} H^i(X^{\bullet})(\de)[-i] 
\end{equation}
is a derived $\de$-interleaved pair.
By Lemma~\ref{lem:dec}, $X^{\bullet}$ and $Y^{\bullet}$ are derived $\de$-interleaved. 
Indeed, using the isomorphisms in Lemma~\ref{lem:dec}, 
we can construct a derived $\de$-interleaving pair of morphisms 
\begin{equation}
    f^{\bullet} \colon X^{\bullet} \to \bigoplus\nolimits_{i} H^i(X^{\bullet})[-i] \xrightarrow{(f_i[-i])} \bigoplus\nolimits_{i} H^i(Y^{\bullet})(\de)[-i] \to Y^{\bullet} (\de)
\end{equation}
and
\begin{equation}
    g^{\bullet} \colon Y^{\bullet} \to \bigoplus\nolimits_{i} H^i(Y^{\bullet})[-i] \xrightarrow{(g_i[-i])} \bigoplus\nolimits_{i} H^i(X^{\bullet})(\de)[-i]  \to X^{\bullet} (\de)
\end{equation}
such that $H^i(f^{\bullet}) = f_i$ and $H^i(g^{\bullet})=g_i$ for any $i$.
\end{proof}

Finally, we propose the `derived' bottleneck distance between derived barcodes in the sense of Definition~\ref{dfn:dbarcode} in this setting.

\begin{definition} \label{dfn:dbottleneck}
Let $X^{\bullet},Y^{\bullet}$ be cochain complexes in $\Db(\rep_{\Bbbk} A_n)$.
Two derived barcodes $\calB^D (X^{\bullet})$ and $\calB^D (Y^{\bullet})$ are said to be \emph{$\de$-matched}
if $B (H^i(X^{\bullet}))$ and $B (H^i(Y^{\bullet}))$ are $\de$-matched in the sense of Definition~\ref{dfn:bottleneck} for all $i\in\bbZ$.

For derived barcodes $\calB^D(X^{\bullet}),\calB^D(Y^{\bullet})$, the \emph{derived bottleneck distance} is defined as 
\begin{equation}
d_B^D (\calB^D(X^{\bullet}),\calB^D(Y^{\bullet})) := 
\inf \{ \de \in \bbZ_{\geq 0} \mid \text{$\calB^D (X^{\bullet})$ and $\calB^D (Y^{\bullet})$ are  $\de$-matched} \}. 
\end{equation}
\end{definition}

\section{Main results} \label{sec:main}

In this section, we derive an AST for zigzag persistence modules from an AST for ordinary ones by using the derived category.
We adopt a different approach from that of \citet{botnan2018algebraic}.
In fact, we consider the distances on zigzag persistence modules naturally induced by the known interleaving and bottleneck distances on ordinary ones using derived categories.
This enables us to obtain an AST for a wider class compared to that of Botnan and Lesnick.

\subsection{AST for derived categories} \label{subsec:DAST}

In this subsection, we first prove an AST for derived categories of ordinary persistence modules.

\begin{theorem}[AST for derived categories] \label{thm:DAST}
Let $X^{\bullet},Y^{\bullet}$ be cochain complexes in $\Db (\rep_{\Bbbk} A_n)$.
Then
\begin{equation}d_B^D (\calB^D (X^{\bullet}),\calB^D (Y^{\bullet})) \leq d_I^D (X^{\bullet},Y^{\bullet}).\end{equation}  
\end{theorem}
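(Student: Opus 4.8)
The plan is to reduce the derived statement to the classical AST (Theorem~\ref{thm:AST}) degree by degree, using the decomposition result of Lemma~\ref{lem:dec} together with the characterization of derived $\de$-interleavings in Corollary~\ref{cor:dint} and the definitions of the derived barcode and derived bottleneck distance. The key observation is that both sides of the desired inequality are, by construction, built out of the cohomology objects $H^i(X^{\bullet})$ and $H^i(Y^{\bullet})$: the derived bottleneck distance $d_B^D$ is the infimum of those $\de$ for which $\calB(H^i(X^{\bullet}))$ and $\calB(H^i(Y^{\bullet}))$ are simultaneously $\de$-matched for all $i$, and by Corollary~\ref{cor:dint} the derived interleaving distance $d_I^D$ is the infimum of those $\de$ for which $H^i(X^{\bullet})$ and $H^i(Y^{\bullet})$ are simultaneously $\de$-interleaved for all $i$.

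First I would fix a non-negative integer $\de$ such that $X^{\bullet}$ and $Y^{\bullet}$ are derived $\de$-interleaved. By Corollary~\ref{cor:dint}, this is equivalent to saying that for every $i\in\bbZ$ the ordinary persistence modules $H^i(X^{\bullet})$ and $H^i(Y^{\bullet})$ are $\de$-interleaved, i.e.\ $d_I(H^i(X^{\bullet}), H^i(Y^{\bullet})) \le \de$. Next I would apply the classical AST, Theorem~\ref{thm:AST}, in each degree: for every $i$ we get $d_B(\calB(H^i(X^{\bullet})), \calB(H^i(Y^{\bullet}))) \le d_I(H^i(X^{\bullet}), H^i(Y^{\bullet})) \le \de$, so $\calB(H^i(X^{\bullet}))$ and $\calB(H^i(Y^{\bullet}))$ are $\de$-matched for every $i$. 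By Definition~\ref{dfn:dbottleneck} this is exactly the statement that $\calB^D(X^{\bullet})$ and $\calB^D(Y^{\bullet})$ are $\de$-matched, hence $d_B^D(\calB^D(X^{\bullet}), \calB^D(Y^{\bullet})) \le \de$. Taking the infimum over all such $\de$ yields $d_B^D(\calB^D(X^{\bullet}), \calB^D(Y^{\bullet})) \le d_I^D(X^{\bullet}, Y^{\bullet})$.

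A minor bookkeeping point I would address is that only finitely many cohomologies $H^i(X^{\bullet})$, $H^i(Y^{\bullet})$ are non-zero (since the complexes are bounded), so the ``for all $i$'' quantifiers are effectively finite and the infima are genuine minima over $\bbZ_{\ge 0}$; in particular there is no subtlety in passing from ``$\de$ works for each $i$'' to ``$\de$ works simultaneously''. Another small point: in degrees where both cohomologies vanish, the corresponding barcodes are empty and trivially $\de$-matched, so those degrees impose no constraint.

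Honestly, I expect no serious obstacle here: the theorem is essentially a formal corollary of the classical AST once the derived-side definitions have been set up so compatibly with cohomology, and the real work has already been done in Lemma~\ref{lem:dec} (which splits any bounded complex into shifted stalks of its cohomologies) and Corollary~\ref{cor:dint} (which transports interleavings through that splitting). If there is any friction at all, it is purely in making sure the quantifier order in the two infima matches — i.e.\ that ``$\de$-interleaved for all $i$'' and ``$\de$-matched for all $i$'' line up with the single-$\de$ infima defining $d_I^D$ and $d_B^D$ — but this is immediate from the definitions and the finiteness remark above.
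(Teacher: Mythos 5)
Your proposal is correct and follows essentially the same route as the paper: assume a derived $\de$-interleaving, pass to cohomology (only the easy direction of Corollary~\ref{cor:dint} is needed, which is immediate from Definition~\ref{dfn:dinterleave}), apply Theorem~\ref{thm:AST} degreewise, and conclude via Definition~\ref{dfn:dbottleneck}. The only cosmetic difference is that you credit Lemma~\ref{lem:dec} as the key input, whereas it is not actually used in this direction (it is needed for the converse inequality in the isometry theorem).
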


\begin{proof}
Assume that $X^{\bullet}$ and $Y^{\bullet}$ are derived $\de$-interleaved.
Then for all $i\in \bbZ$, $H^i(X^{\bullet})$ and  $H^i(Y^{\bullet})$ are $\de$-interleaved, and hence
$\calB(H^i (X^{\bullet}))$ and $\calB(H^i (Y^{\bullet}))$ are $\de$-matched by Theorem~\ref{thm:AST}.
Thus, by definition, the inequality
\begin{equation}d_B^D (\calB^D (X^{\bullet}),\calB^D (Y^{\bullet})) \leq d_I^D (X^{\bullet},Y^{\bullet})\end{equation}
holds.
\end{proof}

Next, we consider a distance induced by a derived equivalence.
Let $\calA$ be an abelian category. 
Assume that there exists a derived equivalence  $E$ from $\Db(\calA)$ to $\Db(\rep_{\Bbbk} A_n)$.
From \citet{rickard1989morita}, the derived equivalence can be characterized by a tilting complex. 
The construction of the tilting complex is not so easy, but a special one will be utilized in this paper (see Subsection~\ref{subsec:ZAST}).

\begin{definition} \label{dfn:dereqint}
Two objects $X$ and $Y$ of $\Db(\calA)$ are said to be \emph{$\de$-interleaved with respect to $E$} if 
$E(X)$ and $E(Y)$ are derived $\de$-interleaved in the sense of Definition~\ref{dfn:dinterleave}.
The \emph{interleaving distance $d_I^{E,\calA} (X,Y)$ with respect to $E$} is defined 
as
\begin{equation}
d_I^{E,\calA} (X,Y):=\inf\{ \de \in \bbZ_{\geq 0} \mid \text{$X$ and $Y$ are $\de$-interleaved with respect to $E$}\}.
\end{equation}
Namely, $d_I^{E,\calA} (X,Y)=d_I^D (E(X),E(Y))$ holds.
\end{definition}

\begin{remark}
By Remark~\ref{rmk:dint}, $d_I^{E,\calA} (X,Y)=0$ if and only if $E(X)$ and $E(Y)$ are isomorphic in $\Db (\rep_{\Bbbk} A_n)$.
Since $E$ is an equivalence, this means that $X$ and $Y$ are isomorphic in $\Db (\calA)$.
Thus, the interleaving distance defined as above also measures how far these objects are from being isomorphic.
This justifies calling the distance an interleaving distance. 
\end{remark}
\begin{remark}
The $\delta$-shift functor cannot be defined in the zigzag setting, so neither can the usual interleaving distance. 
One of the advantages of our approach is that we can define the interleaving distance even in the zigzag setting through the derived equivalence.
\end{remark}

Since $E$ is an equivalence, in particular, a fully faithful functor, 
$X\in \Db(\calA)$ is indecomposable if and only if so is $E(X)\in \Db(\rep_{\Bbbk} A_n)$.
Hence, since $\Db(\rep_{\Bbbk} A_n)$ is a Krull-Schmidt category, so is $\Db(\calA)$. 
Consequently, the derived equivalence $E$ induces a bijection between $\calB^D(E(X))$ (see Definition~\ref{dfn:dbarcode}) and  
\begin{equation}
    \calB^D_{\calA} (X):=\{ Z\in \Db(\calA) \mid \text{$Z$ is indecomposable and a direct summand of $X$} \}.
\end{equation}
Then the following distance between $\calB^D_{\calA} (X)$ and $\calB^D_{\calA} (Y)$ is naturally derived by passing through the derived equivalence $E$.

\begin{definition} \label{dfn:dereqbottle}
For two objects $X,Y$ of $\Db(\calA)$,
$\calB^D_{\calA}(X)$ and $\calB^D_{\calA}(Y)$ are said to be  \emph{$\de$-matched with respect to $E$} if $\calB^D(E(X))$ and $\calB^D(E(Y))$ are $\de$-matched in the sense of Definition~\ref{dfn:dbottleneck}.
The \emph{bottleneck distance $d_B^{E,\calA} (\calB^D_{\calA}(X),\calB^D_{\calA}(Y))$ with respect to $E$} is defined as
\begin{equation}
d_B^{E,\calA} (\calB^D_{\calA}(X),\calB^D_{\calA}(Y)):=
\inf \left\{ \de \in \bbZ_{\geq 0}\ \middle|\ 
\begin{array}{c}
\text{$\calB^D_{\calA} (X)$ and $\calB^D_{\calA} (Y)$ are }\\
\text{$\de$-matched with respect to $E$}
\end{array}
\right\}. 
\end{equation}
Namely, $d_B^{E,\calA} (\calB^D_{\calA}(X),\calB^D_{\calA}(Y))=d_B^D (\calB^D(E(X)),\calB^D(E(Y)))$ holds.
\end{definition}

In our convention, \emph{an AST} states that the interleaving distance between objects $X$ and $Y$ gives an upper bound for the bottleneck distance between their barcodes. 
Thus, as a consequence of Theorem~\ref{thm:DAST}, Definition~\ref{dfn:dereqint}, and 
Definition~\ref{dfn:dereqbottle}, we have the following AST for the derived category $\Db(\calA)$.

\begin{proposition} \label{prp:ASTdereq}
Let $\calA$ be an abelian category and $X,Y$ objects in $\Db(\calA)$.
Assume that there exists a derived equivalence $E$ from $\Db(\calA)$ to $\Db(\rep_{\Bbbk} A_n)$. 
Then 
\begin{equation}
d_B^{E,\calA} (\calB_{\calA}^D(X),\calB_{\calA}^D(Y)) \leq d_I^{E,\calA} (X,Y).
\end{equation}
\end{proposition}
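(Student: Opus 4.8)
The plan is to deduce the inequality directly from Theorem~\ref{thm:DAST} by unwinding the two definitions that transport the interleaving and bottleneck distances from $\Db(\rep_{\Bbbk} A_n)$ to $\Db(\calA)$ along $E$. First I would record that Definition~\ref{dfn:dereqint} is literally the identity $d_I^{E,\calA}(X,Y) = d_I^D(E(X),E(Y))$, so the right-hand side of the claim is nothing but the derived interleaving distance between the objects $E(X)$ and $E(Y)$ of $\Db(\rep_{\Bbbk} A_n)$. Dually, Definition~\ref{dfn:dereqbottle} gives $d_B^{E,\calA}(\calB_{\calA}^D(X),\calB_{\calA}^D(Y)) = d_B^D(\calB^D(E(X)),\calB^D(E(Y)))$, so the left-hand side is the derived bottleneck distance between the derived barcodes of $E(X)$ and $E(Y)$.

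After these substitutions the asserted inequality becomes exactly
\begin{equation}
d_B^D(\calB^D(E(X)),\calB^D(E(Y))) \leq d_I^D(E(X),E(Y)),
\end{equation}
which is Theorem~\ref{thm:DAST} applied to the cochain complexes $E(X)$ and $E(Y)$; here one uses only that $E$ takes values in $\Db(\rep_{\Bbbk} A_n)$, so that $E(X)$ and $E(Y)$ are legitimate inputs for that theorem. This completes the argument.

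I do not expect any genuine obstacle at this point. All of the substantive work is already contained in Theorem~\ref{thm:DAST}, which in turn rests on Corollary~\ref{cor:dint} and Lemma~\ref{lem:dec} (reducing derived interleavings to cohomology-wise interleavings) together with the classical AST (Theorem~\ref{thm:AST}) applied separately in each cohomological degree; and in the design of Definitions~\ref{dfn:dereqint} and~\ref{dfn:dereqbottle}, which are set up precisely so that $E$ becomes an isometry onto its image in both metrics. The only point deserving a moment's care is that the bottleneck statement makes sense at all, i.e.\ that $\calB_{\calA}^D(X)$ and $\calB^D(E(X))$ are in canonical bijection; this was already observed before Definition~\ref{dfn:dereqbottle} and follows from $E$ being an equivalence, hence preserving indecomposability, direct summands, and (via the Krull--Schmidt property of $\Db(\rep_{\Bbbk} A_n)$) the whole interval decomposition. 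So no new ingredient is needed beyond this bookkeeping.
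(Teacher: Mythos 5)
Your proposal is correct and matches the paper exactly: the paper presents Proposition~\ref{prp:ASTdereq} as an immediate consequence of Theorem~\ref{thm:DAST} together with the identities $d_I^{E,\calA}(X,Y)=d_I^D(E(X),E(Y))$ and $d_B^{E,\calA}(\calB^D_{\calA}(X),\calB^D_{\calA}(Y))=d_B^D(\calB^D(E(X)),\calB^D(E(Y)))$ built into Definitions~\ref{dfn:dereqint} and~\ref{dfn:dereqbottle}. Your remark about the bijection between $\calB^D_{\calA}(X)$ and $\calB^D(E(X))$ is the same bookkeeping the paper records just before Definition~\ref{dfn:dereqbottle}.
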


We will construct an equivalence $E$ explicitly in the next section, using classical tilting modules.

\subsection{AST for zigzag persistence modules} \label{subsec:ZAST}

In this subsection, we first provide an AST for an abelian category $\calA$ which is derived equivalent to $\rep_{\Bbbk} A_n$.
Recall that $\calA$ can be regarded as a full subcategory of $\Db(\calA)$.
As a consequence of Proposition~\ref{prp:ASTdereq}, we have the following result.

\begin{corollary} \label{cor:ASTdereq}
Let $\calA$ be an abelian category. 
Assume that $\calA$ is derived equivalent to $\rep_{\Bbbk} A_n$.
Then an AST also holds for $\calA$.
\end{corollary}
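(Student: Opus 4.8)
The plan is to derive Corollary~\ref{cor:ASTdereq} as a direct specialization of Proposition~\ref{prp:ASTdereq}. The key observation is that the statement ``an AST holds for $\calA$'' is, by our convention, precisely the inequality
\begin{equation}
d_B^{E,\calA}(\calB_{\calA}^D(X),\calB_{\calA}^D(Y)) \leq d_I^{E,\calA}(X,Y)
\end{equation}
for all objects $X,Y$ of $\Db(\calA)$, and this is exactly what Proposition~\ref{prp:ASTdereq} gives us once we know a derived equivalence $E\colon \Db(\calA)\to \Db(\rep_{\Bbbk} A_n)$ exists. So the first step is to unwind the hypothesis: $\calA$ is derived equivalent to $\rep_{\Bbbk} A_n$ means by definition that $\Db(\calA)$ and $\Db(\rep_{\Bbbk} A_n)$ are equivalent as triangulated categories, i.e.\ there exists a triangulated equivalence $E\colon \Db(\calA)\to \Db(\rep_{\Bbbk} A_n)$. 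This is precisely the hypothesis required to invoke Proposition~\ref{prp:ASTdereq}.

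Second, I would simply apply Proposition~\ref{prp:ASTdereq} with this $E$ to obtain the desired inequality for arbitrary $X,Y\in\Db(\calA)$. Since $\calA$ sits inside $\Db(\calA)$ as a full subcategory via the stalk-complex embedding $\xi$, the inequality in particular holds for objects $M,N$ of $\calA$ regarded as stalk complexes $M[0],N[0]$, which is the sense in which ``an AST holds for $\calA$''. One might add a sentence noting that the distances $d_I^{E,\calA}$ and $d_B^{E,\calA}$ are well-defined on $\Db(\calA)$ (and hence on $\calA$) by Definitions~\ref{dfn:dereqint} and~\ref{dfn:dereqbottle}, which only require the existence of $E$ together with the fact that $\Db(\rep_{\Bbbk} A_n)$ is Krull--Schmidt (so the barcode bijection $\calB^D(E(X))\leftrightarrow\calB^D_{\calA}(X)$ makes sense); these facts are all recorded in the discussion preceding Proposition~\ref{prp:ASTdereq}.

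There is essentially no obstacle here — the corollary is a formal restatement — so the only thing to be careful about is bookkeeping: one should note that the distances appearing in the conclusion depend a priori on the choice of $E$, so the precise assertion is ``for any choice of derived equivalence $E$, the AST holds with respect to $E$,'' and it is this $E$-dependent form that Proposition~\ref{prp:ASTdereq} supplies verbatim. If a single canonical statement is wanted, one fixes $E$ once and for all. The proof therefore reads: by hypothesis there is a triangulated equivalence $E\colon \Db(\calA)\xrightarrow{\sim}\Db(\rep_{\Bbbk} A_n)$; apply Proposition~\ref{prp:ASTdereq} to this $E$; done.
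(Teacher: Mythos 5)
Your proposal is correct and matches the paper's argument: the corollary is stated there as an immediate consequence of Proposition~\ref{prp:ASTdereq}, obtained by taking the triangulated equivalence $E$ guaranteed by the hypothesis and restricting to objects of $\calA$ viewed as stalk complexes in $\Db(\calA)$. Your extra remarks on the $E$-dependence of the distances are accurate bookkeeping but not needed beyond what the paper already records.
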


Next, we discuss an AST for zigzag persistence modules through a derived equivalence  between the categories $\Db(\rep_{\Bbbk} A_n(a))$ and $\Db(\rep_{\Bbbk} A_n )$ for an $A_n$-type quiver $A_n(a)$ with arbitrary orientation $a$. 
We consider the derived equivalence given by a classical tilting module.

\begin{definition}[\citealt{assem2006elements,brenner1980generalizations,bongartz1981tilted,happel1982tilted}]
Let $T$ be a persistence module.
Then $T$ is called a \emph{classical tilting module} if it satisfies the following three conditions:
\begin{enumerate}
    \item[(1)] the projective dimension of $T$ is at most $1$,
    \item[(2)] $\Ext^i (T,T)=0$ for all $i>0$, and 
    \item[(3)] $T$ has exactly $n$ non-isomorphic indecomposable direct summands. 
\end{enumerate}
\end{definition}

\citet{bongartz1981tilted} proved that our classical tilting modules are equivalent to the original ones 
(see \citealt{bongartz1981tilted} or \citealt{happel1982tilted} for the original definition), which are exactly tilting modules with projective dimension at most $1$ in the sense of \citet{miyashita1986tilting} (see also \citealt[p.118]{happel1988triangulated}).
Here, we recall that $\Hom(T,\blank)$ is a functor from $\rep_{\Bbbk} A_n$ to the module category $\mod \End (T)^{\op}$ of the endomorphism algebra on $T$.
We have the following property of classical tilting modules. 

\begin{lemma}[{\citealt[Chapter III]{happel1988triangulated}}] \label{lem:tiltderived}
Let $T$ be a classical tilting module.
Assume that the endomorphism algebra $B=\End (T)^{\op}$ is presented by the quiver $Q_B$ with no relations.
Then the functor $\RHom(T,\blank)$ is a derived equivalence  from $\Db(\rep_{\Bbbk} A_n)$ to $\Db(\rep_{\Bbbk} Q_B)$.
\end{lemma}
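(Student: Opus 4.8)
The plan is to invoke the general machinery of tilting theory, specifically Happel's theorem on the derived equivalence induced by a tilting module, and then verify that the hypothesis ``$B$ is presented by $Q_B$ with no relations'' makes $\rep_\Bbbk Q_B$ the module category of $B$. First I would recall Happel's fundamental result (\citealt[Chapter III, Theorem 2.10]{happel1988triangulated}): if $T$ is a classical tilting module over an algebra $A$ (here $A = \Bbbk A_n$), then the total right derived functor $\RHom_A(T,\blank) \colon \Db(\mod A) \to \Db(\mod B)$ is an equivalence of triangulated categories, where $B = \End_A(T)^{\op}$. This is exactly the statement that $\Db(\rep_\Bbbk A_n) \simeq \Db(\mod B)$ as triangulated categories, via $\RHom(T,\blank)$.

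The remaining step is purely a matter of identification: since by hypothesis $B$ is presented by the quiver $Q_B$ with no relations, we have $B \cong \Bbbk Q_B$, and hence $\mod B \cong \rep_\Bbbk Q_B$ as abelian categories (recall that $\rep_\Bbbk Q$ is equivalent to the category of finite-dimensional modules over the path algebra $\Bbbk Q$ when $Q$ has no oriented cycles; here $Q_B$ is a finite quiver of type $A_n$ with some orientation, hence acyclic). This equivalence of abelian categories induces an equivalence $\Db(\mod B) \simeq \Db(\rep_\Bbbk Q_B)$ of triangulated categories by applying the functor $\Db(\blank)$, which is functorial with respect to exact equivalences. Composing, we obtain
\begin{equation}
\RHom(T,\blank) \colon \Db(\rep_\Bbbk A_n) \xrightarrow{\sim} \Db(\mod B) \xrightarrow{\sim} \Db(\rep_\Bbbk Q_B),
\end{equation}
a derived equivalence, which is the assertion.

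I would also remark, for the reader's benefit, why the global dimension hypotheses of the earlier lemmas are in force: $\rep_\Bbbk A_n$ has enough injectives and global dimension at most $1$ (as noted in Subsection~\ref{subsec:dercateAn}), so $\RHom(T,\blank)$ is genuinely defined on the bounded derived category via finite injective resolutions, and the equivalence $\Db(\rep_\Bbbk A_n) \simeq \Kb(\proj A_n)$ of Lemma~\ref{lem:fingl} means one can compute it on a well-understood model. The main (and really the only) obstacle is not in the argument but in checking the standing hypotheses when one later applies this lemma: one must exhibit a concrete classical tilting module $T$ over $\Bbbk A_n$ whose endomorphism algebra $\End(T)^{\op}$ is the path algebra of the desired zigzag quiver $A_n(a)$ \emph{with no relations}. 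Verifying that $\End(T)^{\op}$ has no relations — equivalently, that it is hereditary and of the right shape — is the delicate point, but that verification belongs to the construction of $T$ in the sequel, not to the proof of this lemma, which is simply a citation of Happel's theorem combined with the identification $\mod \Bbbk Q_B = \rep_\Bbbk Q_B$.
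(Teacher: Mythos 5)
Your proposal is correct and matches the paper's treatment: the paper gives no independent argument for this lemma but simply cites Happel's tilting theorem from \citealt[Chapter III]{happel1988triangulated}, exactly as you do, with the identification $\mod B \cong \rep_\Bbbk Q_B$ left implicit. Your additional remarks on why $\RHom(T,\blank)$ is well defined and on where the real work lies (constructing $T$ with hereditary endomorphism algebra, done later via sections of the AR quiver) are accurate but not part of the paper's proof of this statement.
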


Now, we construct a classical tilting module whose endomorphism algebra is presented by the quiver $A_n(a)$ (with no relations).

Let $\tau, \tau^{-1}$ be the AR translations in $\rep_{\Bbbk} A_n$ (see 
\citealt[Chapter IV.2, 2.3 Definition]{assem2006elements} or \citealt[2.3.3]{schiffler2014quiver} for definition).
For an indecomposable non-projective (resp.\ non-injective) representation $M$,  
$\tau (M)$ is indecomposable non-injective and $\tau^{-1}\tau(M)\cong M$ (resp.\ $\tau^{-1} (M)$ is indecomposable non-projective and $\tau\tau^{-1}(M)\cong M$) (see \citealt[Chapter IV.2, 2.10 Proposition]{assem2006elements}). 
The \emph{$\tau$-orbit} of $M$ is the set of indecomposable representations of the form $\tau^m(M)$ or $\tau^{-m}(M)$ for some non-negative integer $m$, where $\tau^{-m}:=(\tau^{-1})^m$. 

Since the AR quiver of $A_n$ is finite and connected, 
there are finitely many $\tau$-orbits.
Moreover, each $\tau$-orbit contains exactly one indecomposable projective representation (see \citealt[3.1.2]{schiffler2014quiver}).

Let $O(P_i)$ be the $\tau$-orbit of the indecomposable projective representation $P_i$ corresponding to the vertex $i$ of $A_n$.
Note that there are $n$ $\tau$-orbits $O(P_1), \cdots, O(P_n)$, and that $O(P_1)=\{P_1\}$ since $P_1$ is projective-injective.
Then, the set $\{O(P_i)\mid i=1,\cdots, n\}$ is just the set of all $\tau$-orbits,
the $\tau$-orbit $O(P_i)$ is finite for any $i$, and 
any indecomposable representation of $A_n$ belongs to the $\tau$-orbit $O(P_i)$ for some $i$. 
A \emph{section} of the AR quiver $\AR(A_n)$ is a connected full subquiver formed by representatives of all $\tau$-orbits $O(P_i)$.

\begin{example}
In the AR quiver $\AR (A_3)$ 
\begin{equation}
\AR (A_3)=
    \begin{tikzcd}[graphstyle,every matrix/.append style={name=m}]
      && \boxed{\bbI [1,3]} \arrow[dr] \\
      & \boxed{\bbI [2,3]} \arrow[ur] \arrow[dr] & 
      & \arrow[ll,dashed] \bbI [1,2] \arrow[dr]  \\
      \bbI [3,3] \arrow[ur]  &   & \arrow[ll,dashed] \boxed{\bbI [2,2]} \arrow[ur] &  & \arrow[ll,dashed]\bbI [1,1] 
    \end{tikzcd},
\end{equation}
the actions of $\tau$ are denoted by dashed arrows, meaning that there are three $\tau$-orbits 
$O(\bbI[3,3])=\{\bbI[3,3],\bbI[2,2],\bbI[1,1]\}$,
$O(\bbI[2,3])=\{\bbI[2,3], \bbI[1,2]\}$, and 
$O(\bbI[1,3])=\{\bbI[1,3]\}$ 
of projective representations $\bbI[3,3],\bbI[2,3]$, and $\bbI[1,3]$, respectively.
In this case, for example, $\Sigma=\{\bbI[1,3],\bbI[2,3],\bbI[2,2] \}$ is a section.
These representations are written as rectangle-surrounded vertices in the foregoing AR quiver.

More generally, a section $\Sigma$ in the AR quiver $\AR ( A_n )$ of $A_n$ is described like the red line in Figure~\ref{fig:section}.
\begin{figure}[h]
    \centering
    \includegraphics[width=4cm,height=4cm]{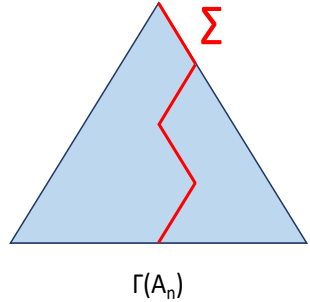}
    \caption{Example of a section $\Si$ in the AR quiver $\AR(A_n)$}
    \label{fig:section}
\end{figure}
\end{example}

Fix a section $\Sigma$ with vertices
$\Sigma_0=\{X_1=P_1,\cdots,X_n\}$, where $X_i\in O(P_i)$ for each $i$. 
Note that $P_1$ is the top vertex of the AR quiver $\AR (A_n)$ of $A_n$. 
Then we define a module $T(\Si)$ as follows:
\begin{equation}
T(\Si) = \bigoplus_{i=1}^{n} X_i.
\end{equation}

\begin{lemma}[{\citealt[2.6 Corollary]{bongartz1981tilted}}]
\label{lem:tiltsection}
For any section $\Si$, $T(\Si)$ is a classical tilting module in $\rep_{\Bbbk} A_n$.
\end{lemma}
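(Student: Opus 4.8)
The plan is to verify the three defining conditions of a classical tilting module for $T(\Si)$ directly from the structure of the section $\Si$ in $\AR(A_n)$. Condition~(3) is immediate: by construction $T(\Si)=\bigoplus_{i=1}^n X_i$ where the $X_i$ lie in the $n$ distinct $\tau$-orbits $O(P_1),\dots,O(P_n)$, so the $X_i$ are pairwise non-isomorphic, giving exactly $n$ non-isomorphic indecomposable summands. Condition~(1) is automatic in this setting: the global dimension of $\rep_\Bbbk A_n$ is at most $1$, so \emph{every} representation has projective dimension at most $1$; in particular $T(\Si)$ does.

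The heart of the matter is condition~(2), namely $\Ext^i(T(\Si),T(\Si))=0$ for all $i>0$; since $\gldim \rep_\Bbbk A_n \le 1$, this reduces to showing $\Ext^1(X_i,X_j)=0$ for all $i,j$. First I would recall the Auslander--Reiten formula (the AR duality) $\Ext^1(M,N)\cong D\overline{\Hom}(N,\tau M)$ (equivalently $D\underline{\Hom}(\tau^{-1}N,M)$), valid in $\rep_\Bbbk A_n$ since this category is hereditary. So it suffices to show $\Hom(X_j,\tau X_i)=0$ for all $i,j$. Now the key geometric fact about a section $\Si$: it is a connected full subquiver of $\AR(A_n)$ meeting each $\tau$-orbit exactly once, and — this is the property I would isolate and prove, or cite from the structure of $\AR(A_n)$ for type $A$ — for any two vertices $X_i,X_j$ of $\Si$, the translate $\tau X_i$ lies strictly ``to the left'' of $\Si$ (i.e.\ in the region $\tau^{m}(\Si_0)$ for $m\ge 1$) whenever $\tau X_i$ is defined, while $X_j$ lies on $\Si$ itself. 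Since in the hereditary (directed) category $\rep_\Bbbk A_n$ all nonzero morphisms between indecomposables go ``left to right'' along paths in $\AR(A_n)$, and there is no path from a vertex on $\Si$ to a vertex strictly to the left of $\Si$, we get $\Hom(X_j,\tau X_i)=0$. The remaining case is when $X_i$ is projective, where $\tau X_i$ is undefined and $\Ext^1(X_i,-)=0$ trivially for $X_i$ projective; combined with $\Ext^1(-,X_j)$ needing $X_j$ injective for a dual argument, one just needs the morphism-vanishing argument above.

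The main obstacle I anticipate is making the phrase ``$\tau X_i$ lies strictly to the left of the section'' precise and proving it, rather than merely drawing the picture in Figure~\ref{fig:section}. Concretely, one fixes coordinates on $\AR(A_n)\cong \bbZ A_n$ (or on the finite AR quiver of $A_n$), writes the section as the graph of a height function, checks that applying $\tau$ shifts coordinates by one ``mesh step'' in the direction away from the section, and then invokes the fact that $\Hom(X,Y)\neq 0$ between indecomposables forces the existence of a directed path $X\to Y$ in $\AR(A_n)$ (a standard property of representation-finite hereditary algebras, or of $\bbZ A_n$). Once this combinatorial lemma is in hand, conditions (1)--(3) assemble into the claim, and one may additionally remark that $\End(T(\Si))^{\op}$ is presented by a quiver of type $A_n$ with no relations — which is precisely the input needed for Lemma~\ref{lem:tiltderived} — since the full subquiver $\Si$ of $\AR(A_n)$ is itself an orientation of the $A_n$ diagram and $\Hom$-composition among the $X_i$ realizes exactly its arrows with no commutativity or zero relations forced (again because the ambient category is hereditary and the section is convex). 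I would cite \citealt[2.6 Corollary]{bongartz1981tilted} for the clean statement but sketch this AR-quiver argument as the conceptual reason.
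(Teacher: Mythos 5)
The paper does not actually prove this lemma: it is stated as a citation of \citealt[2.6 Corollary]{bongartz1981tilted} and no argument is given in the text. Your proposal supplies the standard proof that underlies that citation (the ``complete slice'' construction in tilted-algebra theory), and its outline is correct: condition (3) follows because the $X_i$ lie in the $n$ distinct $\tau$-orbits, condition (1) is automatic since the global dimension of $\rep_{\Bbbk} A_n$ is at most $1$, and condition (2) reduces via the Auslander--Reiten formula $\Ext^1(X_i,X_j)\cong D\overline{\Hom}(X_j,\tau X_i)$ to the vanishing of $\Hom(X_j,\tau X_i)$, which follows from directedness of $\rep_{\Bbbk} A_n$ together with the separation property of a section. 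You have correctly located where the real mathematical content sits: the claim that $\tau X_i$ lies strictly on the far side of $\Si$ from every $X_j\in\Si_0$, so that no path in $\AR(A_n)$ runs from $\Si$ to $\tau\Si$. With the paper's rather minimal definition of a section (a connected full subquiver meeting every $\tau$-orbit exactly once) this separation property is not a tautology and does require the coordinate/mesh argument in $\bbZ A_n$ that you sketch; as written your proposal asserts it rather than proves it, so a complete write-up would have to carry out that combinatorial step (or simply retreat to the citation, as the paper does). Your closing remark that $\End(T(\Si))^{\op}$ is presented by $\Si^{\op}$ with no relations is a useful bonus, since that is exactly the hypothesis consumed by Lemma~\ref{lem:tiltderived}, but it too rests on the same convexity of $\Si$.
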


In this setting, $P_1$ is a direct summand of any classical tilting module.
More generally, any projective-injective representation is a direct summand of any classical tilting module.

The endomorphism algebra $\End (T(\Si))^{\op}$ is presented by the quiver $\Sigma^{\op}$.
By definition, every section is an $A_n(a)$-type quiver with some orientation $a$ and any $A_n(a)$-type quiver appears as a section. 
As a consequence of Lemma~\ref{lem:tiltderived} and Lemma~\ref{lem:tiltsection} we have the following result.

\begin{proposition} \label{prp:tiltderived}
If a section $\Si$ is isomorphic to $A_n(a)^{\op}$,
then the functor $\RHom(T(\Si),\blank)$ is a derived equivalence from $\Db(\rep_{\Bbbk} A_n)$ to $\Db(\rep_{\Bbbk} A_n(a))$.
\end{proposition}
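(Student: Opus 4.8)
The plan is to deduce this from the general machinery already assembled. The statement to prove is that if a section $\Si$ of $\AR(A_n)$ is isomorphic (as a quiver) to $A_n(a)^{\op}$, then $\RHom(T(\Si),\blank)\colon \Db(\rep_{\Bbbk}A_n)\to\Db(\rep_{\Bbbk}A_n(a))$ is a derived equivalence. First I would invoke Lemma~\ref{lem:tiltsection}: for any section $\Si$, the module $T(\Si)=\bigoplus_{i=1}^n X_i$ is a classical tilting module in $\rep_{\Bbbk}A_n$. Thus the three defining conditions (projective dimension $\leq 1$, $\Ext^i(T(\Si),T(\Si))=0$ for $i>0$, and exactly $n$ non-isomorphic indecomposable summands) are satisfied, so $T(\Si)$ is a legitimate input for Lemma~\ref{lem:tiltderived}.

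Next I would identify the endomorphism algebra. As recorded in the paragraph preceding the statement, $\End(T(\Si))^{\op}$ is presented by the quiver $\Si^{\op}$ with \emph{no relations} — this is the point where one uses that $\Si$ is a full connected subquiver of the AR quiver of a path algebra of Dynkin type $A_n$, so the composition of any two consecutive irreducible maps along $\Si$ is nonzero and there are no mesh relations internal to a section. By hypothesis $\Si\cong A_n(a)^{\op}$ as quivers, hence $\Si^{\op}\cong (A_n(a)^{\op})^{\op}=A_n(a)$, and therefore $\End(T(\Si))^{\op}$ is presented by the quiver $A_n(a)$ with no relations. Consequently $\mod\End(T(\Si))^{\op}\simeq\rep_{\Bbbk}A_n(a)$, and hence $\Db(\mod\End(T(\Si))^{\op})\simeq\Db(\rep_{\Bbbk}A_n(a))$ as triangulated categories.

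Finally I would apply Lemma~\ref{lem:tiltderived} with $T=T(\Si)$ and $Q_B=A_n(a)$: the functor $\RHom(T(\Si),\blank)$ is a derived equivalence from $\Db(\rep_{\Bbbk}A_n)$ to $\Db(\rep_{\Bbbk}A_n(a))$, which is exactly the assertion. The only step requiring genuine care — and the one I expect to be the main obstacle — is the claim that $\End(T(\Si))^{\op}$ is presented by $\Si^{\op}$ \emph{with no relations}. One must check both that the quiver of this algebra is $\Si^{\op}$ (this is essentially the classical fact, e.g.\ \citealt{bongartz1981tilted}, that a section is the quiver of the tilted algebra) and that the algebra is hereditary with no relations; the latter follows because $\rep_{\Bbbk}A_n$ has global dimension $\leq 1$ together with the $\Ext$-vanishing $\Ext^i(T(\Si),T(\Si))=0$ for $i>0$, which forces the tilted algebra $\End(T(\Si))^{\op}$ to again be hereditary of Dynkin type $A_n$, hence the path algebra of $\Si^{\op}\cong A_n(a)$ with no relations. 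Once this is in place, everything else is a direct citation of the preceding lemmas, and the statement that every $A_n(a)$-type quiver arises as such a section $\Si^{\op}$ (noted just before the proposition) confirms that the construction covers all orientations.
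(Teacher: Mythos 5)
Your proposal is correct and follows essentially the same route as the paper: Lemma~\ref{lem:tiltsection} gives that $T(\Si)$ is a classical tilting module, the identification of $\End(T(\Si))^{\op}$ as the path algebra of $\Si^{\op}\cong A_n(a)$ with no relations is exactly the fact the paper asserts just before the proposition, and Lemma~\ref{lem:tiltderived} then yields the equivalence. Your extra discussion of why the tilted algebra is hereditary (so that there are genuinely no relations) only expands on a point the paper takes for granted, citing the same source (Bongartz).
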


\begin{remark}
Under the derived equivalence $\RHom(T(\Si),\blank)$,
the AR quiver $\AR(A_n(a))$ can be embedded into the AR quiver of $\Db(\rep_{\Bbbk} A_n)$ as in Figure~\ref{fig:zigzag}.
\begin{figure}[h]
    \centering
    \includegraphics[width=7.5cm,height=5cm]{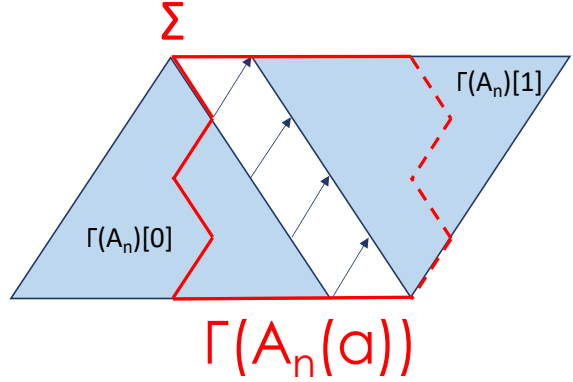}
    \caption{The red-bordered polygon except for the broken polygonal line is the area of the AR quiver $\AR (A_n(a))$.
    For example, the AR quiver $\AR (A_3(z_1))$ was described as in Example~\ref{ex:ARquiver} 
    }
    \label{fig:zigzag}
\end{figure}
\end{remark}

In the case that $\calA=\rep_{\Bbbk} A_n(a)$ and $E$ is the quasi-inverse of $\RHom(T(\Si),\blank)$ with $\Si^{\op} \cong A_n (a)$, 
we put $d_I^{T,a} := d_I^{E,\calA}$ and $d_B^{T,a}:=d_B^{E,\calA}$.
Thus, we conclude the following AST for $\rep_{\Bbbk} A_n (a)$ with arbitrary orientations $a$.
Note that $\calB_\calA^D(X)$ in Definition~\ref{dfn:dereqbottle} and the ordinary barcode $\calB(X)$ coincide for any $X \in \calA=\rep_{\Bbbk} A_n(a)$.

\begin{theorem}[AST for zigzag] \label{thm:ASTzigzag}
Let $X,Y$ be zigzag persistence modules in the category  $\rep_{\Bbbk} A_n (a)$ with arbitrary orientation $a$.
Then 
\begin{equation}
d_B^{T,a} (\calB(X), \calB(Y)) \leq d_I^{T,a} (X,Y).
\end{equation}
\end{theorem}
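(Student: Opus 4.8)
The plan is to deduce this immediately from Proposition~\ref{prp:ASTdereq}, once the appropriate derived equivalence is in place and the two notions of barcode are matched up. First I would invoke Proposition~\ref{prp:tiltderived}: since every $A_n$-type quiver with orientation $a$ occurs (up to isomorphism) as a section of $\AR(A_n)$, we may fix a section $\Si$ with $\Si^{\op}\cong A_n(a)$; then $\RHom(T(\Si),\blank)$ is a derived equivalence $\Db(\rep_{\Bbbk} A_n)\to\Db(\rep_{\Bbbk} A_n(a))$, and I let $E$ be its quasi-inverse, a derived equivalence $\Db(\rep_{\Bbbk} A_n(a))\to\Db(\rep_{\Bbbk} A_n)$. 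Taking $\calA=\rep_{\Bbbk} A_n(a)$, this is precisely the situation hypothesized in Proposition~\ref{prp:ASTdereq}.

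Applying Proposition~\ref{prp:ASTdereq} with this $\calA$ and $E$ gives
\[
d_B^{E,\calA}(\calB_\calA^D(X),\calB_\calA^D(Y))\le d_I^{E,\calA}(X,Y)
\]
for all $X,Y\in\Db(\calA)$, hence in particular for zigzag persistence modules $X,Y\in\rep_{\Bbbk} A_n(a)$ viewed as stalk complexes concentrated in cohomological degree $0$. Unwinding the definitions $d_I^{T,a}=d_I^{E,\calA}$ and $d_B^{T,a}=d_B^{E,\calA}$ rewrites the right-hand side as $d_I^{T,a}(X,Y)$ and the left-hand side as $d_B^{T,a}(\calB_\calA^D(X),\calB_\calA^D(Y))$.

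The last step is the identification $\calB_\calA^D(X)=\calB(X)$ for $X\in\rep_{\Bbbk} A_n(a)$. For this I would appeal to Corollary~\ref{cor:dec}: a representation $X$, regarded as a stalk complex in degree $0$, has indecomposable direct summands in $\Db(\rep_{\Bbbk} A_n(a))$ given exactly by the stalk complexes $\bbI[b,d][0]$ appearing in the interval decomposition of $X$ as a representation. Hence $\calB_\calA^D(X)$ is canonically the ordinary barcode $\calB(X)$, and substituting this yields $d_B^{T,a}(\calB(X),\calB(Y))\le d_I^{T,a}(X,Y)$, as desired.

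I do not expect a genuine obstacle in this final assembly; all the substance has already been front-loaded — into Proposition~\ref{prp:ASTdereq} (which itself reduces, cohomology-degreewise via Corollary~\ref{cor:dint}, to the ordinary AST of Theorem~\ref{thm:AST}) and into the tilting machinery producing the equivalence $E$. The only point requiring a moment's care is the bookkeeping claim $\calB_\calA^D(X)=\calB(X)$, i.e.\ that embedding a bare representation into its derived category and extracting its indecomposable summands there recovers precisely the classical barcode; this is exactly what Corollary~\ref{cor:dec} supplies in degree $0$.
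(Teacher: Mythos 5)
Your proposal is correct and follows essentially the same route as the paper: the paper likewise combines Proposition~\ref{prp:tiltderived} with Proposition~\ref{prp:ASTdereq} (via Corollary~\ref{cor:ASTdereq}) and notes the identification $\calB_{\calA}^D(X)=\calB(X)$, for which you supply the justification from Corollary~\ref{cor:dec}. No gaps.
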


\begin{proof}
Since we have the derived equivalence $\RHom(T(\Si),\blank)$ from $\Db(\rep_{\Bbbk} A_n)$ to $\Db(\rep_{\Bbbk} A_n(a))$ by Proposition~\ref{prp:tiltderived}, 
the result follows from Corollary~\ref{cor:ASTdereq}.
\end{proof}

\section{Isometry theorem} \label{sec:DIST}

In this section, we will prove an isometry theorem for the category  $\rep_{\Bbbk} A_n (a)$ of zigzag persistence modules.
Theorem~\ref{thm:AST} gives 
the inequality $d_B \leq d_I$,
which is a part of the following isometry theorem (see \citealt[Theorem 3.1 and Section B.1]{bauer2015induced}).

\begin{theorem}[Isometry theorem] \label{thm:IST}
Let $M,N$ be persistence modules.
Then 
\begin{equation}
d_B (\calB(M),\calB(N)) = d_I (M,N).
\end{equation}
\end{theorem}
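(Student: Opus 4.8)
The plan is to establish the converse inequality $d_I(M,N) \leq d_B(\calB(M),\calB(N))$, since the inequality $d_B \leq d_I$ is exactly \pref{thm:AST}. The strategy is the standard one: show that a $\de$-matching between barcodes can be lifted to a $\de$-interleaving pair of morphisms. Concretely, suppose $\si \colon \calB(M) \nrightarrow \calB(N)$ is a $\de$-matching. First I would reduce to understanding what $\si$ does on individual intervals and on the unmatched ones. For a matched pair $\si\langle b,d\rangle = \langle b',d'\rangle$, the constraint \eqref{eq:corrint} says precisely that the interval representations $\bbI[b,d]$ and $\bbI[b',d']$ are $\de$-interleaved (as noted right after \pref{dfn:bottleneck}); for an unmatched interval $\langle b,d\rangle$ with $d-b < 2\de$, the module $\bbI[b,d]$ is $\de$-trivial, so it is $\de$-interleaved with the zero module via the trivial pair.

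Next I would assemble these pointwise interleavings into morphisms $f \colon M \to N(\de)$ and $g \colon N \to M(\de)$. Writing $M \iso \bigoplus_k \bbI[b_k,d_k]$ and $N \iso \bigoplus_k \bbI[b'_k,d'_k]$ with the indexing chosen so that $\si$ pairs up the $k$-th summands (and the unmatched summands are paired with $0$), I would take $f$ and $g$ to be the direct sums of the chosen interleaving morphisms $f_k \colon \bbI[b_k,d_k] \to \bbI[b'_k,d'_k](\de)$ and $g_k \colon \bbI[b'_k,d'_k] \to \bbI[b_k,d_k](\de)$ on each summand. Because the shift functor $(\de)$ and the transition morphism $\ph^{\de}$ are compatible with direct sums, the two triangles in \pref{dfn:interleave} commute for $f,g$ as soon as they commute on each summand, which holds by construction. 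This shows $M$ and $N$ are $\de$-interleaved, hence $d_I(M,N) \leq \de$; taking the infimum over $\de$ gives $d_I(M,N) \leq d_B(\calB(M),\calB(N))$.

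The one genuinely delicate point is the choice of interleaving morphisms on each matched pair of intervals: I must check that there actually exist $\de$-interleaving morphisms $\bbI[b,d] \to \bbI[b',d'](\de)$ and back whenever \eqref{eq:corrint} holds, and — since the shift functor here truncates at the boundary of $A_n$ rather than being invertible — I should be careful about intervals near the endpoints $1$ and $n$. This is handled exactly as in the AST proof: extend $M$ and $N$ to representations of $A_\ell$ for $\ell$ large enough that the shift operations lose no information, construct the interleaving there, and then observe that the interleaving morphisms restrict back to $\rep_{\Bbbk} A_n$. On $A_\ell$, the morphism $\bbI[b,d] \to \bbI[b',d'](\de)$ is simply the natural "overlap" map, which is nonzero (identity on the common support) precisely when $b' - \de \leq b$ and $d \leq d' + \de$, i.e. the first half of \eqref{eq:corrint}, and symmetrically for $g$; the composite $g(\de)\circ f$ equals $\ph^{2\de}_{\bbI[b,d]}$ because on the intervals involved both are the appropriate overlap/transition maps, which agree.

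I would also remark that this proof is essentially the "easy direction" of the isometry theorem and is exactly what is carried out in \citet[Section B.1]{bauer2015induced}; the main content of \pref{thm:IST} beyond \pref{thm:AST} is thus this matching-to-interleaving lifting, and the only place requiring care is the boundary behaviour of the shift functor, which the $A_\ell$-extension trick from \pref{subsec:AST} resolves.
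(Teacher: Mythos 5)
The paper gives no proof of \pref{thm:IST}: it simply cites \citet[Theorem 3.1 and Section B.1]{bauer2015induced} for the direction not covered by \pref{thm:AST}, and your argument is exactly the standard matching-to-interleaving lifting that this citation refers to (summand-wise overlap maps, zero maps on short unmatched intervals, direct sums, and the $A_\ell$-extension to handle truncation of the shift), so it is correct and consistent with the paper's approach. One small correction: an unmatched interval with $d-b<2\de$ is $2\de$-trivial rather than $\de$-trivial, and $2\de$-triviality is precisely the condition needed for $\bbI[b,d]$ to be $\de$-interleaved with $0$, so the conclusion you draw from it is the right one.
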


By Theorem \ref{thm:IST}, we obtain an isometry theorem for the derived category of persistence modules.

\begin{theorem}[Isometry theorem for derived categories] \label{thm:DIST}
Let $X^{\bullet},Y^{\bullet}$ be cochain complexes in $\Db (\rep_{\Bbbk} A_n)$.
Then 
\begin{equation}
d_B^D (\calB^D(X^{\bullet}),\calB^D(Y^{\bullet})) = d_I^D (X^{\bullet},Y^{\bullet}).
\end{equation}
\end{theorem}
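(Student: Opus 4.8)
The inequality $d_B^D(\calB^D(X^{\bullet}),\calB^D(Y^{\bullet}))\le d_I^D(X^{\bullet},Y^{\bullet})$ is exactly Theorem~\ref{thm:DAST}, so the task is to prove the reverse inequality $d_I^D(X^{\bullet},Y^{\bullet})\le d_B^D(\calB^D(X^{\bullet}),\calB^D(Y^{\bullet}))$. The plan is to reduce both distances, level by level in cohomology, to the non-derived isometry Theorem~\ref{thm:IST}.

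First I would do the bookkeeping. Since $X^{\bullet},Y^{\bullet}$ are bounded, only finitely many of the cohomologies $H^i(X^{\bullet}),H^i(Y^{\bullet})$ are nonzero. For a fixed $i$, the set of $\de\in\bbZ_{\ge 0}$ for which $H^i(X^{\bullet})$ and $H^i(Y^{\bullet})$ are $\de$-interleaved is upward closed: a $\de$-interleaving pair $(f,g)$ yields a $(\de+1)$-interleaving pair by post-composing $f$ and $g$ with the transition morphisms $\ph^1$, using $(\de)(1)=(\de+1)$ on $\rep_{\Bbbk}A_n$ and the naturality of $\ph^1$ from Definition~\ref{dfn:per}. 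Likewise the set of $\de$ for which $\calB(H^i(X^{\bullet}))$ and $\calB(H^i(Y^{\bullet}))$ are $\de$-matched is upward closed, since the very same matching witnesses $\de'$-matchedness for every $\de'\ge\de$. Combining upward-closedness with Corollary~\ref{cor:dint} and Definition~\ref{dfn:dbottleneck} gives
\[
d_I^D(X^{\bullet},Y^{\bullet})=\max_{i\in\bbZ} d_I\bigl(H^i(X^{\bullet}),H^i(Y^{\bullet})\bigr),\qquad
d_B^D\bigl(\calB^D(X^{\bullet}),\calB^D(Y^{\bullet})\bigr)=\max_{i\in\bbZ} d_B\bigl(\calB(H^i(X^{\bullet})),\calB(H^i(Y^{\bullet}))\bigr),
\]
where each maximum is over the finitely many $i$ with $H^i(X^{\bullet})$ or $H^i(Y^{\bullet})$ nonzero (and both sides are $0$ if there is no such $i$, the trivial case).

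Then I would apply Theorem~\ref{thm:IST} termwise: for every $i$ one has $d_B(\calB(H^i(X^{\bullet})),\calB(H^i(Y^{\bullet})))=d_I(H^i(X^{\bullet}),H^i(Y^{\bullet}))$. Taking the maximum over $i$ of both sides and substituting into the two displayed formulas yields $d_B^D(\calB^D(X^{\bullet}),\calB^D(Y^{\bullet}))=d_I^D(X^{\bullet},Y^{\bullet})$, which is the assertion (and in particular re-proves Theorem~\ref{thm:DAST}).

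The only genuinely non-formal ingredient is the isometry Theorem~\ref{thm:IST}; everything else is the cohomology-wise reduction already packaged in Corollary~\ref{cor:dint} and Definition~\ref{dfn:dbottleneck}. The one step requiring a little care — the "main obstacle", such as it is — is the upward-closedness of the $\de$-interleaving relation on a fixed pair of persistence modules, needed to pass from "$d_I(H^i(X^{\bullet}),H^i(Y^{\bullet}))\le\de$ for all $i$" to "all the $H^i(X^{\bullet})$, $H^i(Y^{\bullet})$ are simultaneously $\de$-interleaved", which is the hypothesis Corollary~\ref{cor:dint} consumes. This is a short naturality computation with the transition morphisms of Definition~\ref{dfn:per}, and is in any event a standard property of interleavings.
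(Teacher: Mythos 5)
Your proof is correct and follows essentially the same route as the paper: one inequality is Theorem~\ref{thm:DAST}, and the reverse is obtained by reducing to cohomology degree-wise, invoking the non-derived isometry Theorem~\ref{thm:IST} at each level, and reassembling via Corollary~\ref{cor:dint}. The only difference is presentational — you make explicit the upward-closedness of the $\de$-interleaving and $\de$-matching relations (which the paper's proof uses implicitly when it passes from $\de$-matched barcodes to a $\de$-interleaving pair at each level), and you phrase the conclusion as an identity of maxima rather than arguing directly at a fixed $\de$.
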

\begin{proof}
From Theorem~\ref{thm:DAST}, we have only to show
\begin{equation}
d_B^D (\calB^D(X^{\bullet}),\calB^D(Y^{\bullet})) \geq d_I^D (X^{\bullet},Y^{\bullet}).
\end{equation}

If $\calB^D(X^{\bullet})$ and $\calB^D(Y^{\bullet})$ are $\de$-matched, 
then there exists a $\de$-matching between $\calB(H^i (X^{\bullet}))$ and $\calB(H^i(Y^{\bullet}))$ for each $i$ by definition.
Then, by Theorem~\ref{thm:IST}, 
there exists a $\de$-interleaving pair $(f_i,g_i)$ between $H^i(X^{\bullet})$ and $H^i (Y^{\bullet})$ for each $i$.
By Corollary~\ref{cor:dint}, $X^{\bullet}$ and $Y^{\bullet}$ are derived $\de$-interleaved in $\Db(\rep_{\Bbbk} A_n)$.
\end{proof}

As a consequence of Theorem~\ref{thm:DIST}, we can extend Proposition~\ref{prp:ASTdereq} and Corollary~\ref{cor:ASTdereq} to isometry theorems by Definition~\ref{dfn:dereqint} and Definition~\ref{dfn:dereqbottle}.

\begin{corollary} \label{cor:ISTdersub}
Let $\calA$ be an abelian category and 
$X,Y$ objects in $\calA$ or $\Db(\calA)$.
Assume that there exists a derived equivalence $E$ from $\Db(\calA)$ to $\Db(\rep_{\Bbbk} A_n)$. 
Then 
\begin{equation}
d_B^{E,\calA} (B_{\calA}^D(X),B_{\calA}^D(Y)) = d_I^{E,\calA} (X,Y).
\end{equation}
\end{corollary}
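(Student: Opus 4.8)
The plan is to reduce the statement directly to the already-established isometry theorem for the derived category of ordinary persistence modules, namely Theorem~\ref{thm:DIST}, by unwinding the definitions of the two distances $d_I^{E,\calA}$ and $d_B^{E,\calA}$. Recall from Definition~\ref{dfn:dereqint} that $d_I^{E,\calA}(X,Y) = d_I^D(E(X),E(Y))$ by construction, and from Definition~\ref{dfn:dereqbottle} that $d_B^{E,\calA}(B_{\calA}^D(X),B_{\calA}^D(Y)) = d_B^D(\calB^D(E(X)),\calB^D(E(Y)))$ by construction; here we use that $E$ carries the Krull--Schmidt decomposition of $X$ (resp.\ $Y$) in $\Db(\calA)$ to that of $E(X)$ (resp.\ $E(Y)$) in $\Db(\rep_{\Bbbk}A_n)$, since $E$ is an equivalence of triangulated categories and hence preserves indecomposability and direct-sum decompositions. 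Once these two identifications are in place, the desired equality
\begin{equation}
d_B^{E,\calA}(B_{\calA}^D(X),B_{\calA}^D(Y)) = d_I^{E,\calA}(X,Y)
\end{equation}
is literally the equality
\begin{equation}
d_B^D(\calB^D(E(X)),\calB^D(E(Y))) = d_I^D(E(X),E(Y)),
\end{equation}
which is Theorem~\ref{thm:DIST} applied to the cochain complexes $E(X)$ and $E(Y)$ in $\Db(\rep_{\Bbbk}A_n)$.

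First I would note that the case where $X,Y$ lie in $\calA$ rather than in $\Db(\calA)$ requires no separate treatment: $\calA$ embeds as a full subcategory of $\Db(\calA)$ via the stalk-complex functor $\xi$, so $X,Y\in\calA$ are in particular objects of $\Db(\calA)$, and both sides of the claimed equality are unchanged under this identification. (In the special case $\calA = \rep_{\Bbbk}A_n(a)$ one moreover has, as already observed just before Theorem~\ref{thm:ASTzigzag}, that $\calB_{\calA}^D(X)$ coincides with the ordinary barcode $\calB(X)$, but this refinement is not needed for the present statement.) Then I would spell out that the ``$\leq$'' direction is exactly the content of Proposition~\ref{prp:ASTdereq}, so that only the reverse inequality ``$\geq$'' remains — and that reverse inequality is supplied by the ``$\geq$'' half of Theorem~\ref{thm:DIST}, which in turn rests on the isometry theorem for ordinary persistence modules (Theorem~\ref{thm:IST}) together with Corollary~\ref{cor:dint}.

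I do not expect any genuine obstacle here: the corollary is a formal consequence of Theorem~\ref{thm:DIST} once one observes that the induced distances $d_I^{E,\calA}$ and $d_B^{E,\calA}$ are \emph{defined} to be the pullbacks along $E$ of $d_I^D$ and $d_B^D$ respectively. The only point deserving a line of justification is that $E$, being an equivalence of triangulated categories, identifies the multiset of indecomposable summands of an object with that of its image, so that the bijection between $\calB^D(E(X))$ and $B_{\calA}^D(X)$ used implicitly in Definition~\ref{dfn:dereqbottle} is well defined and compatible with the matching conditions of Definition~\ref{dfn:dbottleneck}; this is immediate from full faithfulness of $E$ and the fact that both $\Db(\calA)$ and $\Db(\rep_{\Bbbk}A_n)$ are Krull--Schmidt (the latter by \citealt{chen2008algebras}, the former being transported along $E$). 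Assembling these remarks gives the proof in a few lines.
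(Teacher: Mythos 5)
Your proposal is correct and matches the paper's intended argument exactly: the paper states this corollary without a separate proof, noting only that it follows from Theorem~\ref{thm:DIST} together with Definitions~\ref{dfn:dereqint} and \ref{dfn:dereqbottle}, which is precisely the unwinding you carry out. Your extra remarks on the Krull--Schmidt compatibility and the embedding of $\calA$ into $\Db(\calA)$ are sound but not needed beyond what the paper already records.
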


Finally, we can extend Theorem~\ref{thm:ASTzigzag} by Corollary~\ref{cor:ISTdersub} as follows.
\begin{theorem}[Isometry theorem for zigzag] \label{thm:ISTzigzag}
Let $X,Y$ be zigzag persistence modules in the category  $\rep_{\Bbbk} A_n (a)$ with arbitrary orientation $a$.
Then 
\begin{equation}
d_B^{T,a} (\calB(X), \calB(Y)) = d_I^{T,a} (X,Y).
\end{equation}
\end{theorem}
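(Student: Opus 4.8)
The plan is to invoke the machinery already assembled in the paper and simply specialize it. By Proposition~\ref{prp:tiltderived}, choosing a section $\Si$ with $\Si^{\op}\cong A_n(a)$ yields a derived equivalence $\RHom(T(\Si),\blank)$ from $\Db(\rep_{\Bbbk} A_n)$ to $\Db(\rep_{\Bbbk} A_n(a))$; let $E$ denote its quasi-inverse, so $E$ is a derived equivalence from $\Db(\rep_{\Bbbk} A_n(a))$ to $\Db(\rep_{\Bbbk} A_n)$. Taking $\calA=\rep_{\Bbbk} A_n(a)$, which is indeed derived equivalent to $\rep_{\Bbbk} A_n$, I would apply Corollary~\ref{cor:ISTdersub} to the objects $X,Y\in\calA\subseteq\Db(\calA)$. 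This gives
\begin{equation}
d_B^{E,\calA}(\calB_{\calA}^D(X),\calB_{\calA}^D(Y)) = d_I^{E,\calA}(X,Y).
\end{equation}
By the definitions $d_I^{T,a}:=d_I^{E,\calA}$ and $d_B^{T,a}:=d_B^{E,\calA}$ recorded just before Theorem~\ref{thm:ASTzigzag}, the right-hand side is $d_I^{T,a}(X,Y)$ and the left-hand side is $d_B^{T,a}(\calB_{\calA}^D(X),\calB_{\calA}^D(Y))$.

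The only remaining point is to replace $\calB_{\calA}^D(X)$ by the ordinary barcode $\calB(X)$. This is exactly the observation recorded in the sentence preceding Theorem~\ref{thm:ASTzigzag}: for any $X\in\calA=\rep_{\Bbbk} A_n(a)$ viewed as a stalk complex concentrated in degree $0$, the set $\calB_{\calA}^D(X)$ of indecomposable direct summands of $X$ in $\Db(\calA)$ coincides with the ordinary barcode $\calB(X)$, since by Corollary~\ref{cor:dec} the indecomposable summands of the stalk complex $X[0]$ are precisely the $\bbI[b,d][0]$ occurring in the interval decomposition of $X$. Substituting $\calB(X)$ for $\calB_{\calA}^D(X)$ and $\calB(Y)$ for $\calB_{\calA}^D(Y)$ then yields
\begin{equation}
d_B^{T,a}(\calB(X),\calB(Y)) = d_I^{T,a}(X,Y),
\end{equation}
which is the assertion.

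There is essentially no obstacle here: the theorem is a direct corollary of Corollary~\ref{cor:ISTdersub}, itself a packaging of Theorem~\ref{thm:DIST} through the two distances-with-respect-to-$E$ in Definitions~\ref{dfn:dereqint} and \ref{dfn:dereqbottle}. The substantive content was already spent in proving Theorem~\ref{thm:DIST} (where the $\geq$ direction used Theorem~\ref{thm:IST} of Bauer--Lesnick together with Corollary~\ref{cor:dint} to lift pointwise $\de$-interleaving pairs to a derived $\de$-interleaving pair) and in constructing the tilting-module derived equivalence in Proposition~\ref{prp:tiltderived}. Thus the proof is a one-line appeal to Corollary~\ref{cor:ISTdersub} plus the barcode identification, mirroring the proof of Theorem~\ref{thm:ASTzigzag} but using the equality rather than the inequality. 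If anything needs care, it is only making explicit that $X,Y$ are to be regarded as objects of $\Db(\calA)$ via the fully faithful embedding $\xi$, so that Corollary~\ref{cor:ISTdersub} applies verbatim.
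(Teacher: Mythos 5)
Your proposal is correct and matches the paper's own proof, which likewise deduces the theorem directly from Corollary~\ref{cor:ISTdersub} applied to the derived equivalence $\RHom(T(\Si),\blank)$ of Proposition~\ref{prp:tiltderived}. The extra care you take in identifying $\calB_{\calA}^D(X)$ with $\calB(X)$ is exactly the remark the paper records just before Theorem~\ref{thm:ASTzigzag}, so nothing is missing.
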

\begin{proof}
The statement follows from Corollary~\ref{cor:ISTdersub} and the derived equivalence \linebreak $\RHom(T(\Si),\blank) \colon \Db(\rep_{\Bbbk} A_n) \simto \Db(\rep_{\Bbbk} A_n(a))$ in Proposition~\ref{prp:tiltderived}.
\end{proof}

The special case of Theorem~\ref{thm:ISTzigzag} is exactly an isometry theorem for purely zigzag persistence modules.

\section{Direct calculation of the induced distance on $\rep_{\Bbbk} A_n(a)$} \label{sec:Dircal}

We start this section with the following remark.

\begin{remark}
There are actually multiple derived equivalences from $\Db(\rep_{\Bbbk} A_n)$ to $\Db(\rep_{\Bbbk} A_n(a))$. 
For example, 
all translations of a classical tilting module are two-sided tilting complexes, which give derived equivalences (see \citealt{rickard1991derived}).
The induced distance on $\rep_{\Bbbk} A_n (a)$ from $\rep_{\Bbbk} A_n$ depends on the choice of derived equivalences. 
However, in all cases, the isometry theorem holds for $\rep_{\Bbbk} A_n (a)$ by Corollary~\ref{cor:ISTdersub}.
\end{remark}

The purpose of this section is to provide a direct calculation of the induced distance on $\rep_{\Bbbk} A_n(a)$.
Let us fix a classical tilting module $T:=T(\Si)$  given by a section $\Si$ such that $\Si^{\op}\cong A_n(a)$ (see Section~\ref{sec:main}).
Let $X$ be a representation of $A_n(a)$.
By Proposition~\ref{prp:tiltderived}, 
there exists a unique cochain complex $M^{\bullet}\in \Db (\rep_{\Bbbk} A_n)$ up to isomorphism such that $\RHom (T,M^{\bullet})\cong X$. 
This complex $M^{\bullet}$ is called \emph{the corresponding complex of $X$}.
It follows from Theorem~\ref{thm:ISTzigzag} that
the interleaving $d_I^{T,a}$ and bottleneck $d_B^{T,a}$ coincide, and thus we put $d^a:=d_I^{T,a}=d_B^{T,a}$.
By definition, for $X,Y\in \rep_{\Bbbk} A_n(a)$ we have 
\begin{equation}
d^a(X,Y)=d_I^D(M^{\bullet}, N^{\bullet}),
\end{equation}
where $M^{\bullet},N^{\bullet}$ are the corresponding complexes of $X,Y$, respectively (see Definition~\ref{dfn:dinterleave} and Definition~\ref{dfn:dereqint}).
Thus, to calculate $d^a$ on $\rep_{\Bbbk} A_n(a)$, we deal with the corresponding complexes.
For this purpose, we discuss the classical tilting torsion theory (for details, see \citealt[Chapter VI]{assem2006elements}).

Let $\calT$ be the full subcategory of $\rep_{\Bbbk} A_n$ consisting of representations $V$ satisfying $\Ext^1 (T,V)=0$ and $\calF$ the full subcategory of $\rep_{\Bbbk} A_n$ consisting of representations $V$ satisfying $\Hom (T,V) =0$.
Moreover, let $\calX$ be the full subcategory of $\rep_{\Bbbk} A_n (a)$ consisting of representations $V$ satisfying $T \otimes V = 0$ and $\calY$ the full subcategory of $\rep_{\Bbbk} A_n (a)$ consisting of representations $V$ satisfying $\Tor_1 (T,V) = 0$.
It is known that $\calT,\calF,\calX,$ and $\calY$ are closed under extensions.
Namely, for a short exact sequence 
\begin{equation}
0 \to M \to N \to L \to 0, 
\end{equation}
$M,L\in \calT$ (resp.\ $\calF, \calX$, or $\calY$) implies $N\in\calT$ resp.\ $\calF, \calX$, or $\calY$).
\begin{remark}
The pairs $(\calT,\calF)$ and $(\calX,\calY)$ are the so-called \emph{torsion pairs} (for definition, see \citealt[Chapter VI.1, 1.1 Definition]{assem2006elements}).
Moreover, these pairs are \emph{splitting}. 
Namely, for each indecomposable representation $M\in \rep_{\Bbbk} A_n$, 
we have $M\in \calT$ or $M\in \calF$,
and for each indecomposable representation $N\in \rep_{\Bbbk} A_n(a)$, 
we have $N\in \calX$ or $N\in \calY$ (see also \citealt[Chapter VI.5]{assem2006elements}).
\end{remark}

The functor $\Hom(T,\blank)$ gives an equivalence from $\calT$ to $\calY$ and the functor $\Ext^1 (T,\blank)$ gives an equivalence from $\calF$ to $\calX$ (see \citealt[Chapter VI.3, 3.8 Theorem]{assem2006elements}).
Using the right derived functor $\RHom(T,\blank)$, we can restate these results as follows. 

\begin{proposition} \label{prp:tilting}
The derived equivalence $\RHom(T,\blank)$ induces equivalences from $\calT[0]$ to $\calY[0]$ and from $\calF[1]$ to $\calX[0]$.
\end{proposition}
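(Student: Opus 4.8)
The plan is to deduce the two restricted equivalences from the classical tilting--torsion equivalences $\Hom(T,\blank)\colon\calT\simto\calY$ and $\Ext^1(T,\blank)\colon\calF\simto\calX$ recalled just above, using only that $\RHom(T,\blank)$ is a derived equivalence (Proposition~\ref{prp:tiltderived}), that $H^i\circ\RHom(T,\blank)\cong\Ext^i(T,\blank)$, that $\rep_{\Bbbk} A_n$ has global dimension at most $1$, and Lemma~\ref{lem:dec}.

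First I would evaluate $\RHom(T,\blank)$ on a stalk complex $V[0]$ with $V\in\rep_{\Bbbk} A_n$. Since the global dimension is at most $1$, we have $\Ext^i(T,V)=0$ for $i\geq 2$, so $H^i(\RHom(T,V[0]))$ equals $\Hom(T,V)$ for $i=0$, equals $\Ext^1(T,V)$ for $i=1$, and vanishes otherwise. Hence Lemma~\ref{lem:dec} gives a canonical isomorphism
\[
\RHom(T,V[0])\cong \Hom(T,V)[0]\oplus\Ext^1(T,V)[-1]
\]
in $\Db(\rep_{\Bbbk} A_n(a))$. If $V\in\calT$ the second summand vanishes, so $\RHom(T,V[0])\cong\Hom(T,V)[0]$; if $V\in\calF$ the first summand vanishes, so $\RHom(T,V[0])\cong\Ext^1(T,V)[-1]$, and since $\RHom(T,\blank)$ commutes with the translation functor we obtain $\RHom(T,V[1])\cong(\Ext^1(T,V)[-1])[1]\cong\Ext^1(T,V)[0]$.

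Next I would assemble this with the classical theory. By \citet[Chapter VI.3, 3.8 Theorem]{assem2006elements} we have $\Hom(T,V)\in\calY$ for $V\in\calT$ and $\Ext^1(T,V)\in\calX$ for $V\in\calF$, so the two computations above show that $\RHom(T,\blank)$ sends $\calT[0]$ into $\calY[0]$ and $\calF[1]$ into $\calX[0]$. Because $\RHom(T,\blank)$ is an equivalence, its restriction to the full subcategory $\calT[0]$ (resp.\ $\calF[1]$) of $\Db(\rep_{\Bbbk} A_n)$ is automatically fully faithful, so it only remains to check essential surjectivity: given $W\in\calY$ the classical equivalence produces $V\in\calT$ with $\Hom(T,V)\cong W$, whence $\RHom(T,V[0])\cong W[0]$; symmetrically, given $W\in\calX$ there is $V\in\calF$ with $\Ext^1(T,V)\cong W$, whence $\RHom(T,V[1])\cong W[0]$. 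This proves the proposition.

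The only point requiring care is the bookkeeping of the translation functors and the observation that ``induced equivalence'' here means precisely the restriction of $\RHom(T,\blank)$ to the full subcategories $\calT[0]$ and $\calF[1]$, so that full faithfulness is inherited for free; all representation-theoretic content is packaged in the cited tilting--torsion theorem, and the only genuinely derived-categorical inputs are Lemma~\ref{lem:dec} together with the bound on the global dimension of $\rep_{\Bbbk} A_n$.
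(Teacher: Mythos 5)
Your proposal is correct and follows essentially the same route as the paper: identify $H^0(\RHom(T,\blank))$ and $H^1(\RHom(T,\blank))$ with $\Hom(T,\blank)$ and $\Ext^1(T,\blank)$, note the higher cohomologies vanish by the global dimension bound, and invoke the classical tilting--torsion equivalences of \citet[Chapter VI.3, 3.8 Theorem]{assem2006elements}. The paper states this in two lines; you have merely filled in the shift bookkeeping, the reduction of a one-cohomology complex to a stalk complex (via Lemma~\ref{lem:dec}), and the standard fact that a fully faithful functor restricted between full subcategories becomes an equivalence once essential surjectivity is checked.
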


\begin{proof}
Note that we have $H^0 (\RHom(T,\blank))\cong \Hom (T,\blank)$ and 
\begin{equation}H^1 (\RHom(T,\blank))\cong
H^0 (\RHom(T,\blank[1]) \cong \Ext^1 (T,\blank).\end{equation}
Thus, the claim follows from \citet[Chapter VI.3, 3.8 Theorem]{assem2006elements}.
\end{proof}

Proposition~\ref{prp:tilting} states that for a given indecomposable representation $X\in\rep_{\Bbbk} A_n(a)$, 
the corresponding complex $M^{\bullet}$ is a stalk complex $M^{\bullet}=L[0]$ or $L[1]$ for some $L\in \calT$ or $\calF$, respectively.  
The representation $L$ is called the \emph{corresponding representation} of $X$.

Furthermore, with the AR quiver, Proposition~\ref{prp:tilting} can be pictured as in Figure~\ref{fig:tiltingtorsion}, where $F=\Hom(T.\blank), G=\Ext^1(T,\blank),$ and $\RF=\RHom(T,\blank)$. 
\begin{figure}[h]
    \centering
    \includegraphics[width=10cm,height=8cm]{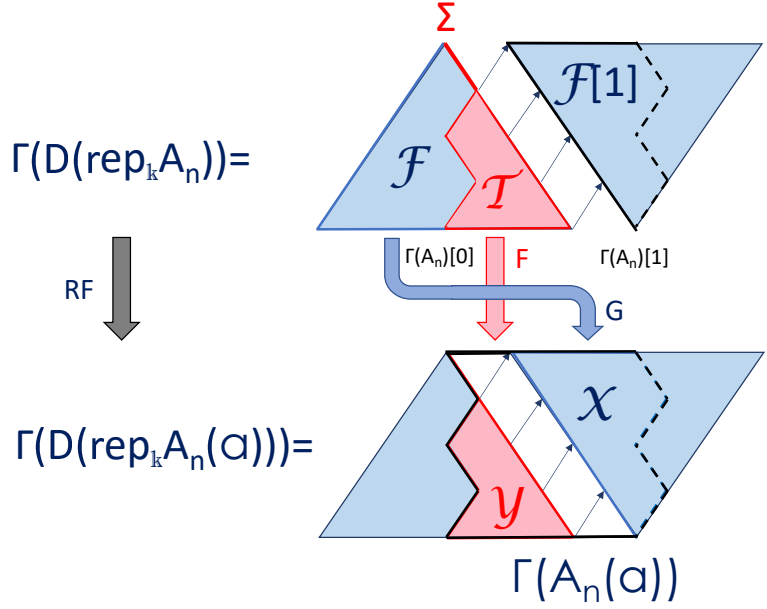}
    \caption{Correspondence between the AR quivers of $\rep_{\Bbbk} A_n$ and $\rep_{\Bbbk} A_n (a)$ in  the derived category}
    \label{fig:tiltingtorsion}
\end{figure}

Now we calculate the derived interleaving distance between two stalk complexes. 

\begin{proposition} \label{prp:calofderint}
Let $M,N$ be representations of $A_n$.
For each pair $i,j$ of integers,
\begin{equation}
d_I^D (M[i],N[j])=
\left\{
\begin{array}{cccc}
d_I (M,N), & i=j \\
\max \{d_I (M,0), d_I(N,0)\}, & i\not= j
\end{array}
\right.
.
\end{equation}
\end{proposition}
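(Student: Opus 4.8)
The plan is to reduce the statement to the cohomological description of derived interleaving provided by Corollary~\ref{cor:dint}, so that the entire computation happens inside $\rep_{\Bbbk} A_n$ where the ordinary interleaving distance lives. First I would record that, for a representation $M$ of $A_n$ and an integer $i$, the stalk complex $M[i]\in\Db(\rep_{\Bbbk} A_n)$ satisfies $H^k(M[i])\cong M$ for $k=-i$ and $H^k(M[i])=0$ for $k\neq -i$ (this is immediate from the definition of the translation functor and of stalk complexes in Definition~\ref{dfn:derived}). In particular $M[i]$ and $N[j]$ carry their only possibly-nonzero cohomology in the single degrees $-i$ and $-j$ respectively, and those degrees coincide exactly when $i=j$.

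Next, by Corollary~\ref{cor:dint}, $M[i]$ and $N[j]$ are derived $\de$-interleaved if and only if $H^k(M[i])$ and $H^k(N[j])$ are $\de$-interleaved for every $k\in\bbZ$. If $i=j$, each such condition is the trivial one ($0$ with $0$) except in degree $-i$, where it reads ``$M$ and $N$ are $\de$-interleaved''; hence $M[i]$ and $N[i]$ are derived $\de$-interleaved for exactly the same values of $\de$ for which $M$ and $N$ are $\de$-interleaved, and taking infima gives $d_I^D(M[i],N[i])=d_I(M,N)$. If $i\neq j$, the nontrivial conditions sit in the two distinct degrees $-i$ and $-j$, reading ``$M$ and $0$ are $\de$-interleaved'' and ``$0$ and $N$ are $\de$-interleaved'' respectively; thus $M[i]$ and $N[j]$ are derived $\de$-interleaved precisely when $M$ is $\de$-interleaved with $0$ and $N$ is $\de$-interleaved with $0$.

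To conclude the case $i\neq j$ I would pass to infima. For any two persistence modules, the set of $\de\in\bbZ_{\geq 0}$ realizing a $\de$-interleaving is upward closed (enlarging $\de$ preserves interleaving, by postcomposing the two interleaving morphisms with transition morphisms) and nonempty (for $\de\geq n$ all relevant shifts of representations of $A_n$ vanish, so the zero morphisms form a $\de$-interleaving pair). Such a set therefore has the form $\{\de\in\bbZ_{\geq 0}\mid \de\geq \de_0\}$ with $\de_0$ its infimum, and the infimum of the intersection of two such sets equals the maximum of their infima. Applying this to the pairs $(M,0)$ and $(N,0)$, and using $d_I(0,N)=d_I(N,0)$, yields $d_I^D(M[i],N[j])=\max\{d_I(M,0),d_I(N,0)\}$.

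The argument is essentially mechanical once Corollary~\ref{cor:dint} is invoked; the only step requiring a genuine, if minor, argument is the last one, namely checking that the infimum of the intersection of the two interleaving-parameter sets equals the maximum of their infima, which rests on the upward-closedness and nonemptiness of those sets.
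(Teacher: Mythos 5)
Your proof is correct and follows essentially the same route as the paper: compute the cohomology of the stalk complexes and apply Corollary~\ref{cor:dint} to reduce to degreewise interleaving of $M$, $N$, and $0$. The only difference is that you spell out the final passage to infima (upward-closedness and nonemptiness of the interleaving-parameter sets), which the paper leaves implicit; that added care is sound and harmless.
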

\begin{proof}
Note that 
\begin{equation}
H^l (M[i])=\left\{
\begin{array}{cc}
M,     & i=-l \\
0,     & i\not=-l
\end{array}
\right.,\quad
H^l (N[j])=\left\{
\begin{array}{cc}
N,     & j=-l \\
0,     & j\not=-l
\end{array}
\right.
.
\end{equation}
Thus, in the case that $i=j$, 
$M[i]$ and $N[j]$ are derived $\de$-interleaved if and only if $M$ and $N$ are $\de$-interleaved by Corollary~\ref{cor:dint}.
Otherwise, $M[i]$ and $N[j]$ are derived $\de$-interleaved if and only if 
$M$ and $0$ are $\de$-interleaved and $0$ and $N$ are $\de$-interleaved by Corollary~\ref{cor:dint}.
\end{proof}

As a consequence, for indecomposable representations, we have the following calculation of the derived interleaving distance.

\begin{corollary} \label{cor:directcaldDI}
Let $M=\bbI[x,y],N=\bbI[s,t]$ be indecomposable representations of $A_n$.
For each pair $i,j$ of integers,
\begin{equation}
d_I^D (M[i],N[j])=
\left\{
\begin{array}{cc}
\min
\left\{
\begin{array}{c}
\max\{|x-s|,|y-t| \},   \\
\max\{ \lceil\frac{1}{2}  |y-x+1|\rceil,\lceil\frac{1}{2}|t-s+1|\rceil\}
\end{array}
\right\}, & i=j \\
\max \{\lceil\frac{1}{2} |y-x+1|\rceil,\lceil\frac{1}{2}|t-s+1|\rceil \}, & i\not= j
\end{array}
\right.
,
\end{equation}
where $\lceil \blank \rceil$ is the ceiling function.
\end{corollary}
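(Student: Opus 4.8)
The plan is to reduce everything to Proposition~\ref{prp:calofderint} and then compute the two-sided interleaving distances $d_I(M,N)$, $d_I(M,0)$, $d_I(N,0)$ for interval modules explicitly. By Proposition~\ref{prp:calofderint}, it suffices to prove two facts about intervals $M=\bbI[x,y]$, $N=\bbI[s,t]$ in $\rep_{\Bbbk} A_n$ (extended to $A_\ell$ for $\ell$ large so that all the shift functors behave): first, that $d_I(\bbI[x,y],0)=\lceil\frac12(y-x+1)\rceil$, and second, that $d_I(\bbI[x,y],\bbI[s,t])=\min\{\max\{|x-s|,|y-t|\},\ \max\{\lceil\frac12(y-x+1)\rceil,\lceil\frac12(t-s+1)\rceil\}\}$. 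Granting these, the $i=j$ case of the corollary is the second formula directly, and the $i\neq j$ case becomes $\max\{d_I(M,0),d_I(N,0)\}=\max\{\lceil\frac12(y-x+1)\rceil,\lceil\frac12(t-s+1)\rceil\}$ by the first formula. So the content is entirely in these two interval computations.

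First I would handle $d_I(\bbI[b,d],0)$. A module $M$ is $\de$-interleaved with $0$ iff there are morphisms $M\to 0$ and $0\to M(\de)$ making the interleaving triangles commute; the only content is that the composite $M\xrightarrow{\ph_M^{2\de}}M(2\de)$ factors through $0$, i.e. $M$ is $2\de$-trivial (the other triangle is automatic). For $M=\bbI[b,d]$ the transition morphism $\ph_M^{2\de}$ is zero precisely when $2\de>d-b$, equivalently $\de\geq\lceil\frac12(d-b+1)\rceil$; hence $d_I(\bbI[b,d],0)=\lceil\frac12(d-b+1)\rceil$. This is a short direct argument once one unwinds Definition~\ref{dfn:interleave}.

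The main work — and the main obstacle — is the interval-vs-interval formula. The upper bound $d_I(\bbI[x,y],\bbI[s,t])\leq\max\{|x-s|,|y-t|\}$ comes from writing down explicit interleaving morphisms when $\de\geq\max\{|x-s|,|y-t|\}$: on the overlapping region one uses identity maps on $\Bbbk$, and the commutativity of all four triangles is a routine check on the supports. The upper bound $d_I(\bbI[x,y],\bbI[s,t])\leq\max\{\lceil\frac12(y-x+1)\rceil,\lceil\frac12(t-s+1)\rceil\}$ is obtained via the zero interleaving pair, using the first part: if both modules are $2\de$-trivial one routes everything through $0$, exactly as in the triangle-inequality argument $d_I(M,N)\leq d_I(M,0)+d_I(0,N)$ refined to the max since a single $\de$ works simultaneously on both sides. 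Taking the minimum of the two upper bounds gives ``$\leq$''. For the reverse inequality ``$\geq$'' one invokes the isometry theorem (Theorem~\ref{thm:IST}), so that $d_I=d_B$ on $\rep_{\Bbbk} A_n$, and then shows that no $\de$-matching between the one-bar barcodes $\{\langle x,y\rangle\}$ and $\{\langle s,t\rangle\}$ exists when $\de$ is below the claimed value: either the single bar of $M$ is matched to the single bar of $N$, which by~\eqref{eq:corrint} forces $|x-s|\leq\de$ and $|y-t|\leq\de$, i.e. $\de\geq\max\{|x-s|,|y-t|\}$; or at least one bar is left unmatched, which by Definition~\ref{dfn:bottleneck} forces that bar to have length $<2\de$, hence $\de\geq\lceil\frac12(y-x+1)\rceil$ or $\de\geq\lceil\frac12(t-s+1)\rceil$ — but since an unmatched bar must lie outside $\calB_{2\de}$, if both bars were unmatched we'd need $\de$ above both, and if one is matched and contributes the first estimate while accounting for the other; carefully, the matching must cover $\calB_{2\de}$ on both sides, so the only ways to have $\de<\max\{|x-s|,|y-t|\}$ are to leave $M$'s bar unmatched (needing $\de\geq\lceil\frac12(y-x+1)\rceil$) or $N$'s bar unmatched (needing $\de\geq\lceil\frac12(t-s+1)\rceil$), giving in all cases $\de\geq\min\{\max\{|x-s|,|y-t|\},\max\{\lceil\frac12(y-x+1)\rceil,\lceil\frac12(t-s+1)\rceil\}\}$.

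The delicate point I expect to need care with is the boundary behavior: the formula should be stated for $M,N$ regarded inside $\rep_{\Bbbk} A_\ell$ for $\ell$ large enough, so that shifting does not truncate the intervals and $r_M^\de$ is bijective — otherwise $d_I(M,N)$ as computed in a finite $A_n$ can differ from the formula near the ends of the quiver. Modulo that standard enlargement (already used in Theorem~\ref{thm:IMT} and Theorem~\ref{thm:AST}), the computation is a finite case analysis. Then Corollary~\ref{cor:directcaldDI} follows by substituting into Proposition~\ref{prp:calofderint}.
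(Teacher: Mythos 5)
Your proposal is correct and follows essentially the same route as the paper: reduce via Proposition~\ref{prp:calofderint} to the ordinary interleaving distances $d_I(M,N)$, $d_I(M,0)$, $d_I(N,0)$ between interval modules and then substitute the explicit interval formulas. The only difference is that the paper simply quotes those formulas without justification, whereas you supply proofs (the $2\de$-triviality computation for $d_I(\bbI[b,d],0)$ and the isometry-theorem/matching argument for the two-interval case), which is a legitimate and non-circular way to fill that gap since Theorem~\ref{thm:IST} is established before this corollary.
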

\begin{proof}
Since 
\begin{equation}
d_I (M,N)=
\left\{
\begin{array}{cc}
\min\left\{
\begin{array}{c}
\max\{|x-s|,|y-t| \},  \\
\max\{ \lceil\frac{1}{2}  |y-x+1|\rceil,\lceil\frac{1}{2}|t-s+1|\rceil\} 
\end{array}
\right\}, & M\not =0, N\not=0 \\
\lceil\frac{1}{2} |y-x+1|\rceil, & N=0 \\
\lceil\frac{1}{2}|t-s+1|\rceil, \ & M=0
\end{array}
\right.,
\end{equation}
we obtain the desired statement by Proposition~\ref{prp:calofderint}.
\end{proof}

By combining Proposition~\ref{prp:tilting} and  Proposition~\ref{prp:calofderint}, we can calculate the distance $d^a$ on $\rep_{\Bbbk} A_n(a)$ from the interleaving distance on $\rep_{\Bbbk} A_n$.

\begin{corollary} \label{cor:calofinddist}
Let $X,Y$ be indecomposable representations of $A_n (a)$ and $M, N$ the corresponding representations of $X,Y$ respectively.
Then 
\begin{equation}
d^a (X,Y)=
\left\{
\begin{array}{cc}
d_I(M,N),  & \text{ if } X,Y\in \calX \text{ or } X,Y\in \calY   \\
\max \{ d_I(M,0),d_I(N,0) \},  & \text{otherwise.} 
\end{array}
\right.
\end{equation}
\end{corollary}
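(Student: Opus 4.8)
The plan is to reduce the statement to the two propositions that have already been established, namely Proposition~\ref{prp:tilting} and Proposition~\ref{prp:calofderint}, together with the definition $d^a(X,Y)=d_I^D(M^\bullet,N^\bullet)$ recorded just before Proposition~\ref{prp:tilting}. First I would recall that, since $X$ is indecomposable in $\rep_{\Bbbk} A_n(a)$ and the torsion pair $(\calX,\calY)$ is splitting, $X$ lies either in $\calX$ or in $\calY$; by Proposition~\ref{prp:tilting} the corresponding complex $M^\bullet$ is therefore a stalk complex, concentrated in degree $0$ when $X\in\calY$ (with $M\in\calT$) and in degree $-1$, i.e.\ $M^\bullet\cong M[1]$, when $X\in\calX$ (with $M\in\calF$). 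Here $M$ is precisely the corresponding representation of $X$ in the sense defined after Proposition~\ref{prp:tilting}, and likewise for $Y$ and $N$. So in the notation of Proposition~\ref{prp:calofderint} we have $M^\bullet\cong M[i]$ and $N^\bullet\cong N[j]$ with $i\in\{0,-1\}$ determined by whether $X\in\calY$ or $X\in\calX$, and similarly for $j$.

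Next I would carry out the case split. If $X,Y\in\calY$ then $i=j=0$; if $X,Y\in\calX$ then $i=j=-1$; in both cases $i=j$, so Proposition~\ref{prp:calofderint} gives $d_I^D(M[i],N[j])=d_I(M,N)$, hence $d^a(X,Y)=d_I(M,N)$. If instead one of $X,Y$ lies in $\calX$ and the other in $\calY$, then $i\ne j$, so Proposition~\ref{prp:calofderint} gives $d_I^D(M[i],N[j])=\max\{d_I(M,0),d_I(N,0)\}$, hence $d^a(X,Y)=\max\{d_I(M,0),d_I(N,0)\}$. Combining the two cases yields exactly the displayed formula. I would write this out as a short paragraph rather than a numbered case analysis, since each case is a one-line invocation of an earlier result.

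The one point that needs a word of care — and is really the only non-formal step — is the identification of the shift index: one must check that $\RHom(T,\blank)$ sends $\calT$ to complexes concentrated in degree $0$ and $\calF$ to complexes concentrated in degree $-1$ (equivalently, a shift of a degree-$0$ complex by $[1]$), and not, say, the other way around. This is immediate from Proposition~\ref{prp:tilting}, which states that $\RHom(T,\blank)$ restricts to an equivalence $\calT[0]\to\calY[0]$ and $\calF[1]\to\calX[0]$, together with the fact that the quasi-inverse equivalence $E$ defining $d^a$ then sends an indecomposable $X\in\calY$ (viewed in $\calY[0]$) to $M[0]$ and an indecomposable $X\in\calX$ to $M[1]$. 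I do not expect any genuine obstacle here; the proof is essentially an assembly of Proposition~\ref{prp:tilting} and Proposition~\ref{prp:calofderint}, and the main thing to get right is bookkeeping of which stalk degree each summand lands in.
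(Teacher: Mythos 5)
Your proposal is correct and is exactly the paper's (unwritten) argument: the corollary is stated there as an immediate consequence of combining Proposition~\ref{prp:tilting} (which places the corresponding complex in $\calT[0]$ or $\calF[1]$ according to whether $X\in\calY$ or $X\in\calX$) with Proposition~\ref{prp:calofderint}. The only blemish is a bookkeeping slip: since $M^\bullet\cong M[1]$ when $X\in\calX$, the shift index in the notation $M[i]$ is $i\in\{0,1\}$, not $\{0,-1\}$ — but this does not affect the case split $i=j$ versus $i\neq j$, so the conclusion stands.
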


By Corollary~\ref{cor:directcaldDI} and  Corollary~\ref{cor:calofinddist}, we can calculate the value $d^a(X,Y)$ concretely when we fix the orientation $a$.

\section{Comparison between the block distance and the induced distance}
\label{sec:comparison}

\citet{botnan2018algebraic}
proved an AST for purely zigzag persistence modules. 
In that paper, they introduced the interleaving and bottleneck distances on purely zigzag persistence modules.
\citet{bjerkevik2016stability} proved that those distances actually coincide.
Here, we refer to the interleaving distance as the \emph{block distance}, denoted by $d_{BL}$, following the paper \citet{meehan2020persistence}. 
 
In this section, based on the calculation in Section~\ref{sec:Dircal}, we will compare our induced distance $d^a$ with the distance $d_{BL}$.
\subsection{The block distance} \label{subsec:bldist}

First, we explain the block distance defined by \citet{botnan2018algebraic}.
For this aim, we will introduce the infinite purely zigzag quiver $\bbZ\bbZ$.

Let $\bbZ$ be the poset of integers with usual order and $\bbZ^{\op}$ its opposite poset.
As in \citet{botnan2018algebraic}, let $\bbZ \bbZ$ be the subposet of the poset $\bbZ^{\op} \times \bbZ$ given by 
\begin{equation}
\bbZ \bbZ:=\{(i,j)\mid i\in\bbZ, j\in\{i,i-1\} \}.
\end{equation}
Note that this can be expressed by the infinite purely zigzag quiver 
\begin{equation}
Q=
\vcenter{
\xymatrix{
& & (i+1,i+1) \ar@{.}[r]& \\
& (i,i) &  (i+1,i)  \ar[l] \ar[u]  \\ 
(i-1,i-1)  \ar@{.}[d]  & (i,i-1) \ar[l] \ar[u] \\
 &
}},
\end{equation}
so that $\bbZ \bbZ$ and $Q$ are identified and a (pointwise finite-dimensional) representation of $\bbZ \bbZ$ is just that of the quiver $Q$.
We use $\rep_{\Bbbk} \bbZ \bbZ$ to denote the category of representations of $\bbZ \bbZ$. 

Moreover, in \citet{botnan2018algebraic}, the intervals $\langle b,d \rangle_{\bbZ\bbZ}$ $(b\leq d)$ of $\bbZ\bbZ$ are classified into the following $4$ types:
\begin{equation}
\left\{
\begin{array}{lc}
\text{closed interval} & [b,d]_{\bbZ\bbZ}:=\{(i,j)\in \bbZ\bbZ \mid (b,b)\leq (i,j) \leq (d,d) \}, \\
\text{right-open interval} & [b,d)_{\bbZ\bbZ}:=\{(i,j)\in \bbZ\bbZ \mid (b,b)\leq (i,j) < (d,d) \}, \\
\text{left-open interval} &
(b,d]_{\bbZ\bbZ}:=\{(i,j)\in \bbZ\bbZ \mid (b,b) < (i,j) \leq (d,d) \}, \\
\text{open interval} & 
(b,d)_{\bbZ\bbZ}:=\{(i,j)\in \bbZ\bbZ \mid (b,b) < (i,j) < (d,d) \}. \\
\end{array}
\right.
\end{equation}

We use $\bbI^{\langle b,d \rangle_{\bbZ\bbZ}}$ to denote the interval representation of $\bbZ\bbZ$ associated with the interval $\langle b,d\rangle_{\bbZ\bbZ}$.
Note that the interval representation $\bbI^{\langle b,d\rangle_{\bbZ\bbZ}}$ of $\bbZ\bbZ$ is uniquely determined by the interval $\langle b,d\rangle_{\bbZ\bbZ}$.
Indeed, $\bbI^{\langle b,d \rangle_{\bbZ\bbZ}}$ is the representation given by 
\begin{equation}
\bbI^{\langle b,d \rangle_{\bbZ\bbZ}}_{(i,j)}
=
\left\{ \begin{array}{cc}
\Bbbk,  & (i,j) \in \langle b,d \rangle_{\bbZ\bbZ} \\
0,     & \text{otherwise}
\end{array}
\right.
\end{equation}
and is called a \emph{closed} (resp.\  \emph{right-open, left-open,} and \emph{open}) interval representation if $\langle b,d \rangle_{\bbZ\bbZ}$ is closed (resp.\ right-open, left-open, and open).
Note that interval representations are indecomposable and every pointwise finite representation of $\bbZ\bbZ$ can be decomposed into interval representations (see \citealt{botnan2017interval}).

The distance $d_{BL}$ is defined via the interleaving distance on 2D persistence modules.
For that purpose, \citet{botnan2018algebraic} defined an embedding functor $J \colon \rep_{\Bbbk} \bbZ\bbZ \to \rep_{\Bbbk} \bbU$, where $\bbU:=\{ (a,b) \in \bbR^2 \mid b \ge a \}$ and $\rep_{\Bbbk} \bbU$ is the category of representations of $\bbU$.
The functor $J$ was originally denoted by $E$ in loc.\ cit.
Note that $\bbU$ is a subposet of $\bbR^{\op} \times \bbR$ and that a poset can be expressed by a quiver $Q'$ with relations in general.
Hence, a representation of $\bbU$ is a representation of the quiver $Q'$ satisfying the condition induced from the relations.
For $\varepsilon \ge 0$, we set $\vec \varepsilon:=(-\varepsilon,\varepsilon) \in \bbR_{\ge 0}^2$ and define a shift functor $[\vec \varepsilon] \colon \rep_{\Bbbk} \bbU \to \rep_{\Bbbk} \bbU$ on objects by $M[\vec \varepsilon]_u:=M_{u+\vec \varepsilon}$ together with natural morphisms.
For $M \in \rep_{\Bbbk} \bbU$ and $\varepsilon \ge 0$, there is a canonical morphism $\phi_M^{\vec \varepsilon} \colon M \to M[\vec \varepsilon]$.

\begin{definition} \label{dfn:BLdist}
(1) For $M, N \in \rep_{\Bbbk} \bbU$ and $\varepsilon \ge 0$, $M$ and $N$ are said to be \emph{$\varepsilon$-interleaved} if there exist morphisms $f \colon M \to N[\vec \varepsilon]$ and $g \colon M \to N[\vec \varepsilon]$ such that 
the following diagrams commute:
\begin{equation}
\xymatrix{
M \ar[rr]^{\phi_M^{2\vec \varepsilon}} \ar[dr]_f && M[2\vec \varepsilon]\\
& N [\vec \varepsilon] \ar[ur]_{g[\vec \varepsilon]} &
},\quad
\xymatrix{
N \ar[rr]^{\phi_N^{2\vec \varepsilon}} \ar[dr]_g && N[2\vec \varepsilon]\\
& M [\vec \varepsilon] \ar[ur]_{f[\vec \varepsilon]} &
}.
\end{equation}
The \emph{interleaving distance on $\rep_{\Bbbk} \bbU$} is defined as 
\begin{equation}
    d_I^{\bbU}(M,N):=\inf \{ \varepsilon \ge 0 \mid \text{$M$ and $N$ are $\varepsilon$-interleaved}\}.
\end{equation}
(2) The \emph{block distance} $d_{BL}$ on $\rep_{\Bbbk} \bbZ\bbZ$ is defined as 
\begin{equation}
    d_{BL}(X,Y)
    := 
    d_I^{\bbU}(J(X),J(Y)).
\end{equation}
\end{definition}

The name of block distance will be justified below in this section.

We recall some properties of $J$, for which we need blocks and block-decomposable representations (see \citealt[Section 3]{botnan2018algebraic} or \citealt[Definition 2.5, Definition 2.10]{berkouk2019level}).

\begin{definition}
    A \emph{block} $B$ is a subset of $\bbR^2$ of the following type:
    \begin{enumerate}
        \item A \emph{birthblock} (\textbf{bb} for short) if there is $(a,b) \in \bbR^2$ such that $B=\langle -\infty,a \rangle \times \langle b,+\infty \rangle$, where $(a,b)$ can be $(+\infty,-\infty)$.
        Moreover, $B$ is said to be of type $\mathbf{bb}^+$ if $b>a$, and $\mathbf{bb}^-$ otherwise.
        \item A \emph{deathblock} (\textbf{db} for short) if there is $(a,b) \in \bbR^2$ such that $B=\langle a, +\infty \rangle \times \langle -\infty, b \rangle$.
        Moreover, $B$ is said to be of type $\mathbf{db}^+$ if $b>a$, and $\mathbf{db}^-$ otherwise.
        \item A \emph{horizontalblock} (\textbf{hb} for short) if there is $a \in \bbR$ and $b \in \bbR\cup\{+\infty\}$ such that $B=\bbR \times \langle a,b \rangle$.
        \item A \emph{verticalblock} (\textbf{vb} for short) if there is $a \in \bbR \cup \{ -\infty\}$ and $b \in \bbR$ such that $B=\langle a,b \rangle \times \bbR$.
    \end{enumerate}
\end{definition}

Each type of block is pictured in \pref{fig:block}.
Note that blocks are defined over the whole $\bbR^2$ not only $\bbU$.
Remark also that the type of blocks does not depend on whether its boundary is included or not.

\begin{figure}[h]
    \centering
    \begin{tikzpicture}
    \fill[cyan=!2] (0.5,1)--(0.5,-0.6)--(-1,-0.6)--(-1,1)--cycle;
    \draw[thick, blue=!30] (0.5,1)--(0.5,-0.6)--(-1,-0.6);
    \fill (0.5,0.5) circle [radius=2pt];
    \fill (-0.6,-0.6) circle [radius=2pt];
    \draw[thick] (0.5,0.5)--(-0.6,-0.6);
    \draw[->] (-1,0) -- (1,0);
    \draw[->] (0,-1) -- (0,1);
    \draw (-1,-1) -- (1,1);
    \node at (0,-1) [below] {$\mathbf{bb}^-$};
    
    \fill[pink=!5] (3-0.5,0.5)--(3+1,0.5)--(3+1,-1)--(3+-0.5,-1)--cycle;
    \draw[thick, red=!30, dashed] (3+-0.5,-1)--(3-0.5,0.5)--(3+1,0.5);
    \fill[yellow=!2] (3-0.5,0.5)--(3-0.5,1)--(3-1,1)--(3-1,0.5)--cycle;
    \draw[thick, orange=!30] (3-0.5,1)--(3-0.5,0.5)--(3-1,0.5);
    \fill (3-0.5,-0.5) circle [radius=2pt];
    \fill (3+0.5,0.5) circle [radius=2pt];
    \draw[thick] (3-0.5,-0.5)--(3+0.5,0.5);
    \draw[->] (3+-1,0)--(3+1,0);
    \draw[->] (3+0,-1)--(3+0,1);
    \draw (3+-1,-1)--(3+1,1);
    \node at (3+0,-1) [below] {$\mathbf{bb}^+$ and $\mathbf{db}^+$};
    
    \fill[lime=!5] (6-1,0.4)--(6+1,0.4)--(6+1,-0.2)--(6-1,-0.2)--cycle;
    \draw[thick, green=!30, dashed] (6-1,0.4)--(6+1,0.4);
    \draw[thick, green=!30] (6-1,-0.2)--(6+1,-0.2);
    \fill (6-0.2,-0.2) circle [radius=2pt];
    \fill (6+0.4,0.4) circle [radius=2pt];
    \draw[thick] (6-0.2,-0.2)--(6+0.4,0.4);
    \draw[->] (6+-1,0)--(6+1,0);
    \draw[->] (6+0,-1)--(6+0,1);
    \draw (6+-1,-1)--(6+1,1);
    \node at (6+0,-1) [below] {$\mathbf{hb}$};
    
    \fill[lime=!5] (9+0.7,1)--(9+0.7,-1)--(9+0.1,-1)--(9+0.1,1)--cycle;
    \draw[thick, green=!30] (9+0.7,1)--(9+0.7,-1);
    \draw[thick, green=!30, dashed] (9+0.1,-1)--(9+0.1,1);
    \fill (9+0.1,0.1) circle [radius=2pt];
    \fill (9+0.7,0.7) circle [radius=2pt];
    \draw[thick] (9+0.1,0.1)--(9+0.7,0.7);
    \draw[->] (9+-1,0)--(9+1,0);
    \draw[->] (9+0,-1)--(9+0,1);
    \draw (9+-1,-1)--(9+1,1);
    \node at (9+0,-1) [below] {$\mathbf{vb}$};
    
    \end{tikzpicture}
    \caption{In the first figure, a block of type $\mathbf{bb}^-$ pictured in blue.
    In the second, a block of type $\mathbf{db}^+$ is pictured in red and $\mathbf{bb}^+$ in yellow.
    In the third (resp.\ last) figure, a block of type $\mathbf{hb}$ (resp.\ $\mathbf{vb}$) is pictured in green.
    Remark that the type of blocks does not depend on whether its boundary lines are included or not.}
    \label{fig:block}
\end{figure}
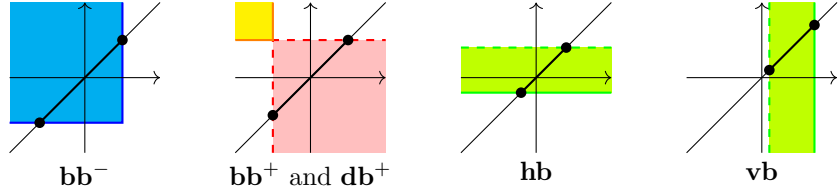

Using the above definition, we define block representations and block-decomposable representations of $\bbU$.
\begin{definition}
    \begin{enumerate}
        \item[(1)] A \emph{block representation $M$ of type $B$} of $\bbU$ is defined by, for $x\leq y \in \bbU$, 
        \begin{equation}
            M(x)=\left\{
            \begin{array}{cc}
            \Bbbk, & x\in \bbU \cap B  \\
            0,     & \text{otherwise}
            \end{array}
            \right.
            \text{ and }
            M(x \leq y)=\left\{
            \begin{array}{cc}
            \id_\Bbbk, & x,y\in \bbU \cap B  \\
            0,     & \text{otherwise}
            \end{array}.
            \right.
        \end{equation}
        Note that any block representation is indecomposable.
        \item[(2)] A representation $M$ of $\bbU$ is called \emph{block-decomposable} if $M$ can be only decomposed into block representations.  
    \end{enumerate}
\end{definition}

Remark that a block representation can be $0$ when the corresponding block is of type $\mathbf{db}^-$.

The functor $J$ sends an object of $\rep_{\Bbbk} \bbZ\bbZ$ to a block-decomposable persistence module.
In fact, each type of interval module is sent as follows.

\begin{lemma}[{\citealt[Lemma 4.1]{botnan2018algebraic}}]\label{lem:send-block}
    The functor $J$ sends closed, open, right-open, and left-open interval representations to block representations of type $\mathbf{bb}^-, \mathbf{db}^+, \mathbf{hb}$, and $\mathbf{vb}$, respectively.
\end{lemma}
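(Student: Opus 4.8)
The plan is to compute $J(\bbI^{\langle b,d\rangle_{\bbZ\bbZ}})$ directly, from the definition of $J$, for each of the four types of interval representation, and to recognize the outcome as the asserted block representation. The statement already concerns only interval representations, so no decomposition step is needed inside the proof; it is this lemma, combined with the additivity of $J$ and the interval decomposition theorem for $\bbZ\bbZ$ (\citealt{botnan2017interval}), that yields the preceding assertion that $J$ carries every object of $\rep_{\Bbbk}\bbZ\bbZ$ to a block-decomposable one.

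First I would recall from \citet{botnan2018algebraic} the explicit description of $J$: for $u\in\bbU$ the space $J(M)_u$, and for $u\le u'$ the transition map $J(M)_{u\le u'}$, are produced from $M$ by a construction that is \emph{local}, in the sense that it depends only on the restriction of $M$ to a sub-diagram of $\bbZ\bbZ$ attached to $u$ (resp.\ to $u$ and $u'$). Specializing this to $M=\bbI^{\langle b,d\rangle_{\bbZ\bbZ}}$, whose spaces are copies of $\Bbbk$ on the convex connected vertex set $\langle b,d\rangle_{\bbZ\bbZ}$ and $0$ elsewhere, with identity maps between adjacent copies of $\Bbbk$, this local construction is immediate to evaluate: the output at $u$ is $\Bbbk$ exactly when the sub-diagram attached to $u$ meets $\langle b,d\rangle_{\bbZ\bbZ}$ and is $0$ otherwise — the convexity and connectedness of $\langle b,d\rangle_{\bbZ\bbZ}$, together with the shape of the sub-diagrams occurring in the definition of $J$, precluding any higher-dimensional stalk — and the transition map $J(\bbI^{\langle b,d\rangle_{\bbZ\bbZ}})_{u\le u'}$ is the identity precisely when both endpoints evaluate to $\Bbbk$, and zero otherwise.

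It then remains to pin down the support region $S=\{u\in\bbU : J(\bbI^{\langle b,d\rangle_{\bbZ\bbZ}})_u\ne 0\}$, by a case analysis on the position of $u=(x,y)$ relative to $\langle b,d\rangle_{\bbZ\bbZ}$. The four cases — closed, open, right-open, left-open — differ only in whether the left end $b$ and the right end $d$ of the zigzag interval are \emph{full} or \emph{punctured} vertices of the fence $\bbZ\bbZ$, and this merely changes, by a half-step, which inequality (strict or non-strict) delimits $S$ in each coordinate direction. One finds that $S=\bbU\cap B$, where $B\subseteq\bbR^2$ is a block whose corner is determined by $b,d$ and whose type is $\mathbf{bb}^-$, $\mathbf{db}^+$, $\mathbf{hb}$, $\mathbf{vb}$ in the closed, open, right-open, left-open case respectively; combined with the computation of the transition maps, this identifies $J(\bbI^{\langle b,d\rangle_{\bbZ\bbZ}})$ with exactly the block representation of that type.

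The main obstacle is the boundary bookkeeping in this last step: one must track carefully which vertices of $\bbZ\bbZ$ enter the sub-diagram attached to a point $u$ lying near an edge or the corner of the prospective block, so as to confirm both that $S=\bbU\cap B$ and that the type of $B$ is the correct one — the pairs $\mathbf{hb}$/$\mathbf{vb}$ and $\mathbf{bb}^-$/$\mathbf{db}^+$ being exactly the ones interchanged when one swaps the two endpoints, or \emph{open} for \emph{closed}. Here it helps that the type of a block is, by definition, insensitive to whether its bounding lines belong to it, so $S$ need only be determined up to its boundary.
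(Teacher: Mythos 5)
This lemma is not proved in the paper at all: it is imported verbatim as Lemma~4.1 of \citet{botnan2018algebraic}, so there is no in-paper argument to compare against. Your strategy --- evaluate $J$ directly on each interval representation, determine the support region in $\bbU$, and read off the block type --- is the natural one and is, in substance, how the original source establishes the result. In that sense the approach is sound.

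As written, however, the proposal is a plan rather than a proof, and the gap is exactly where the content of the lemma lives. First, you never pin down what $J$ is: in \citet{botnan2018algebraic} the functor (there called $E$) is the left Kan extension along the inclusion $\bbZ\bbZ \hookrightarrow \bbU$, so $J(M)_u$ is a colimit over the vertices of $\bbZ\bbZ$ lying below $u$; your ``local sub-diagram attached to $u$'' must be made precise before the claim that the stalk is $\Bbbk$ or $0$ (and never higher-dimensional) can be checked --- for a colimit over a possibly disconnected index poset this is not automatic and is exactly where connectedness of the interval is used. Second, and more importantly, the entire assertion of the lemma is the specific matching closed $\mapsto \mathbf{bb}^-$, open $\mapsto \mathbf{db}^+$, right-open $\mapsto \mathbf{hb}$, left-open $\mapsto \mathbf{vb}$, together with the identification of the corner of the block in terms of $b$ and $d$; your text announces this as the outcome of a case analysis (``One finds that $S=\bbU\cap B$\dots'') but never performs it. Since you correctly flag the ``boundary bookkeeping'' as the main obstacle, the proof is complete only once that bookkeeping is actually done for at least one representative point near each edge and the corner of $B$ in each of the four cases; until then the correspondence of types is asserted, not derived.
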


By \pref{lem:send-block} and \citet[Lemma 3.1]{botnan2018algebraic}, we have the following calculation of $d_{BL}$ for the $4$ kinds of representations above.

\begin{proposition} \label{prp:BL}
Let $\langle b,d \rangle_{\bbZ\bbZ},\langle e,f\rangle_{\bbZ\bbZ}$ be intervals of $\bbZ\bbZ$.
Then the following holds.
\begin{equation}
d_{BL} (\bbI^{\langle b,d\rangle_{\bbZ\bbZ}},0)=
\left\{
\begin{array}{cl}
\infty,     & \langle b,d \rangle_{\bbZ\bbZ} \text{ is closed}
\\
\frac{1}{2}|d-b|, & \langle b,d \rangle_{\bbZ\bbZ} 
\text{ is half-open}
\\
\frac{1}{4}|d-b|, & \langle b,d \rangle_{\bbZ\bbZ}
\text{ is open}
\end{array}.
\right.
\end{equation}
Moreover,  
if $\langle b,d \rangle_{\bbZ\bbZ},\langle e,f \rangle_{\bbZ\bbZ}$ have the same type, then
\begin{equation}
d_{BL} (\bbI^{\langle b,d \rangle_{\bbZ\bbZ}},\bbI^{\langle e,f \rangle_{\bbZ\bbZ}})=
\min\left\{
\begin{array}{c}
\max\{|b-e|,|d-f|\}, \\
\max\{d_{BL} (\bbI^{\langle b,d \rangle_{\bbZ\bbZ}},0),d_{BL} (\bbI^{\langle e,f \rangle_{\bbZ\bbZ}},0)  \} 
\end{array}
\right\}.
\end{equation}
Otherwise, 
\begin{equation}
d_{BL} (\bbI^{\langle b,d \rangle_{\bbZ\bbZ}},\bbI^{\langle e,f \rangle_{\bbZ\bbZ}})=
\max\{d_{BL} (\bbI^{\langle b,d \rangle_{\bbZ\bbZ}},0),d_{BL} (\bbI^{\langle e,f \rangle_{\bbZ\bbZ}},0)  \}.
\end{equation}
\end{proposition}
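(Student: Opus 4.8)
The plan is to push everything through the embedding functor $J$ and reduce all three displayed formulas to a computation of interleaving distances between block representations of $\bbU$, which is exactly the content of \citet[Lemma 3.1]{botnan2018algebraic}. First I would use \pref{lem:send-block} to record, for each of the four interval types, the precise block representation $J(\bbI^{\langle b,d\rangle_{\bbZ\bbZ}})$ of $\bbU$ that it is sent to --- a block of type $\mathbf{bb}^-$, $\mathbf{db}^+$, $\mathbf{hb}$, or $\mathbf{vb}$ --- together with the dictionary translating the endpoints $b,d$ into the corner data of that block. Since $d_{BL}(X,Y):=d_I^{\bbU}(J(X),J(Y))$ by \pref{dfn:BLdist} and $J(0)=0$, each assertion of the proposition becomes a statement purely about $d_I^{\bbU}$ between block representations.

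For the distance to $0$ I would argue by type. If $\langle b,d\rangle_{\bbZ\bbZ}$ is closed, then $J(\bbI^{\langle b,d\rangle_{\bbZ\bbZ}})$ is a $\mathbf{bb}^-$-block, whose support in $\bbU$ is unbounded towards the upper-left; since the shift $[\vec\varepsilon]$ only translates by $(-\varepsilon,\varepsilon)$, the transition morphism $\phi^{2\vec\varepsilon}$ on such a block is nonzero for every $\varepsilon\ge 0$, so it is never $\varepsilon$-interleaved with $0$ and $d_{BL}=\infty$. If $\langle b,d\rangle_{\bbZ\bbZ}$ is half-open, the block is of type $\mathbf{hb}$ (right-open) or $\mathbf{vb}$ (left-open): its support is an infinite band of width $|d-b|$ along the relevant axis, and one checks that $\phi^{2\vec\varepsilon}$ vanishes precisely when $2\varepsilon\ge|d-b|$, i.e. $d_{BL}=\frac12|d-b|$. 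If $\langle b,d\rangle_{\bbZ\bbZ}$ is open, the block is a $\mathbf{db}^+$-block whose support in $\bbU$ is a right-triangular region with legs $|d-b|$; here the anti-diagonal shift erodes the region from \emph{two} faces simultaneously, so $\phi^{2\vec\varepsilon}$ vanishes only when $4\varepsilon\ge|d-b|$, giving $d_{BL}=\frac14|d-b|$. Each of these is the single-block case of \citet[Lemma 3.1]{botnan2018algebraic}.

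For the pairwise formulas, suppose first that $\langle b,d\rangle_{\bbZ\bbZ}$ and $\langle e,f\rangle_{\bbZ\bbZ}$ have the same type. Then their images are blocks of the same shape differing only by a translation of their corner point(s), and $d_I^{\bbU}$ is the minimum of two competing strategies: ``slide one block onto the other'', which after the endpoint-to-corner dictionary costs $\max\{|b-e|,|d-f|\}$, and ``kill both blocks through $0$'', which costs $\max\{d_{BL}(\bbI^{\langle b,d\rangle_{\bbZ\bbZ}},0),d_{BL}(\bbI^{\langle e,f\rangle_{\bbZ\bbZ}},0)\}$; the $\min$ of these two is the claimed formula. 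If the types differ, no nonzero morphism between a shift of one block and the other is compatible with the two interleaving triangles, so the only possible interleaving factors through $0$, forcing both transition morphisms to vanish; hence $d_I^{\bbU}$ equals $\max$ of the two distances to $0$. All of this is a direct invocation of \citet[Lemma 3.1]{botnan2018algebraic}.

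The main obstacle I anticipate is the coordinate bookkeeping under $J$: one must verify that the endpoints $b,d\in\bbZ\bbZ$ translate to the corner data of the block so that the constants come out exactly right --- in particular that the $\frac14$ for open intervals and $\frac12$ for half-open intervals are not off by a factor of two, and that $\max\{|b-e|,|d-f|\}$ (rather than, say, $\frac12\max\{|b-e|,|d-f|\}$) is the correct sliding cost in the same-type case. One also has to check the ``different types force interleaving through $0$'' claim against every unordered pair of the four block types. Once the dictionary is pinned down, the rest is mechanical substitution into \citet[Lemma 3.1]{botnan2018algebraic} and \pref{lem:send-block}.
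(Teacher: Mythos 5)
Your proposal is correct and follows essentially the same route as the paper, which likewise derives the proposition directly from Lemma~\ref{lem:send-block} together with \citet[Lemma 3.1]{botnan2018algebraic} applied to the block representations $J(\bbI^{\langle b,d\rangle_{\bbZ\bbZ}})$. Your additional geometric verifications of the constants ($\infty$, $\tfrac12$, $\tfrac14$) for each block type are consistent with that lemma and only make explicit what the paper leaves to the citation.
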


\subsection{Comparison in purely zigzag setting} \label{subsec:comparison}

In this subsection, we will directly compare our induced distance $d^a$ with the block distance $d_{BL}$ in the purely zigzag setting.
Now we consider the quiver $A_n(z_1)$ where $n$ is odd. 
In this case, we denote the induced distance by $d^{z_1}$ instead of $d^a$ (see Section~\ref{sec:Dircal}).

For this aim we consider an injection $\mu_1 \colon A_n(z_1)\to \bbZ\bbZ$ defined by
\begin{equation}
\mu_1(x)=
\left\{
\begin{array}{ll}
(i,i), & x=2i-1 \\
(i+1,i), & x=2i 
\end{array}.
\right.
\end{equation}
The injection $\mu_1$ induces a fully faithful functor $\widetilde{\mu_1} \colon \rep_{\Bbbk} A_n (z_1) \to \rep_{\Bbbk} \bbZ \bbZ$ defined by
\begin{equation}
\widetilde{\mu_1}(X)(i,j)=
\left\{
\begin{array}{cc}
X(\mu_1(x)), & (i,j)=\mu_1(x) \\
0,     & \text{otherwise}
\end{array}
\right.
\end{equation}
for each representation $X$ of $A_n (z_1)$.
Then the functor $\widetilde{\mu_1}$ sends interval representations of $A_n(z_1)$ to those of $\bbZ  \bbZ$.
More precisely, we have the following result.
\begin{proposition} \label{prp:division}
    For any interval representation $\bbI[s,t]$ of $A_n(z_1)$, 
    \begin{enumerate}
        \item $\bbI[s,t]\in \calY$ if and only if $\widetilde{\mu_1}(\bbI[s,t])$ is closed or right-open, 
        \item $\bbI[s,t]\in \calX$ if and only if $\widetilde{\mu_1}(\bbI[s,t])$ is open or left-open.
    \end{enumerate}
\end{proposition}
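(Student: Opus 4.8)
The plan is to reduce both assertions to the single statement that an interval representation $\bbI[s,t]$ of $A_n(z_1)$ lies in $\calY$ if and only if its left endpoint $s$ is odd, equivalently $s$ is a sink of $A_n(z_1)$. Granting this, assertion (2) comes for free: the torsion pair $(\calX,\calY)$ is splitting, so $\bbI[s,t]$ lies in exactly one of $\calX$ and $\calY$; on the other hand, every interval representation of $\bbZ\bbZ$ is of exactly one of the four types, and I shall show that $\widetilde{\mu_1}(\bbI[s,t])$ is closed or right-open precisely when $s$ is odd and open or left-open precisely when $s$ is even. Hence $\bbI[s,t]\in\calX$ iff $\bbI[s,t]\notin\calY$ iff $s$ is even iff $\widetilde{\mu_1}(\bbI[s,t])$ is open or left-open, which is (2), while (1) amounts to the equivalence $\bbI[s,t]\in\calY\iff s\text{ odd}$ combined with that same type computation.

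First I would settle the type computation, which is just an unwinding of definitions. By construction $\mu_1$ carries an odd vertex $2i-1$ of $A_n(z_1)$, which is a sink, to the diagonal point $(i,i)$ of $\bbZ\bbZ$, which is again a sink, and an even vertex $2i$, a source, to the off-diagonal point $(i+1,i)$, a source of $\bbZ\bbZ$. Since $\widetilde{\mu_1}(\bbI[s,t])$ is the interval representation of $\bbZ\bbZ$ supported on the connected segment $\{\mu_1(x)\mid s\le x\le t\}$, its type is determined by whether its two extreme vertices $\mu_1(s)$ and $\mu_1(t)$ are diagonal or off-diagonal: the cases (diagonal, diagonal), (diagonal, off-diagonal), (off-diagonal, diagonal), (off-diagonal, off-diagonal) produce respectively a closed, a right-open, a left-open, and an open interval representation. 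In particular the type depends only on the parities of $s$ and $t$, and ``closed or right-open'' means exactly ``$s$ odd''. The five intervals $\bbI[1,1],\bbI[1,2],\bbI[2,2],\bbI[2,3],\bbI[1,3]$ of $A_3(z_1)$ already exhibit all four cases and are consistent with Lemma~\ref{lem:send-block}.

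The substance of the proof is the equivalence $\bbI[s,t]\in\calY\iff s\text{ odd}$. For this I would use the Brenner--Butler picture recalled in Proposition~\ref{prp:tilting}: the functors $F=\Hom(T,\blank)$ and $G=\Ext^1(T,\blank)$ restrict to equivalences $\calT\xrightarrow{\sim}\calY$ and $\calF\xrightarrow{\sim}\calX$, where $\calT$ is the additive closure of the indecomposable $A_n$-representations lying on or to the right of the complete slice $\Sigma$ in $\AR(A_n)$, and $\calF$ of those strictly to its left. Thus $\bbI[s,t]\in\calY$ precisely when $\bbI[s,t]$ lies in the image $F(\calT)$. I would then write the section $\Sigma$ with $\Sigma^{\op}\cong A_n(z_1)$ out as an explicit direct sum of interval representations of $A_n$ (for $n=3$ it is $\bbI[1,3]\oplus\bbI[1,2]\oplus\bbI[2,2]$), read off $\calT$ and $\calF$ from $\AR(A_n)$, and compute on every indecomposable $M$ the dimension vectors $\dim F(M)_i=\dim\Hom(X_i,M)$ and $\dim G(M)_i=\dim\Ext^1(X_i,M)$ by means of the standard Hom and Ext formulas for interval representations over equioriented $A_n$. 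The outcome to be checked is that $F(\calT)=\{\bbI[s,t]\mid s\text{ odd}\}$ and $G(\calF)=\{\bbI[s,t]\mid s\text{ even}\}$; as a first sanity check, the summands $X_i$ of $T$ are sent to the indecomposable projective $A_n(z_1)$-modules, all of which have odd left endpoint. This is the information already encoded in Figures~\ref{fig:tiltingtorsion} and \ref{fig:zigzag}, in which the degree-$0$ part of the embedded AR quiver of $A_n(z_1)$ consists of the intervals with odd left endpoint.

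The step I expect to be the main obstacle is exactly this last one: carrying out the identification of $T(\Sigma)$, of the torsion classes $\calT$ and $\calF$, and of the images $F(\calT)$ and $G(\calF)$ in a form valid for all (odd) $n$ — a finite but somewhat delicate bookkeeping. Everything surrounding it, namely the reduction of (2) to (1) via the splitting of $(\calX,\calY)$ and the translation of ``odd left endpoint'' into ``closed or right-open'' through $\mu_1$, is purely formal.
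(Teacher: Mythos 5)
Your framework matches the paper's: the translation of ``closed or right-open'' into ``$s$ odd'' via the parity/sink--source behaviour of $\mu_1$ is exactly how the paper deduces the proposition from its Lemma~\ref{lem:division}, and the reduction of (2) to (1) via the splitting of $(\calX,\calY)$ is also the paper's move. The problem is that the one substantive claim --- $\bbI[s,t]\in\calY$ if and only if $s$ is odd --- is not proved in your proposal; it is deferred to a ``finite but somewhat delicate bookkeeping'' with Hom/Ext dimension formulas over all indecomposables, and you yourself identify this as the step you cannot carry out uniformly in $n$. So as written the proposal establishes only the formal half of the proposition; the hard half remains a plan. (Your plan is not wrong: computing $F=\Hom(T,\blank)$ and $G=\Ext^1(T,\blank)$ on every indecomposable would work and would even give both implications at once. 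But it is not done.)

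The idea you are missing is the paper's Lemma~\ref{lem:division}, which bypasses the bookkeeping entirely by using that $\calY$ is closed under extensions. Concretely: (i) every indecomposable projective $Q(s)=\Hom(T,X_s)$ of $A_n(z_1)$ lies in $\calY$, and $Q(s)\cong\bbI[s,s]$ exactly when $s$ is odd; (ii) $\bbI[1,t]\in\calY$ for every $t$, because $X_1=P_1$ is a projective--injective direct summand of $T$, so $\Hom(X_1,Y_{1,t})\cong\Hom(Q(1),\bbI[1,t])\neq 0$ forces the corresponding complex $Y_{1,t}$ to lie in $\calT[0]$ rather than $\calF[1]$ (a nonzero $\Hom$ from a projective into a complex concentrated in degree $-1$ is impossible), whence $\bbI[1,t]\in\calY$ by Proposition~\ref{prp:tilting}; (iii) for odd $s>1$ the short exact sequence
\begin{equation}
0 \to \bbI[s,s] \to \bbI[s,t]\oplus\bbI[1,s] \to \bbI[1,t] \to 0
\end{equation}
in $\rep_{\Bbbk} A_n(z_1)$ has outer terms in $\calY$ by (i) and (ii), so the middle term, hence $\bbI[s,t]$, lies in $\calY$. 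One further caveat applies to both your reduction and the paper's terse deduction of the even case: ``$s$ even $\Rightarrow\bbI[s,t]\in\calX$'' does not follow from ``$s$ odd $\Rightarrow\bbI[s,t]\in\calY$'' and splitting alone; one still needs the converse direction, e.g.\ by counting that the number of intervals with odd left endpoint equals the number of indecomposables of $\calT$, or by a dual argument. If you keep your route, you must actually exhibit $T(\Si)$ and verify $F(\calT)=\{\bbI[s,t]\mid s\ \text{odd}\}$ and $G(\calF)=\{\bbI[s,t]\mid s\ \text{even}\}$; otherwise adopt the extension-closure argument above.
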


To prove this proposition, we need the following lemma.

\begin{lemma} \label{lem:division}
For any interval representation $\bbI[s,t]$ of $A_n(z_1)$,
\begin{enumerate}
    \item $\bbI[s,t]\in \calY$ if $s$ is odd, 
    \item $\bbI[s,t]\in \calX$  otherwise. 
\end{enumerate}
\end{lemma}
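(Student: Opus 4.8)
The plan is to decide, for each interval representation $\bbI[s,t]$ of $A_n(z_1)$, which of the two functors $T\otimes_B\blank$ and $\Tor_1^B(T,\blank)$ annihilates it, where $B=\End(T)^{\op}$ is (Morita equivalent to) $\Bbbk A_n(z_1)$; since the torsion pair $(\calX,\calY)$ is splitting, the indecomposable $\bbI[s,t]$ lies in exactly one of $\calX,\calY$, so it is enough to pin down the sign of a single integer. Concretely, I would take a minimal projective $B$-resolution $0\to Q_1\to Q_0\to\bbI[s,t]\to 0$ (finite, since $\rep_{\Bbbk}A_n(z_1)$ is hereditary), apply $T\otimes_B\blank$, and read off $\Tor_1^B(T,\bbI[s,t])=\ker\phi$ and $T\otimes_B\bbI[s,t]=\Cok\phi$ from the induced map $\phi\colon T\otimes_BQ_1\to T\otimes_BQ_0$. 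Because $\bbI[s,t]\neq 0$, exactly one of $\ker\phi,\Cok\phi$ vanishes, and which one is then forced by comparing $\dim(T\otimes_BQ_1)$ with $\dim(T\otimes_BQ_0)$.

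The first thing to nail down is the tilting module itself. Starting from $X_1=P_1=\bbI[1,n]$ in $\rep_{\Bbbk}A_n$, the condition $\Si^{\op}\cong A_n(z_1)$ — together with the fact that in $\AR(A_n)$ every indecomposable has at most two immediate neighbours along the section, realized by a monomorphism with simple cokernel and by an epimorphism with simple kernel — forces each next vertex $X_x$ of $\Si$ to be the unique neighbour of $X_{x-1}$ not yet used; this determines $\Si$ uniquely, and since every such step shrinks the dimension by one, it gives $\dim_{\Bbbk}X_x=n+1-x$ for all $x$. One also records that $\Hom(T,X_x)\cong P_x^B$ and $T\otimes_BP_x^B\cong X_x$, and that the indecomposable projective $B$-modules are $P_x^B=\bbI[x,x]$ for $x$ odd (a sink of $A_n(z_1)$) and $P_x^B=\bbI[x-1,x+1]$ for $x$ even (a source).

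Next I would make the resolution explicit. For $s<t$ the top of $\bbI[s,t]$ is $\bigoplus_{x\in[s,t]\ \mathrm{even}}S_x$, so $Q_0=\bigoplus_{x\in[s,t]\ \mathrm{even}}P_x^B$, and a vertex-by-vertex inspection of $\ker(Q_0\twoheadrightarrow\bbI[s,t])$ shows it is supported only on odd vertices — the internal overlap vertices of consecutive summands, together with $s-1$ when $s$ is even and $t+1$ when $t$ is even — so $Q_1=\bigoplus_{y\in Y}P_y^B$ for an explicit finite set $Y$ of odd vertices (the case $s=t$ is immediate: $S_s$ is projective when $s$ is odd, and for $s$ even it is covered by the same computation with $Q_0=P_s^B$, $Q_1=P_{s-1}^B\oplus P_{s+1}^B$). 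Applying $T\otimes_B\blank$ and using $\dim X_x=n+1-x$, one gets
\begin{equation}
\dim(T\otimes_BQ_0)-\dim(T\otimes_BQ_1)=\sum_{x\in[s,t]\ \mathrm{even}}(n+1-x)-\sum_{y\in Y}(n+1-y),
\end{equation}
and in each of the four parity cases for $(s,t)$ the two index sets pair up into consecutive pairs $(y,y{+}1)$ — each contributing $+1$ — plus at most one unpaired term, so that (using $t\le n$) the right-hand side is strictly positive when $s$ is odd and strictly negative when $s$ is even.

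Putting this together: when $s$ is odd, $\dim(T\otimes_BQ_1)<\dim(T\otimes_BQ_0)$, so $\phi$ cannot be surjective, hence $\Cok\phi=0$ is impossible and $\ker\phi=0$; thus $\Tor_1^B(T,\bbI[s,t])=0$ and $\bbI[s,t]\in\calY$. When $s$ is even the inequality reverses, $\phi$ cannot be injective, so $\Cok\phi=0$, giving $T\otimes_B\bbI[s,t]=0$ and $\bbI[s,t]\in\calX$. The step I expect to be the main obstacle is the second paragraph — establishing $\dim X_x=n+1-x$, which amounts to locating the section $\Si$ precisely inside $\AR(A_n)$ and genuinely uses the orientation $\Si^{\op}\cong A_n(z_1)$; once that and the shape of $Q_1$ are in place, the remaining bookkeeping is routine.
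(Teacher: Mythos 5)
Your proposal is correct, but it takes a genuinely different route from the paper's proof. The paper first shows $\bbI[1,t]\in\calY$ for every $t$ by noting that $\Hom(Q(1),\bbI[1,t])\neq 0$ with $Q(1)=\Hom(T,X_1)$ and $X_1=P_1$ projective--injective, which forces the corresponding complex to lie in $\calT[0]$; it then propagates this to all odd $s$ via the short exact sequence $0\to\bbI[s,s]\to\bbI[s,t]\oplus\bbI[1,s]\to\bbI[1,t]\to 0$ and the closure of $\calY$ under extensions and direct summands, and gets (2) from (1) by splitting. You instead compute the minimal projective $B$-resolution $0\to Q_1\to Q_0\to\bbI[s,t]\to 0$ explicitly and decide membership from the sign of $\dim(T\otimes_BQ_0)-\dim(T\otimes_BQ_1)$. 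I checked your identification of $Q_0$ (projectives at the even vertices of $[s,t]$), of the index set $Y$ for $Q_1$, and all four parity cases of the dimension count; they are correct (e.g.\ for $s,t$ both odd the difference is $(n+1-t)+\tfrac{1}{2}(t-s)>0$, for $s,t$ both even it is $t-n-1-\tfrac{1}{2}(t-s)<0$), and both arguments ultimately invoke splitting in the same way to convert ``one of $T\otimes M$, $\Tor_1(T,M)$ is nonzero'' into the final placement. Your route costs more bookkeeping --- in particular pinning down the section, though $\dim X_x=n+1-x$ follows at once from $X_x\in O(P_x)$, since that $\tau$-orbit consists exactly of the intervals of dimension $n+1-x$ --- but it buys a symmetric, directly verifiable treatment of (1) and (2) and produces the explicit resolutions that are implicitly needed for Lemma~\ref{lem:corresponding}; the paper's route is shorter and never has to compute $Q_1$.
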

\begin{proof}
For any pair $1\leq s \leq t \leq n$, 
there is an indecomposable stalk complex $Y_{s,t}$ in $\calT[0]$ or in $\calF [1]$  of $A_n$ such that $\RHom (T,Y_{s,t}) \cong \bbI[s,t]$ by Proposition~\ref{prp:tilting}.
Let $Q(s)=\Hom (T,X_s)\in \calY$ be the indecomposable projective representation of $A_n(z_1)$ corresponding to the vertex $1\leq s\leq n$. 
Note that $Q(s) \cong \bbI[s,s]$ if $s$ is odd and $Q(s)\cong \bbI[s-1,s+1]$ otherwise.
Then we have $\Hom (X_1,Y_{1,t}) \cong \Hom (Q(1),\bbI[1,t])\not=0$ for any $t$.
Since $X_1=P(1)$ is projective-injective and a direct summand of $T$, 
$\Hom (X_1,Y_{1,t}) \not = 0$
implies that $Y_{1,t}\in \calT[0]$ and hence $\bbI[1,t]\in \calY$ by Proposition~\ref{prp:tilting}.
For any odd integer $s> 1$, we have an exact sequence
\begin{equation}
0 \to \bbI[s,s] \to \bbI[s,t] \oplus \bbI[1,s] \to \bbI[1,t] \to 0.
\end{equation}
Since $\calY$ is closed under extensions, 
$\bbI[s,s],\bbI[1,t]\in \calY$ implies $\bbI[s,t]\in \calY$.

Statement (b) then follows from (a) and the fact that the torsion pair $(\calX,\calY)$ is splitting. 
\end{proof}

\begin{proof}[{Proof of Proposition~\ref{prp:division}}]
$\bbI[s,t]\in \calY$ if and only if $s$ is odd.
In this case, $\mu_1(s)=(m,m)$ with $s=2m-1$.
Then, $s$ is odd if and only if  $\widetilde{\mu_1}(\bbI[s,t])$ is closed or right-open, as desired.
\end{proof}

\begin{remark}
We have the same result of Proposition~\ref{prp:division} in the case that $n$ is even. 
Moreover, we can also consider an injection $\mu_2 \colon A_n(z_2)\to \bbZ\bbZ$ defined by 
\begin{equation}
\mu_2(x)=
\left\{
\begin{array}{ll}
(i,i), & x=2i \\
(i+1,i), & x=2i-1 
\end{array}.
\right.
\end{equation}
Then we obtain the similar results of Proposition~\ref{prp:division} in the case of $A_n(z_2)$.  

Thus, Proposition~\ref{prp:division} and the similar results tell us that 
the AR quiver of $A_n(z_l)$ $(l=1,2)$ can be divided into $2$ areas consisting of $4$ kinds of intervals in the sense of \citet{botnan2018algebraic} with respect to classical tilting torsion theory.
\end{remark}

By Proposition~\ref{prp:division}, the interval representations $\bbI[s,t]$ of $A_n(z_1)$ can be classified into $4$ kinds of representations $\bbI^{\langle b,d \rangle_{\bbZ\bbZ}}$.
More precisely, we have the following correspondence between $(s,t)$ and $(b,d)$:
\begin{equation}
\left\{
\begin{array}{lcl}
\text{closed interval} & [b,d]_{\bbZ\bbZ} & (s=2b-1,t=2d-1),\\
\text{right-open interval} & [b,d)_{\bbZ\bbZ} & (s=2b-1,t=2d-2),\\
\text{left-open interval} &(b,d]_{\bbZ\bbZ} & (s=2b,t=2d-1),\\
\text{open interval} & (b,d)_{\bbZ\bbZ} & (s=2b,t=2d-2).
\end{array}
\right.
\end{equation}
Since $1\leq s \leq t \leq n$, we have $1\leq b \leq d \leq \lceil\frac{n}{2}\rceil$.
In this setting, by Proposition~\ref{prp:division}, $\bbI[s,t]\in \calY$ if and only if $\widetilde{\mu_1}(\bbI[s,t])$ is closed or right-open.
We use $\calY_c,\calY_{co}$ to denote the sets of interval representations $\bbI[s,t]\in \calY$ which correspond to closed or right-open interval representations of $\bbZ\bbZ$, respectively.
Similarly, we use $\calX_o,\calX_{oc}$ to denote the sets of interval representations $\bbI[s,t]\in \calX$ which correspond to open or left-open interval representations of $\bbZ\bbZ$, respectively.

From the proof of Proposition~\ref{prp:division}, we recall that  
$s$ is odd if and only if $\widetilde{\mu_1}(\bbI[s,t])$ is closed or right-open, and that
$t$ is odd if and only if $\widetilde{\mu_1}(\bbI[s,t])$ is closed or left-open. 
Let $I$ be an interval representation of $A_n(z_1)$, and set \begin{equation}S_I:=\{s=1,\cdots,n \mid \Hom (\bbI[s,s], I)\not = 0 \text{ or } \Hom (I, \bbI[s,s])\not = 0 \}.\end{equation}
In this case, $\bbI[s,s]$ is simple projective if $s$ is odd, and is simple injective otherwise.  
Consequently, $\Hom (I, \bbI[s,s])= 0$ if $s$ is odd, and $\Hom (\bbI[s,s], I) = 0$ otherwise.
Thus, we have $I=\bbI[s,t]$ with $s:=\min S_I$ and $t:=\max S_I$.
Note that simple projective representations are source vertices and simple injective representations are sink vertices in the AR quiver (see \citealt[Chapter IV.3, 3.6 Corollary]{assem2006elements}).
Then $\calY_c,\calY_{co}, \calX_{o},$ and $\calX_{oc}$ can be expressed in the AR quiver $\AR(A_n(z_1))$ of $A_n(z_1)$ as in Figure~\ref{fig:fig5}.

\begin{figure}[h!]
    \centering
    \includegraphics[width=7.5cm,height=5cm]{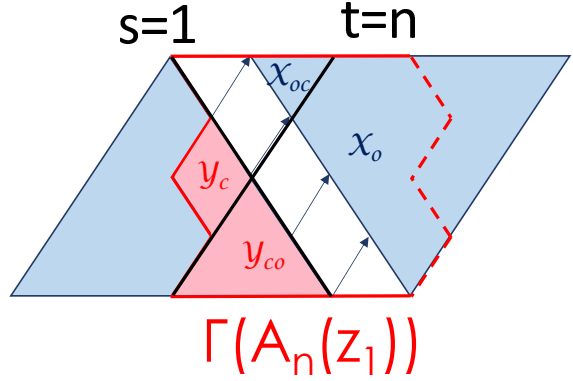}
    \caption{Division of the AR quiver $\AR(A_n(z_1))$ of $A_n(z_1)$}
    \label{fig:fig5}
\end{figure}

It is noteworthy that \citet{meehan2020persistence} give the same division of the AR quiver of purely zigzag persistence modules as our model.

The division of the AR quiver gives us the following correspondence between the interval representation of $A_n$ and $\bbZ\bbZ$.

\begin{lemma} \label{lem:corresponding}
Let $\bbI[s,t]$ be an interval representation of $A_n(z_1)$.
Then for the interval representation $\bbI^{\langle b,d\rangle_{\bbZ\bbZ}}:=\widetilde{\mu_1}(\bbI[s,t])$ of $\bbZ\bbZ$,
we have the corresponding representation $\bbI[x,y]\in \rep_{\Bbbk} A_n$ of $\bbI[s,t]$,  
where $(x,y)$ is given by the following:
\begin{equation}
\left\{
\begin{array}{lcl}
(x,y)=(b,n-d+1) & \langle b,d\rangle_{\bbZ\bbZ}=[b,d]_{\bbZ\bbZ} & (s=2b-1,t=2d-1),\\
(x,y)= (b,d-1) & \langle b,d\rangle_{\bbZ\bbZ} = [b,d)_{\bbZ\bbZ} & (s=2b-1,t=2d-2),\\
(x,y)=(n-d+2,n-b+1) & \langle b,d\rangle_{\bbZ\bbZ}=(b,d]_{\bbZ\bbZ} & (s=2b,t=2d-1),\\
(x,y)=(d,n-b+1) & \langle b,d\rangle_{\bbZ\bbZ}=(b,d)_{\bbZ\bbZ} & (s=2b,t=2d-2).
\end{array}
\right.
\end{equation}
\end{lemma}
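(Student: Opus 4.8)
The plan is to run a case analysis over the four types of interval representations $\bbI[s,t]$ of $A_n(z_1)$ distinguished in Proposition~\ref{prp:division}, producing in each case the corresponding representation $\bbI[x,y]\in\rep_{\Bbbk} A_n$ explicitly. First I would recall what ``corresponding representation'' means here via Proposition~\ref{prp:tilting}: when $s$ is odd (so $\bbI[s,t]\in\calY$, cases (1) and (2)) it is the unique $L\in\calT$ with $\Hom(T,L)\cong\bbI[s,t]$, and when $s$ is even (so $\bbI[s,t]\in\calX$, cases (3) and (4)) it is the unique $L\in\calF$ with $\Ext^1(T,L)\cong\bbI[s,t]$. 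Since $\Hom(T,\blank)\colon\calT\to\calY$ and $\Ext^1(T,\blank)\colon\calF\to\calX$ are equivalences and the torsion pairs $(\calT,\calF)$, $(\calX,\calY)$ split, such an $L$ is automatically indecomposable, hence an interval $\bbI[x,y]$; so only the pair $(x,y)$ is at stake, and the point is to read it off $\bbI[s,t]$.

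Next I would pin down the tilting module $T=T(\Si)$ attached to the section $\Si$ with $\Si^{\op}\cong A_n(z_1)$, identifying, for each vertex $j$, the section summand $X_j$ and the equality $\Hom(T,X_j)\cong Q(j)$ with $Q(j)$ the indecomposable projective of $A_n(z_1)$ at $j$ (so $Q(2i-1)\cong\bbI[2i-1,2i-1]$ and $Q(2i)\cong\bbI[2i-1,2i+1]$, as recorded in the proof of Lemma~\ref{lem:division}). Given the explicit $X_j$, the representation $\Hom(T,\bbI[x,y])$ has multiplicity $\dim_\Bbbk\Hom_{\Bbbk A_n}(X_j,\bbI[x,y])$ at vertex $j$, and similarly $\Ext^1(T,\bbI[x,y])$ has multiplicity $\dim_\Bbbk\Ext^1_{\Bbbk A_n}(X_j,\bbI[x,y])$; each of these is $0$ or $1$ by the hereditary $A_n$ Hom/Ext combinatorics, so requiring the resulting dimension vector to be the indicator of $[s,t]$ yields a small system of inequalities on $(x,y)$ with the claimed unique solution in each case. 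Equivalently, one may start from the base cases — the simple projectives $\bbI[2i-1,2i-1]$, whose preimages are the section summands $X_{2i-1}$ read off $\Si$, and the simple injectives $\bbI[2i,2i]$, whose preimages are obtained by feeding the standard projective resolution $0\to P_{y+1}\to P_x\to\bbI[x,y]\to 0$ in $\rep_{\Bbbk}A_n$ into $\Ext^1(T,\blank)$ — and then propagate the two endpoints of $\bbI[s,t]$ through the short exact sequences used in Lemma~\ref{lem:division}, e.g. $0\to\bbI[s,s]\to\bbI[s,t]\oplus\bbI[1,s]\to\bbI[1,t]\to 0$ and its mirror at the death end, using that $\Hom(T,\blank)$ is exact on short exact sequences with leftmost term in $\calT$ and $\Ext^1(T,\blank)$ on those with rightmost term in $\calF$. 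The same bookkeeping admits an AR-quiver phrasing: $\Si$ is the interface between $\calT[0]$ and $\calF[1]$ inside $\AR(\Db(\rep_{\Bbbk}A_n))$ and maps to the projectives of $A_n(z_1)$ (Figure~\ref{fig:tiltingtorsion}, Figure~\ref{fig:fig5}), so a walk of meshes from $Q(j)$ to $\bbI[s,t]$ in $\AR(A_n(z_1))$ transports to the same walk from $X_j$, landing on $\bbI[x,y][0]$ in cases (1),(2) and on $\bbI[x,y][1]$ in cases (3),(4).

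The main obstacle is the bookkeeping in this last step: across the four cases and the two ends of each interval one must keep straight which endpoint of $\bbI[x,y]$ is controlled by $b$, which by $d$, and which by $n$ minus one of them. The reflections $d\mapsto n-d+1$ and $b\mapsto n-b+1$ that appear in cases (1), (3), (4) but not (2) are precisely the fingerprint of the asymmetry of $A_n(z_1)$ (odd vertices are sinks, even vertices sources, so the birth end and death end of a zigzag interval behave oppositely under the derived equivalence), and the reflection must be applied on the correct side. I would organize this by treating the four cases as the four parity combinations of $s$ and $t$, handling the $s$-end and the $t$-end of $\bbI[s,t]$ independently and reducing each to the two base computations on simple projectives and simple injectives; the half-open cases (2) and (3) then serve as consistency checks against the closed case (1) and the open case (4).
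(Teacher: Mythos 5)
Your proposal is correct and is essentially the computation that the paper's one-sentence proof defers to: the paper merely asserts that the endpoint formulas ``can be easily calculated by Proposition~\ref{prp:tilting} and the above argument'', and your dimension-vector method — reading off $\dim\Hom(X_j,\bbI[x,y])$ and $\dim\Ext^1(X_j,\bbI[x,y])$ for the section summands (which the requirement $\Hom(T,X_j)\cong Q(j)$ forces to be $X_{2i-1}=\bbI[i,n-i+1]$ and $X_{2i}=\bbI[i,n-i]$) and matching the resulting support with $[s,t]$ — is exactly that calculation carried out, and it does reproduce all four stated formulas.
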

\begin{proof}
The endpoint formulas can be easily calculated by Proposition~\ref{prp:tilting} and the above argument.
\end{proof}

Proposition~\ref{prp:division} and Proposition~\ref{prp:BL} lead to the following.

\begin{proposition} \label{prp:comparison}
Let $\bbI[s,t],\bbI[u,v]$ be interval representations of $A_n(z_1)$.
For the interval representations $\bbI^{\langle b,d \rangle_{\bbZ\bbZ}}:=\widetilde{\mu_1}(\bbI[s,t]),\bbI^{\langle e,f \rangle_{\bbZ\bbZ}}:=\widetilde{\mu_1}(\bbI[u,v])$ of $\bbZ\bbZ$, the following inequalities hold:
\begin{enumerate}
    \item[$(1)$] $ d_{BL} (\bbI^{\langle b,d \rangle_{\bbZ\bbZ}},0) \leq d^{z_1}(\bbI[s,t],0) $ if 
    $\bbI[s,t]\in\calY_{co},\calX_{o},\calX_{oc}$,
    \item[$(2)$] $ d_\bbI^{z_1}(\bbI[s,t],0) < d_{BL} (\bbI^{\langle b,d \rangle_{\bbZ\bbZ}},0)=\infty,$
    if $\bbI[s,t]\in\calY_{c}$,
    \item[$(3)$] $ d_{BL}(\bbI^{\langle b,d \rangle_{\bbZ\bbZ}}, \bbI^{\langle e,f \rangle_{\bbZ\bbZ}}) \leq d^{z_1}(\bbI[s,t],\bbI[u,v])$
    if $\bbI[s,t],\bbI[u,v] \in \calY_{co}$, $\calX_o$ or $\calX_{oc}$, 
    or if $\bbI[s,t]\in \calY_{co}$, $\bbI[u,v]\in \calX$, and
    \item[$(4)$] $ d^{z_1}(\bbI[s,t],\bbI[u,v]) \leq d_{BL}(\bbI^{\langle b,d \rangle_{\bbZ\bbZ}}, \bbI^{\langle e,f \rangle_{\bbZ\bbZ}})$
    if $\bbI[s,t]\in \calY_c$.
\end{enumerate}
\end{proposition}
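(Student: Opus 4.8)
The plan is to turn every inequality into a comparison of explicit closed formulas. The ingredients are: for an interval representation $\bbI[s,t]$ of $A_n(z_1)$, \pref{lem:corresponding} records its corresponding representation $\bbI[x,y]\in\rep_{\Bbbk} A_n$ in each of the four cases determined by the type of $\widetilde{\mu_1}(\bbI[s,t])$; \pref{prp:calofderint}, \pref{cor:directcaldDI}, and \pref{cor:calofinddist} express $d^{z_1}$ through the corresponding representations (it equals either $d_I(M,N)$, or $\max\{d_I(M,0),d_I(N,0)\}$, according to the position of the two modules relative to the torsion pair $(\calX,\calY)$); and \pref{prp:BL} expresses $d_{BL}$ through the $\bbZ\bbZ$-endpoints $(b,d),(e,f)$. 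In particular $d^{z_1}$ between two interval representations is always finite. After these substitutions, each of $(1)$--$(4)$ becomes an elementary inequality between expressions in $b,d,e,f$ and $n$, provable by routine manipulation of $\min$ and $\max$; the proof proceeds by the resulting case analysis on the four types.

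For $(1)$ and $(2)$ I compare distances to $0$. If $\bbI[s,t]\in\calY_{co}$, then by \pref{lem:corresponding} its corresponding representation is $\bbI[b,d-1]$, so $d^{z_1}(\bbI[s,t],0)=\lceil\tfrac12(d-b)\rceil\ge\tfrac12(d-b)=d_{BL}(\bbI^{\langle b,d\rangle_{\bbZ\bbZ}},0)$; the case $\bbI[s,t]\in\calX_{oc}$ is identical, since its corresponding representation $\bbI[n-d+2,n-b+1]$ again has length $d-b$. If $\bbI[s,t]\in\calX_{o}$, its corresponding representation is $\bbI[d,n-b+1]$, so $d^{z_1}(\bbI[s,t],0)=\lceil\tfrac12(n-b-d+2)\rceil$; here $t=2d-2\le n$ and $n$ odd force $n\ge 2d-1$, whence $\tfrac12(n-b-d+2)\ge\tfrac12(d-b+1)>\tfrac14(d-b)=d_{BL}(\bbI^{\langle b,d\rangle_{\bbZ\bbZ}},0)$. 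Statement $(2)$ is then immediate: for $\bbI[s,t]\in\calY_c$ one has $d_{BL}(\bbI^{\langle b,d\rangle_{\bbZ\bbZ}},0)=\infty$, while the corresponding representation $\bbI[b,n-d+1]$ gives the finite value $d^{z_1}(\bbI[s,t],0)=\lceil\tfrac12(n-b-d+2)\rceil$.

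For $(3)$ and $(4)$ I compare distances between two interval representations. The decisive point is that when $\bbI[s,t]$ and $\bbI[u,v]$ lie in the \emph{same} one of $\calY_{co},\calX_o,\calX_{oc}$, the corresponding endpoints from \pref{lem:corresponding} satisfy $\{|x-x'|,|y-y'|\}=\{|b-e|,|d-f|\}$ as unordered pairs, so the endpoint-matching term $\max\{|x-x'|,|y-y'|\}$ of $d_I(M,N)$ equals the term $\max\{|b-e|,|d-f|\}$ of \pref{prp:BL}, while the collapse-to-$0$ term $\max\{d_I(M,0),d_I(N,0)\}$ dominates $\max\{d_{BL}(\bbI^{\langle b,d\rangle_{\bbZ\bbZ}},0),d_{BL}(\bbI^{\langle e,f\rangle_{\bbZ\bbZ}},0)\}$ by $(1)$; taking minima gives $d_{BL}\le d^{z_1}$, which is $(3)$ in the same-type cases. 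The mixed case of $(3)$, $\bbI[s,t]\in\calY_{co}$ and $\bbI[u,v]\in\calX$, reduces to $(1)$: the two lie on opposite sides of the torsion pair, so \pref{cor:calofinddist} gives $d^{z_1}(\bbI[s,t],\bbI[u,v])=\max\{d^{z_1}(\bbI[s,t],0),d^{z_1}(\bbI[u,v],0)\}$; the corresponding $\bbZ\bbZ$-intervals have different types, so \pref{prp:BL} gives $d_{BL}=\max\{d_{BL}(\bbI^{\langle b,d\rangle_{\bbZ\bbZ}},0),d_{BL}(\bbI^{\langle e,f\rangle_{\bbZ\bbZ}},0)\}$; and $(1)$ bounds each summand. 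For $(4)$, if $\widetilde{\mu_1}(\bbI[u,v])$ is not closed then $d_{BL}=\infty$ and there is nothing to prove; if it is closed, then \pref{prp:BL} gives $d_{BL}=\max\{|b-e|,|d-f|\}$, which by \pref{lem:corresponding} equals the endpoint-matching term of $d_I(M,N)=d^{z_1}(\bbI[s,t],\bbI[u,v])$ and hence bounds it above.

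The main obstacle is bookkeeping rather than any new idea: one must keep the four type-cases of \pref{lem:corresponding} straight, check in each same-type comparison that the two pairs of endpoint differences coincide as unordered pairs (so that their maxima agree), and handle the factor $\tfrac14$ of the open case together with the parity constraint $n\ge 2d-1$ forced by $n$ being odd. Given these elementary identities and the inequalities from $(1)$, all four assertions follow from the displayed manipulations of $\min$ and $\max$.
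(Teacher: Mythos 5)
Your proposal is correct and takes essentially the same route as the paper: both proofs reduce everything to the explicit formulas from Proposition~\ref{prp:BL}, Lemma~\ref{lem:corresponding}, Corollary~\ref{cor:directcaldDI}, and Corollary~\ref{cor:calofinddist}, and then compare term by term (the paper enumerates six explicit sub-cases for the two-interval comparison, whereas you compress the same-type cases via the observation that the corresponding endpoint differences coincide as unordered pairs — a presentational simplification, not a different argument).
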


\begin{proof}
In each case, the value of $d_{BL}$ can be calculated by Proposition~\ref{prp:BL}.
On the other hand, the value of $d^{z_1}$ in each case can be calculated by Lemma~\ref{lem:corresponding}, Corollary~\ref{cor:directcaldDI}, and Corollary~\ref{cor:calofinddist}. 

Let us first compute $d^{z_1}(\bbI[s,t],0)$.
We have
\begin{equation}
d^{z_1}(\bbI[s,t],0)=
\left\{
\begin{array}{cc}
\lceil \frac{1}{2}|d-b| \rceil,  &  \bbI[s,t]\in \calY_{co} \text{ or } \calX_{oc} \\ 
\lceil \frac{1}{2}|n-(b+d)+2| \rceil, & \bbI[s,t]\in \calY_c \text{ or } \calX_o  
\end{array}
\right..
\end{equation}
When $1\leq b\leq d \leq \lceil\frac{n}{2}\rceil$, it is easy to check the inequality $|d-b| < |n-(b+d)+2| $. Thus, we obtain inequality (1).
Since $d^{z_1}(\bbI[s,t],0)< \infty$ always holds,
we obtain inequality (2).

Moreover, by the symmetry of the distance,  
$d:=d^{z_1}(\bbI[s,t],\bbI[u,v])$ can be calculated as follows. 
\begin{enumerate}
    \item[(a)] if $\bbI[s,t],\bbI[u,v]\in \calY_c$, then \begin{equation}
    d=\min\{\max\{ |b-e|, |d-f| \},\max\{\lceil \frac{1}{2}|n-(b+d)+2| \rceil, \lceil \frac{1}{2}|n-(e+f)+2| \rceil \}\},
    \end{equation}   
    \item[(b)] if $\bbI[s,t],\bbI[u,v]\in \calY_{co}$ or  $\bbI[s,t],\bbI[u,v]\in \calX_{oc}$, then 
    \begin{equation}
    d=\min\{\max\{ |b-e|, |d-f| \},\max\{\lceil \frac{1}{2}|d-b| \rceil, \lceil \frac{1}{2}|f-e| \rceil \}\},
    \end{equation} 
    \item[(c)] if $\bbI[s,t],\bbI[u,v]\in \calX_o$, then 
    \begin{equation}
    d=\min\{\max\{ |b-e|, |d-f| \},\max\{\lceil \frac{1}{2}|n-(b+d)+2| \rceil, \lceil \frac{1}{2}|n-(e+f)+2| \rceil \}\},
    \end{equation} 
    \item[(d)] if $\bbI[s,t]\in\calY_c, \bbI[u,v]\in \calY_{co}$, then 
    \begin{equation}
    d=\min\{\max\{ |b-e|, |n-(d+f)+2| \},\max\{\lceil \frac{1}{2}|n-(b+d)+2| \rceil, \lceil \frac{1}{2}|f-e| \rceil \}\},
    \end{equation}   
    \item[(e)] if $\bbI[s,t]\in\calX_{oc},\bbI[u,v]\in\calX_o$, then 
    \begin{equation}
    d=\min\{\max\{ |b-e|, |n-(d+f)+2| \}, 
    \max\{\lceil \frac{1}{2}|d-b| \rceil, \lceil \frac{1}{2}|n-(e+f)+2| \rceil \}\},\end{equation}
    and
    \item[(f)] if $\bbI[s,t] \in \calY, \bbI[u,v]\in \calX$, then
    \begin{equation}
    d=\max\{d_I^a(\bbI[s,t],0),d_I^a(\bbI[u,v],0) \}.\end{equation} 
\end{enumerate}

Since the inequality $|b-d|<|n-(b+d)+2|$
$(1\leq b\leq d\leq \lceil\frac{n}{2}\rceil)$ holds, by (1), the inequality 
\begin{equation}
d_{BL} (\bbI^{\langle b,d \rangle_{\bbZ \bbZ}},\bbI^{\langle e,f \rangle_{\bbZ \bbZ}}) \leq d^{z_1}(\bbI[s,t],\bbI[u,v]) 
\end{equation}
holds in cases (b), (c), and (f) except for when $\bbI[s,t]\in \calY_c$ in case (f).
Thus, we obtain the inequality (3).

In case (f), if $\bbI[s,t]\in \calY_c$, then $d_{BL}(\bbI^{[b,d]_{\bbZ\bbZ}},0)=\infty,$ 
and hence 
\begin{equation}d^{z_1}(\bbI[s,t],\bbI[u,v]) < d_{BL}(\bbI^{[b,d]_{\bbZ\bbZ}},\bbI^{[e,f]_{\bbZ\bbZ}}).\end{equation}
In case (a), since $\bbI[s,t],\bbI[u,v]\in \calY_c$, 
$d_{BL} (\bbI^{[b,d]_{\bbZ\bbZ}},0)=d_{BL}(\bbI^{[e,f]_{\bbZ\bbZ}},0)=\infty$. 
Then by definition, the inequality
\begin{equation} d^{z_1}(\bbI[s,t],\bbI[u,v])\leq d_{BL}(\bbI^{[b,d]_{\bbZ\bbZ}},\bbI^{[e,f]_{\bbZ\bbZ}})=\max\{|b-e|,|d-f|\} \end{equation} holds.
In case (d), since $\bbI[s,t]\in\calY_c$, $\bbI[u,v]\in \calY_{co}$, 
\begin{equation} d_{BL}(\bbI^{[b,d]_{\bbZ\bbZ}},\bbI^{[e,f)_{\bbZ\bbZ}})= \max\{d_{BL}(\bbI^{[b,d]_{\bbZ\bbZ}},0),d_{BL}(\bbI^{[e,f)_{\bbZ\bbZ}},0) \}=\infty.\end{equation} 
Then it is obvious that $d^{z_1}(\bbI[s,t],\bbI[u,v]) < d_{BL}(\bbI^{[b,d]_{\bbZ\bbZ}},\bbI^{[e,f]_{\bbZ\bbZ}})$.
Thus, we obtain the inequality (4).
\end{proof}

\begin{remark}
In Proposition~\ref{prp:comparison}, the case in which  $\bbI[s,t]\in\calX_{oc}$, $\bbI[u,v]\in\calX_o$ remains.
In this case, we have 
\begin{equation}
\begin{array}{rcl}
d_{BL}(\bbI^{(b,d]_{\bbZ\bbZ}},\bbI^{(e,f)_{\bbZ\bbZ}})
&=&\max\{d_{BL}(\bbI^{(b,d]_{\bbZ\bbZ}},0),d_{BL}(\bbI^{(e,f)_{\bbZ\bbZ}},0)\} \\
&=&\max\{\frac{1}{2}|d-b|,\frac{1}{4}|f-e| \}, 
\end{array}
\end{equation}
and $d_{BL}$ and $d^{z_1}$ are incomparable for large $n$.
For example, 
in the case that $n=7$, we consider representations $\bbI[2,7]$, $\bbI[2,6]$, and $\bbI[1,2]$ of $A_n(z_1)$.
Then we have $\widetilde{\mu_1}(\bbI[2,7])=\bbI^{(1,4]_{\bbZ\bbZ}}$, $\widetilde{\mu_1}(\bbI[2,6])=\bbI^{(1,4)_{\bbZ\bbZ}}$, and $\widetilde{\mu_1}(\bbI[2,2])=\bbI^{(1,2)_{\bbZ\bbZ}}$.
By Proposition~\ref{prp:BL}, Lemma~\ref{lem:corresponding}, Corollary~\ref{cor:directcaldDI}, and Corollary~\ref{cor:calofinddist}, the inequalities 
\begin{equation}
d_{BL}(\bbI^{(1,4]_{\bbZ\bbZ}},\bbI^{(1,4)_{\bbZ\bbZ}})=\frac{3}{2} > 1= d^{z_1}(\bbI[2,7],\bbI[2,6])\end{equation}
and
\begin{equation}
d_{BL}(\bbI^{(1,4]_{\bbZ\bbZ}},\bbI^{(1,2)_{\bbZ\bbZ}})=\frac{3}{2} < 3=d^{z_1}(\bbI[2,7],\bbI[2,2])
\end{equation}
hold, as desired.
\end{remark}

We conclude that the block distance $d_{BL}$ in \citet{botnan2018algebraic} and our induced distance $d^{z_1}$ are incomparable. 
Indeed, Proposition~\ref{prp:comparison} (3) and (4) tell us that the inequality for comparing the distances is dependent on interval type.

\subsection{Comparison in arbitrary zigzag setting} \label{subsec:arbzigzag}
For any zigzag persistence module $M \in \rep_{\Bbbk} A_n(a)$, 
we can associate a purely zigzag persistence module $\widetilde{M}\in \rep_{\Bbbk} A_m(z_1)$ with $m\geq n$ by adding identity morphisms. 
We may assume that $m$ is the minimum with respect to adding identity morphisms. 
Namely, the number of added identity morphisms can be minimized as much as possible.
For example, $A_n(a)$ is assumed to be a quiver of the form $1 \to 2 \to 3 \leftarrow 4$. Then a representation $M$ of the quiver is of the form
\begin{equation}
M= M_1 \to M_2 \to M_3 \leftarrow M_4 
\end{equation}
and $\widetilde{M}$ is defined by 
\begin{equation}
\widetilde{M} = M_1 \xleftarrow{\id} M_1 \to M_2 \xleftarrow{\id} M_2 \to M_3 \leftarrow M_4 \in \rep_{\Bbbk} A_6 (z_1).
\end{equation}

The correspondence $M \mapsto \widetilde{M}$ induces a fully faithful functor 
\[\iota_a \colon \rep_{\Bbbk} A_n(a) \to \rep_{\Bbbk} A_m(z_1).\]
Similarly, we obtain a fully faithful functor $\iota'_a \colon \rep_{\Bbbk} A_n(a) \to \rep_{\Bbbk} A_m(z_2)$.
Through the functor $\iota_a$ (resp.\ $\iota'_a$), the category $\rep_{\Bbbk} A_n(a)$ is regarded as a full subcategory of $\rep_{\Bbbk} A_m(z_1)$ (resp.\ $\rep_{\Bbbk} A_m(z_2)$).
Thus, by an isometry theorem for purely zigzag persistence modules in \citet{botnan2018algebraic,bjerkevik2016stability}, we obtain an isometry theorem for arbitrary zigzag persistence modules. 
The induced block distance on arbitrary zigzag persistence modules is denoted by $d_{BL}^a$.
Namely, for $M,N \in \rep_\Bbbk A_n(a)$,
\begin{equation}
    d_{BL}^a(M,N):=d_{BL}(\widetilde{\mu_1}(\iota_a (M)), \widetilde{\mu_1}(\iota_a (N))).
\end{equation}

In this case,  our induced distance $d^a$ (see Section~\ref{sec:Dircal}) on zigzag persistence modules is also incompatible with the distance $d_{BL}^a$, as we will see below.
We may assume that $n\geq 4$ since there are no non-purely zigzag persistence modules in the case that $n\leq 3$.
Moreover, we may assume that $A_n(a)$ is not purely zigzag.
Then $m\geq 5$.

\noindent (1) We will confirm $d^a < d_{BL}^a$ for a closed interval representation.
There is at least one odd integer $t$ such that $\bbI[1,t]=\iota_a(\bbI [1, c])$ for some $1\leq c \leq n$
since the morphism $\widetilde{M}_{m-1} \to \widetilde{M}_m$ is not an added identity morphism by the minimality of $m$.
Since $\bbI[1,t] \in \calY_{c}$ and $d^a$ is always finite, the inequality 
\begin{equation}d^a(\bbI[1,c],0) < d_{BL}^a (\bbI[1,c],0)=d_{BL}(\widetilde{\mu_1}(\bbI[1,t]),0)=\infty\end{equation}
holds by Proposition~\ref{prp:BL}.

\noindent (2) We will confirm $d^a < d_{BL}^a$ for a half-open interval representation.

First, if $1$ is a source vertex in $A_n(a)$, then 
we have $\bbI[1,2]=\iota_a(\bbI[1,1])\in \calY_{co}$.
In this case, $\widetilde{\mu_1}(\bbI[1,2])=I^{[1,2)_{\bbZ\bbZ}}$ is right-open. 
Then since the inequality $d^a(\bbI[1,1],0)\geq 1$ always holds,
the inequality 
\begin{equation}
d_{BL}^a(\bbI[1,1],0)=d_{BL}(\bbI^{[1,2)_{\bbZ\bbZ}},0)=\frac{1}{2} < d^a(\bbI[1,1],0)
\end{equation}
holds by Proposition~\ref{prp:BL}.

Second, in the case that the quiver $A_n(a)$ is of the form $1\leftarrow 2 \to 3 \leftrightarrow \cdots \leftrightarrow n$,
the inequality $d_{BL}^a(\bbI[1,2],0)< d^a(\bbI[1,2],0)$ holds similarly as above.

Third, if the quiver $A_n(a)$ is of the form $1\leftarrow 2 \leftarrow 3 \leftrightarrow \cdots \leftrightarrow n$, 
then $\bbI[2,3] =\iota_a(\bbI[2,2]) \in \calX_{oc}$.
In this case, $\widetilde{\mu_1}(\bbI[2,3])=I^{(1,2]_{\bbZ\bbZ}}$ is left-open.
Then since the inequality $d^a(\bbI[2,2],0)\geq 1$ always holds,
the inequality 
\begin{equation}
d_{BL}^a(\bbI[2,2],0)=d_{BL}(\bbI^{(1,2]_{\bbZ\bbZ}},0)=\frac{1}{2} < d^a(\bbI[2,2],0)
\end{equation}
holds by Proposition~\ref{prp:BL}. 

In cases (1) and (2), the inequality for comparing the distances $d^a$ and $d_{BL}^a$ is dependent on interval type, hence they are incomparable.

\section{Relation to sheaf-theoretic distance} \label{sec:compsheaf}

In this section, we study the relation to the convolution distance introduced by \citet{kashiwara2018persistent}.
The convolution distance is defined as a distance on the derived category $\Db(\Sh_c(\Bbbk_\bbR))$ of constructible sheaves on $\bbR$ (indeed in a more general setting).
In \pref{subsec:quiver-and-sheaf}, we prove a subcategory of $\Sh_c(\Bbbk_\bbR)$ is equivalent to the category $\rep_{\Bbbk} \bbZ\bbZ^{\op}$ of representations of the opposite quiver of the infinite purely zigzag quiver $\bbZ \bbZ$ (see \pref{subsec:bldist}).
Through the equivalence the convolution distance induces a distance on $\Db(\rep_{\Bbbk} \bbZ\bbZ^{\op})$.
In \pref{subsec:BL-convolution}, we show that $0$-th cohomology part of the induced distance coincides with the block distance $d_{BL}$.
In \pref{subsec:sheaf-equioriented}, we also see that the derived interleaving distance on ordinary persistence modules can be realized as a modified version of the convolution distance.

\subsection{Quiver representations and constructible sheaves}\label{subsec:quiver-and-sheaf}

In this subsection, we prove the category of representations of the infinite zigzag quiver $\bbZ\bbZ^{\op}$ is equivalent to some sheaf category on $\bbR$.
The equivalence induces a distance on the derived category of such representations.

First, let us briefly recall the notion of sheaves and fix some notation.
Let $X$ be a topological space and $\operatorname{Open}(X)$ the category of open subsets of $X$ whose Hom-set $\Hom(U,V)$ is the singleton if $U \subset V$ and empty otherwise.
A sheaf $F$ of $\Bbbk$-vector spaces on $X$ is a functor $\operatorname{Open}(X)^{\op} \to \Vect(\Bbbk)$ with some gluing condition (see \citealt{KS90} for example).
We write $\Sh(\Bbbk_X)$ for the abelian category of sheaves of $\Bbbk$-vector spaces on $X$.
In what follows, we focus on sheaves on $\bbR$.
A sheaf $F \in \Sh(\Bbbk_\bbR)$ is said to be \emph{constructible} if there exist  discrete points $\{x_k\}_{k \in \bbZ}$ with $x_k < x_{k+1} $ such that $F|_{(x_k, x_{k+1})}$ is locally constant for any $k \in \bbZ$ and $F_t$ is finite-dimensional for any $t \in \bbR$.
We denote by $\Sh_c(\Bbbk_\bbR)$ the full subcategory of $\Sh (\Bbbk_{\bbR})$ consisting of constructible sheaves.

\begin{definition}
    One defines $\Sh_{\bbZ}(\Bbbk_\bbR)$ as the full subcategory of $\Sh_c(\Bbbk_\bbR)$ consisting of objects $F$ such that $F|_{(i,i+1)}$ is constant for any $i \in \bbZ$.
\end{definition}

Now we consider the relation between representations of the infinite zigzag quiver and sheaves on $\bbR$, which is essentially studied by \citet{Gu16}.
For $X \in \rep_{\Bbbk} \bbZ\bbZ^{\op}$, we set
\begin{equation}
    F^X_t := 
    \begin{cases}
        X_{(i,i)}, & t=i \in \bbZ \\
        X_{(i,i-1)}, & i-1 \le t \le i \ (i \in \bbZ)
    \end{cases}
\end{equation}
and define $S(X) \in \Sh_{\bbZ}(\Bbbk_\bbR)$ by
\begin{equation}
    S(X)(U):=
    \left\{ 
        f \in \prod_{t \in U}F^X_t 
        \; \middle| \; 
        \begin{aligned}
            & \text{$f|_{U \setminus \bbZ}$ is locally constant, } \\
            & \text{for any $y \in \bbZ \cap U$ and $\varepsilon>0$ small enough}, \\  
            & f(y-\varepsilon)=X_{\be_y}(f(y)) \text{ with $\be_y = \alpha_{(y,y-1),(y,y)}$}, \\ 
            & f(y+\varepsilon)=X_{\be'_{y}}(f(y)) \text{ with $\be'_y = \alpha_{(y+1,y),(y,y)}$} 
        \end{aligned}
    \right\}.
\end{equation}
Note that with a morphism $\varphi \colon X \to Y$ one can associate a canonical morphism $S(\varphi) \colon S(X) \to S(Y)$.
The correspondence defines a functor $S \colon \rep_{\Bbbk} \bbZ\bbZ^{\op} \to \Sh_{\bbZ}(\Bbbk_\bbR)$.
For this functor, we have the following equivalence.

\begin{proposition}\label{prp:rep-sheaf-equivalence}
    The functor $S \colon \rep_{\Bbbk} \bbZ\bbZ^{\op} \to \Sh_{\bbZ}(\Bbbk_\bbR)$ is an equivalence of categories.
\end{proposition}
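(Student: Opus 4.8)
The plan is to construct an explicit quasi-inverse $T$ to $S$ out of stalks and generization maps, and then to check that $T\circ S\iso\mathrm{id}$ and $S\circ T\iso\mathrm{id}$. Concretely, I would define $T\colon\Sh_{\bbZ}(\Bbbk_\bbR)\to\rep_{\Bbbk}\bbZ\bbZ^{\op}$ by $T(F)_{(i,i)}:=F_i$, the stalk at the integer point $i$, and $T(F)_{(i,i-1)}:=F_t$ for any $t\in(i-1,i)$ --- well defined since $F|_{(i-1,i)}$ is constant --- with the structure maps of $T(F)$ along the two arrows of $\bbZ\bbZ^{\op}$ out of the source vertex $(i,i)$ being the generization maps $F_i\to F_{i\pm1/2}$ induced by restriction to the intervals $(i,i+1)$ and $(i-1,i)$ adjacent to $i$. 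On morphisms $T$ is ``take stalks'', which is clearly functorial and faithful, since a morphism of sheaves on $\bbR$ is determined by its induced maps on stalks.

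First I would compute the sections and stalks of $S(X)$ directly from the definition. Away from $\bbZ$, a section of $S(X)$ on a small connected open set is just a locally constant function, so $S(X)_t\iso F^X_t$, which equals $X_{(i,i-1)}$ for $t\in(i-1,i)$ and $X_{(i+1,i)}$ for $t\in(i,i+1)$. At $t=i$, taking $U=(i-\varepsilon,i+\varepsilon)$ the defining conditions force a section $f\in S(X)(U)$ to be pinned down by $f(i)\in X_{(i,i)}$ alone, since $f$ is constant equal to $X_{\beta_i}(f(i))$ on $(i-\varepsilon,i)$ and constant equal to $X_{\beta'_i}(f(i))$ on $(i,i+\varepsilon)$; hence $S(X)(U)\iso X_{(i,i)}$ and so $S(X)_i\iso X_{(i,i)}$. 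Reading off the restriction maps $S(X)(U)\to S(X)\bigl(U\cap(i,i+1)\bigr)$ and $S(X)(U)\to S(X)\bigl(U\cap(i-1,i)\bigr)$ shows that the generization maps of $S(X)$ are exactly $X_{\beta'_i}$ and $X_{\beta_i}$, i.e.\ the structure maps of $X$ along the two arrows leaving $(i,i)$ in $\bbZ\bbZ^{\op}$. This yields a natural isomorphism $T\circ S\iso\mathrm{id}$.

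Next I would produce a natural isomorphism $\eta_F\colon F\simto S(T(F))$ by sending $s\in F(U)$ to the family $(s_t)_{t\in U}$ of its germs. This family lies in $S(T(F))(U)$: off $\bbZ$ it is locally constant because $F$ is constant on each $(i,i+1)$, and at every integer $y\in U$ it satisfies the germ relations $s_{y\pm\varepsilon}=(\text{generization map})(s_y)$ by the very definition of $T(F)$. The map $\eta_F$ is injective because $F$ is separated, and surjective because $F$ satisfies gluing: a combinatorially compatible family of germs in $S(T(F))(U)$ is, on a neighbourhood of each point of $U$, the germ family of an honest section of $F$ --- on intervals $(i,i+1)$ by local constancy, and near an integer $y$ because a section of $F$ over a small symmetric neighbourhood of $y$ is freely determined by its germ at $y$ --- and these local sections agree on overlaps and hence glue. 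Naturality in $F$ is immediate, and combined with the previous step this proves that $S$ is an equivalence.

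The hard part will be the last surjectivity claim, namely that an object of $\Sh_{\bbZ}(\Bbbk_\bbR)$ is freely reconstructed from its stalks along the generization maps; the crucial local statement is that for $F\in\Sh_{\bbZ}(\Bbbk_\bbR)$ and $U=(y-\varepsilon,y+\varepsilon)$ the germ-at-$y$ map $F(U)\to F_y$ is an isomorphism. This is the local incarnation of the classical equivalence between sheaves on $\bbR$ that are constructible for the integer stratification and representations of the associated entrance-path (``zigzag'') quiver; I would prove it by hand, reducing the gluing axiom to the cover of $U$ by $(y-\varepsilon,y)$, $(y,y+\varepsilon)$ and a small interval about $y$, or invoke \citet{Gu16} (see also \citet{KS90}). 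Everything else --- functoriality of $T$, faithfulness via stalks, and naturality of the two isomorphisms --- is routine.
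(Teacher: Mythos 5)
Your proposal is correct and takes essentially the same route as the paper: the paper's quasi-inverse $R$ sends $F$ to the representation with $R(F)_{(i,i)}=F((i-1,i+1))\cong F_i$ and $R(F)_{(i,i-1)}=F((i-1,i))\cong F_t$ for $t\in(i-1,i)$, with restriction maps as structure maps, which is canonically isomorphic to your stalk-and-generization functor $T$. The only difference is that the paper leaves the verification that $R$ is a quasi-inverse to the reader, whereas you supply it, correctly isolating the key local fact that $F(U)\to F_y$ is an isomorphism for a small symmetric neighbourhood $U$ of an integer $y$.
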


\begin{proof}
    We define a functor $R \colon \Sh_{\bbZ}(\Bbbk_\bbR) \to \rep_{\Bbbk} \bbZ\bbZ^{\op}$ as follows.
    Let $F \in \Sh_{\bbZ}(\Bbbk_\bbR)$.
    For $i \in \bbZ$, we set  
    \begin{equation}
        X_{(i,i)}:= F((i-1,i+1)) \cong F_i, 
        \quad 
        X_{(i,i-1)} := F((i-1,i)).
    \end{equation}
    We also define morphisms $X_{(i,i)} \to X_{(i,i-1)}$ as $F((i-1,i+1)) \to F((i-1,i))$ and $X_{(i-1,i-1)} \to X_{(i,i-1)}$ as $F((i-2,i)) \to F((i-1,i))$ for $i \in \bbZ$.
    Define $R(F):=X \in \rep_{\Bbbk} \bbZ\bbZ^{\op}$.
    One can show that $R$ is a quasi-inverse of $S$, which proves the result.
\end{proof}

\begin{remark}
    As seen in Subsections~\ref{subsec:comparison} and \ref{subsec:arbzigzag}, for any orientation $a$ we have the embedding $\widetilde{\mu_1}\circ\iota_a \colon \rep_\Bbbk A_n(a) \to \rep_\Bbbk \bbZ\bbZ$. 
    Combining this embedding with \pref{prp:rep-sheaf-equivalence}, we can regard arbitrary (not necessarily purely) zigzag persistence modules as constructible sheaves on $\bbR$.
    In particular, in the case $a=e$, ordinary persistence modules can also be regarded as constructible sheaves on $\bbR$, which we will study in \pref{subsec:sheaf-equioriented} in more detail.
\end{remark}

The convolution distance was introduced by \cite{kashiwara2018persistent} on the derived category $\Db(\Sh_c(\Bbbk_{\bbR^n}))$ of constructible sheaves on $\bbR^n$.
Here we recall it in the case $n=1$.
Let $q_1,q_2 \colon \bbR^2 \to \bbR$ be the first and second projections.
Moreover, we set $s \colon \bbR^2 \to \bbR, (t_1,t_2) \mapsto t_1+t_2$.
For $F^\bullet,G^\bullet \in \Db(\Sh_c(\Bbbk_{\bbR}))$, their convolution $F^\bullet \star G^\bullet \in \Db(\Sh_c(\Bbbk_{\bbR}))$ is defined by 
\begin{equation}
    F^\bullet \star G^\bullet := Rs_! (q_1^{-1}F \otimes q_2^{-1}G).
\end{equation}
For $\varepsilon \ge 0$, we set $K_\varepsilon:=\Bbbk_{[-\varepsilon,\varepsilon]} \in \Sh_c(\Bbbk_\bbR)$, which has stalks $\Bbbk$ on $[-\varepsilon,\varepsilon]$ and $0$ otherwise.
We have $K_{\varepsilon} \star K_{\varepsilon'} \cong K_{\varepsilon+\varepsilon'}$ for $\varepsilon, \varepsilon' \ge 0$.
For $\varepsilon \ge 0$, the canonical morphism $K_\varepsilon \to K_0$ induces a morphism $\phi_{F^\bullet}^\varepsilon \colon F^\bullet \star K_\varepsilon \to F^\bullet \star K_0 \cong F^\bullet$.
For $F^\bullet,G^\bullet \in \Db(\Sh_c(\Bbbk_{\bbR}))$ and $\varepsilon \ge 0$, $F^\bullet$ and $G^\bullet$ are said to be \emph{$\varepsilon$-isomorphic} if there exist morphisms $f \colon K_\varepsilon \star F^\bullet \to G^\bullet$ and $g \colon K_\varepsilon \star G^\bullet \to F^\bullet$ such that following diagrams commute:
\begin{equation}
\xymatrix{
F^\bullet \star K_{2\varepsilon} \ar[rr]^-{\phi_{F^\bullet}^{2\varepsilon}} \ar[dr]_-{f \star K_{\varepsilon}} && F^\bullet\\
& G^\bullet \star K_{\varepsilon} \ar[ur]_-{g} &
},\quad
\xymatrix{
G^\bullet \star K_{2\varepsilon} \ar[rr]^-{\phi_{G^\bullet}^{2\varepsilon}} \ar[dr]_-{g \star K_{\varepsilon}} && G^\bullet\\
& F^\bullet \star K_{\varepsilon} \ar[ur]_-{f} &
}.    
\end{equation}
The \emph{convolution distance} on $\Db(\Sh_c(\Bbbk_{\bbR}))$ is defined as 
\begin{equation}
    d_C(F^\bullet,G^\bullet)
    :=
    \inf\{ \varepsilon \ge 0 \mid \text{$F^\bullet$ and $G^\bullet$ are $\varepsilon$-isomorphic} \}.
\end{equation}

Through the equivalence in \pref{prp:rep-sheaf-equivalence}, the convolution distance induces a distance on $\Db(\rep_{\Bbbk} \bbZ\bbZ^{\op})$.
In the next subsection, we compare a non-derived version of the induced distance with the distances $d^{z_1}$ and $d_{BL}$ (see \pref{sec:Dircal} and \pref{dfn:BLdist}).

\subsection{Block distance and convolution distance}\label{subsec:BL-convolution}

In this subsection, we investigate the relation between the distance $d_{BL}$ and the convolution distance through the equivalence in \pref{prp:rep-sheaf-equivalence}.

\cite{berkouk2019level} considered a functor $\Xi \colon \rep_{\Bbbk} \bbU \to \Sh_c(\Bbbk_\bbR)^{\op}$ and \linebreak $\Psi \colon \Sh_c(\Bbbk_\bbR)^{\op} \to \rep_{\Bbbk} \bbU$, where $\bbU=\{ (a,b) \in \bbR^2 \mid b \ge a \}$.
Moreover, they proved that the functors commute with $[\vec \varepsilon]$ and $(\blank)\star K_\varepsilon$ in the derived setting, where $\vec \varepsilon=(-\varepsilon,\varepsilon)$.
Here we restate the result in a weaker form, i.e., in the non-derived setting.
For $F \in \Sh_c(\Bbbk_\bbR)$, we define 
\begin{align}
    F \star_{\text{nd}} K_\varepsilon & := H^0(F \star K_\varepsilon) \in \Sh_c(\Bbbk_\bbR), \\
    \phi_{F, \mathrm{nd}}^{\varepsilon} & :=H^0(\phi_{F}^{\varepsilon}) \colon F \starnd K_\varepsilon \to F \ \text{in $\Sh_c(\Bbbk_\bbR)$}.
\end{align}
By \citet[Proposition~3.22, Lemmas~3.27 and 3.28]{berkouk2019level} and \citet[Proposition~3.8]{berkouk2018derived}, for $M \in \rep_{\Bbbk} \bbU$ whose indecomposable summands are only blocks of type $\mathbf{bb}^-, \mathbf{db}^+, \mathbf{hb}$, and $\mathbf{vb}$, one has $\Xi(M[\vec \varepsilon]) \cong M \starnd K_\varepsilon$ for any $\varepsilon \ge 0$.
Similarly, for $F \in \Sh_c(\Bbbk_\bbR)$ one has $\Psi(F)[\vec \varepsilon] \cong \Psi(F \starnd K_\varepsilon)$ for any $\varepsilon \ge 0$ (cf.\ \citealt[Proposition~4.16]{berkouk2019level}).
Moreover, they satisfy $\Xi \circ \Psi \simeq \id$.

Now we consider the relation to the equivalence in \pref{prp:rep-sheaf-equivalence}.
We define $\Theta$ as the composite $\rep_{\Bbbk} \bbZ\bbZ \xrightarrow{D} (\rep_{\Bbbk} \bbZ\bbZ^{\op})^{\op} \xrightarrow{S^{\op}} \Sh_{\bbZ}(\Bbbk_\bbR)^{\op}$, where $D$ denotes the $\Bbbk$-dual functor.

\begin{proposition}\label{prp:commutativity}
    One has the following commutative diagram:
    \begin{equation}
        \xymatrix{
            \rep_{\Bbbk} \bbZ\bbZ \ar[r]^-{J} \ar[rd]_-{\Theta} & \rep_{\Bbbk} \bbU \ar[d]^-{\Xi} \\
            & \Sh_{\bbZ}(\Bbbk_\bbR)^{\op}
        }.
    \end{equation}
\end{proposition}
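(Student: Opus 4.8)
The plan is to check commutativity object-by-object on interval representations and then to promote the resulting family of isomorphisms to a natural transformation. Both $\Xi \circ J$ and $\Theta = S^{\op}\circ D$ are additive functors $\rep_{\Bbbk}\bbZ\bbZ \to \Sh_c(\Bbbk_\bbR)^{\op}$, and every pointwise finite-dimensional representation of $\bbZ\bbZ$ is a locally finite direct sum of interval representations by the Krull--Schmidt--Remak--Azumaya theorem together with \citet{botnan2017interval}; since $J$, $\Xi$, $S$ and $D$ carry such direct sums to direct sums, it suffices to produce, for each interval $\langle b,d\rangle_{\bbZ\bbZ}$ of $\bbZ\bbZ$, an isomorphism $\eta_{\langle b,d\rangle_{\bbZ\bbZ}} \colon \Xi(J(\bbI^{\langle b,d\rangle_{\bbZ\bbZ}})) \simto \Theta(\bbI^{\langle b,d\rangle_{\bbZ\bbZ}})$, natural in $\bbI^{\langle b,d\rangle_{\bbZ\bbZ}}$. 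Along the way one also has to observe that $\Xi\circ J$ indeed takes values in the subcategory $\Sh_{\bbZ}(\Bbbk_\bbR)^{\op}\subset\Sh_c(\Bbbk_\bbR)^{\op}$, as the diagram asserts.

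First I would evaluate $\Xi\circ J$ on interval representations. By \pref{lem:send-block}, $J$ sends the closed, open, right-open and left-open interval representation $\bbI^{\langle b,d\rangle_{\bbZ\bbZ}}$ to the block representation of $\bbU$ of type $\mathbf{bb}^-$, $\mathbf{db}^+$, $\mathbf{hb}$ and $\mathbf{vb}$ respectively, with the defining corner of the block at the integer point of $\bbU$ determined by $(b,d)$ via the inclusion $\bbZ\bbZ\subset\bbU$. Applying the explicit description of $\Xi$ on block representations from \citet{berkouk2019level} then produces, in each of the four cases, a constructible sheaf on $\bbR$ whose only possible discontinuity points are integers; hence it lies in $\Sh_{\bbZ}(\Bbbk_\bbR)$, which both proves the factorization claim and pins down $\Xi(J(\bbI^{\langle b,d\rangle_{\bbZ\bbZ}}))$ for each type.

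Next I would evaluate $\Theta$ on interval representations. The $\Bbbk$-dual $D(\bbI^{\langle b,d\rangle_{\bbZ\bbZ}})$ is the interval representation of $\bbZ\bbZ^{\op}$ supported on the same set of vertices (convexity being self-dual) with all structure maps transposed, so its type is again read off from $(b,d)$. Feeding this into the explicit formula for $S$ from the proof of \pref{prp:rep-sheaf-equivalence} yields an object of $\Sh_{\bbZ}(\Bbbk_\bbR)$, and a case-by-case comparison shows that for each of the four types it coincides with the sheaf computed in the previous paragraph. The delicate point is that the two order-reversals on the $\Theta$-side --- the one from passing to the opposite quiver $\bbZ\bbZ^{\op}$ and the one from the $\Bbbk$-dual --- must cancel against the single order-reversal built into $\Xi \colon \rep_{\Bbbk}\bbU \to \Sh_c(\Bbbk_\bbR)^{\op}$, so that the two objects genuinely agree in $\Sh_{\bbZ}(\Bbbk_\bbR)^{\op}$ rather than being mutually dual.

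It remains to make the family $(\eta_{\langle b,d\rangle_{\bbZ\bbZ}})$ natural. Since $\Hom(\bbI^{I},\bbI^{I'})$ is at most one-dimensional for interval representations of $\bbZ\bbZ$, one only needs compatible normalizations of the $\eta$'s and then commutativity of the square for a single nonzero map $\bbI^{I}\to\bbI^{I'}$ in each combinatorial configuration; choosing on both sides the tautological generators (the defining copies of $\Bbbk$ of an interval representation on the quiver side, and the stalkwise copies of $\Bbbk$ of the sheaves $S(\blank)$ and $\Xi(\blank)$ on the sheaf side) reduces each such check to an elementary verification, after which $\eta$ extends over all of $\rep_{\Bbbk}\bbZ\bbZ$ by additivity. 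The main obstacle is precisely the bookkeeping in the two middle paragraphs: identifying the block corners produced by $J$, the discontinuity data of the sheaves produced by $\Xi$ and by $S$, and the open/closed and left/right conventions, so that all four cases match on the nose and the contravariances cancel; once this is pinned down the remainder of the argument is formal.
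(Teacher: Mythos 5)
Your proposal is correct and follows essentially the same route as the paper: reduce to interval representations by interval-decomposability, verify the object-level agreement by combining Lemma~\ref{lem:send-block} with the explicit description of $\Xi$ on block representations from \citet{berkouk2019level}, and then check naturality on the (at most one-dimensional) Hom-spaces between interval modules. Your extra care with the contravariance bookkeeping and the normalization of the isomorphisms simply makes explicit what the paper's "easily verified" step leaves to the reader.
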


\begin{proof}
    Since any object of $\rep_{\Bbbk} \bbZ\bbZ$ is interval-decomposable, it suffices to show the commutativity for interval modules.
    On objects, we obtain the result from \citet[Lemma~4.1]{botnan2018algebraic} and \citet[Proposition~3.22]{berkouk2019level}.
    On morphisms, we only need to check non-zero morphisms between interval modules, which can be easily verified.
\end{proof}

We define a non-derived version of the convolution distance as follows.
For $F, G \in \Sh_c(\Bbbk_\bbR)$, $F$ and $G$ are said to be \emph{$H^0$-$\varepsilon$-isomorphic} if there exist $f \colon K_\varepsilon \starnd F \to G$ and $g \colon K_\varepsilon \starnd G \to F$ 
such that the following diagrams commute:
\begin{equation}
\xymatrix{
F^\bullet \starnd K_{2\varepsilon} \ar[rr]^-{\phi_{F^\bullet}^{2\varepsilon}} \ar[dr]_-{f \starnd K_{\varepsilon}} && F^\bullet\\
& G^\bullet \starnd K_{\varepsilon} \ar[ur]_-{g} &
},
\xymatrix{
G^\bullet \starnd K_{2\varepsilon} \ar[rr]^-{\phi_{G^\bullet}^{2\varepsilon}} \ar[dr]_-{g \starnd K_{\varepsilon}} && G^\bullet\\
& F^\bullet \starnd K_{\varepsilon} \ar[ur]_-{f} &
}.    
\end{equation}
We define a non-derived convolution distance by 
\begin{equation}
    d_{C,\text{nd}}(F,G)
    := 
    \inf\{ \varepsilon \ge 0 \mid \text{$F$ and $G$ are $H^0$-$\varepsilon$-isomorphic} \}
\end{equation}
for $F,G \in \Sh_c(\Bbbk_\bbR)$.
It can be easily checked that for $F,G \in \Sh_c(\Bbbk_\bbR)$, the inequality 
\begin{equation}
    d_{C,\text{nd}}(F,G)\leq d_{C}(F,G).
\end{equation}
holds.
In particular, if both $F$ and $G$ have no indecomposable direct summand of the form $\Bbbk_{(a,b)}$ with open interval $(a,b)$ in $\bbR$, 
then the equality 
\begin{equation}
    d_{C,\text{nd}}(F,G)= d_{C}(F,G)
\end{equation}
holds (see \citealt[Proposition 4.1 and Proposition 4.2]{berkouk2018derived}).

Through the functor $\Xi$, the interleaving distance $d_I^{\bbU}$ on $\rep_\Bbbk \bbU$ is compatible with $d_{C,\text{nd}}$ on $\Sh_c(\Bbbk_\bbR)$ as follows. 
This is a non-derived version of a main result of \citet{berkouk2019level}.

\begin{lemma}[{cf.\ \citealt[Theorem~4.21]{berkouk2019level}}]\label{lem:almost-isometry}
    For any $M,N \in \rep_{\Bbbk} \bbU$ whose indecomposable summands are only blocks of type $\mathbf{bb}^-, \mathbf{db}^+, \mathbf{hb}$, and $\mathbf{vb}$, one has 
    \begin{equation}
        d_I^{\bbU}(M,N)=d_{C,\text{nd}}(\Xi(M),\Xi(N)).
    \end{equation}
\end{lemma}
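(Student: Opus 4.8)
The plan is to transport $\varepsilon$-interleavings back and forth across the functors $\Xi$ and $\Psi$ of \citet{berkouk2019level}, exactly as in their (derived) isometry theorem but retaining only the $H^0$-information. Write $\calC \subseteq \rep_\Bbbk \bbU$ for the full subcategory whose objects have all indecomposable summands among blocks of type $\mathbf{bb}^-$, $\mathbf{db}^+$, $\mathbf{hb}$, and $\mathbf{vb}$; by hypothesis $M, N \in \calC$. Before the diagram chase I would record two preliminary facts. First, the isomorphisms recalled just before the statement, $\Xi(L[\vec\varepsilon]) \cong \Xi(L)\starnd K_\varepsilon$ for $L \in \calC$ and $\Psi(F\starnd K_\varepsilon) \cong \Psi(F)[\vec\varepsilon]$ for $F \in \Sh_c(\Bbbk_\bbR)$, are compatible with the canonical structure morphisms: under the first, the contravariant $\Xi$ carries $\phi_L^{\vec\varepsilon}\colon L\to L[\vec\varepsilon]$ to $\phi^{\varepsilon}_{\Xi(L)}\colon \Xi(L)\starnd K_\varepsilon \to \Xi(L)$ (contravariance reverses the arrow, as it must), and symmetrically for $\Psi$. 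Second, $\Psi\circ\Xi \simeq \id_\calC$ (the excerpt recalls only $\Xi\circ\Psi\simeq\id$). Both facts reduce, since every object of $\calC$ is a finite direct sum of block modules and the functors are additive, to an explicit check on the four admissible block types, using the description of $\Xi$ on interval modules in \citet[Proposition~3.22]{berkouk2019level} together with the elementary computation of $\Bbbk_B \starnd K_\varepsilon$; the only care needed is in tracking which endpoints of the relevant intervals are open or closed, since that decides whether a structure morphism is an isomorphism or zero.

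Granting these, the two inequalities are formal. For $d_{C,\mathrm{nd}}(\Xi(M),\Xi(N)) \le d_I^\bbU(M,N)$: let $f\colon M\to N[\vec\varepsilon]$, $g\colon N\to M[\vec\varepsilon]$ be an $\varepsilon$-interleaving pair, so $g[\vec\varepsilon]\circ f = \phi_M^{2\vec\varepsilon}$ and $f[\vec\varepsilon]\circ g = \phi_N^{2\vec\varepsilon}$. Applying $\Xi$, using functoriality of $(\blank)\starnd K_\varepsilon$, commutativity of convolution, and the first preliminary fact, one obtains morphisms $\Xi(g)\colon \Xi(M)\starnd K_\varepsilon \to \Xi(N)$ and $\Xi(f)\colon \Xi(N)\starnd K_\varepsilon \to \Xi(M)$ with $\Xi(f)\circ(\Xi(g)\starnd K_\varepsilon) = \phi^{2\varepsilon}_{\Xi(M)}$ and $\Xi(g)\circ(\Xi(f)\starnd K_\varepsilon) = \phi^{2\varepsilon}_{\Xi(N)}$, which is precisely the statement that $\Xi(M)$ and $\Xi(N)$ are $H^0$-$\varepsilon$-isomorphic (with the roles of $f$ and $g$ interchanged). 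For the reverse inequality: given an $H^0$-$\varepsilon$-isomorphism $f\colon K_\varepsilon\starnd\Xi(M)\to\Xi(N)$, $g\colon K_\varepsilon\starnd\Xi(N)\to\Xi(M)$, apply $\Psi$; using $\Psi(\Xi(M)\starnd K_\varepsilon)\cong \Psi(\Xi(M))[\vec\varepsilon]\cong M[\vec\varepsilon]$ and likewise for $N$ — this is where $\Psi\circ\Xi\simeq\id_\calC$ enters — one reads off $\Psi(g)\colon M\to N[\vec\varepsilon]$ and $\Psi(f)\colon N\to M[\vec\varepsilon]$ forming an $\varepsilon$-interleaving pair, so $d_I^\bbU(M,N)\le\varepsilon$. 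Taking the infimum over admissible $\varepsilon\ge0$ on both sides yields the equality.

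The main obstacle is the first preliminary fact — the \emph{naturality} of the identification $\Xi((\blank)[\vec\varepsilon]) \simeq \Xi(\blank)\starnd K_\varepsilon$ with respect to the canonical maps $\phi^{\vec\varepsilon}$ and $\phi^{\varepsilon}$. That $\Xi$ exchanges the two \emph{objects} is already in \citet{berkouk2019level}; what the present argument additionally needs is that it exchanges the two transition morphisms, since otherwise the commuting triangles defining an interleaving and an $H^0$-$\varepsilon$-isomorphism do not correspond. I expect this to be a routine but slightly tedious verification on the four block types; once it is in hand, everything else — the full-faithfulness-type statement yielding $\Psi\circ\Xi\simeq\id_\calC$ and the two diagram chases — is purely formal.
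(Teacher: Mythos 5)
Your overall strategy --- transporting interleavings through $\Xi$ and $\Psi$ via their compatibility with $[\vec \varepsilon]$ and $(\blank)\starnd K_\varepsilon$ --- is the same as the paper's, and your first inequality $d_{C,\mathrm{nd}}(\Xi(M),\Xi(N))\le d_I^{\bbU}(M,N)$ is argued exactly as there. The gap is your ``second preliminary fact'': the claim that $\Psi\circ\Xi\simeq\id$ on the subcategory of admissible block-decomposable modules. This is stronger than what is available and should not be expected to hold. The functor $\Xi$ lands in sheaves on $\bbR$, which cannot record whether a block contains its boundary; $\Psi$ then returns a block with some fixed boundary convention, so $\Psi\Xi(M)$ is in general a block module of the same type and the same interior as $M$ but with a possibly different open/closed status on the boundary --- hence not isomorphic to $M$. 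Note that $d_I^{\bbU}$ is only a pseudometric: two block modules differing only on their boundaries are $\varepsilon$-interleaved for every $\varepsilon>0$ and thus at distance $0$ without being isomorphic. You yourself locate the delicate point at ``tracking which endpoints \dots{} are open or closed''; that is exactly where the proposed check on the four block types breaks down, and it is why the reference the paper cites (Berkouk--Ginot, Corollary~4.20) asserts only $d_I^{\bbU}(M,\Psi\Xi(M))=0$, not an isomorphism of functors.

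The repair is cheap and is what the paper does: keep your reverse-direction diagram chase, which legitimately yields $d_I^{\bbU}(\Psi\Xi(M),\Psi\Xi(N))\le d_{C,\mathrm{nd}}(\Xi(M),\Xi(N))$ for the objects $\Psi\Xi(M)$ and $\Psi\Xi(N)$ (no identification with $M,N$ needed there), and then replace the unjustified isomorphism $\Psi\Xi(M)\cong M$ by the triangle inequality
\begin{equation}
d_I^{\bbU}(M,N)\le d_I^{\bbU}(M,\Psi\Xi(M))+d_I^{\bbU}(\Psi\Xi(M),\Psi\Xi(N))+d_I^{\bbU}(\Psi\Xi(N),N)\le d_{C,\mathrm{nd}}(\Xi(M),\Xi(N)),
\end{equation}
using $d_I^{\bbU}(M,\Psi\Xi(M))=d_I^{\bbU}(N,\Psi\Xi(N))=0$. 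With that substitution your argument closes; as written, the step ``apply $\Psi\circ\Xi\simeq\id$'' is a genuine gap.
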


\begin{proof}
    By the compatibility of $[\vec \varepsilon]$ and $(\blank) \starnd K_\varepsilon$, 
    for any $M,N \in \rep_{\Bbbk} \bbU$ whose indecomposable summands are only blocks of type $\mathbf{bb}^-, \mathbf{db}^+, \mathbf{hb}$, and $\mathbf{vb}$, we have 
    \begin{equation}
        d_{C,\mathrm{nd}}(\Xi(M),\Xi(N)) \le d_I^{\bbU}(M,N)
    \end{equation}
    and for any $F,G \in \Sh_c(\Bbbk_\bbR)$,
    \begin{equation}
        d_I^{\bbU}(\Psi(F),\Psi(G)) \le d_{C,\mathrm{nd}}(F,G).
    \end{equation}
    Similarly to \citet[Collorary~4.20]{berkouk2019level}, for any $M \in \rep_{\Bbbk} \bbU$ whose indecomposable summands are only blocks of type $\mathbf{bb}^-, \mathbf{db}^+, \mathbf{hb}$, and $\mathbf{vb}$, one has 
    \begin{equation}
        d_I^{\bbU}(M,\Psi\Xi(M))=0.
    \end{equation}
    Hence we obtain 
    \begin{align}
        d_I^{\bbU}(M,N) 
        & \le d_I^{\bbU}(M,\Psi\Xi(M))+d_I^{\bbU}(\Psi\Xi(M),\Psi\Xi(N))+d_I^{\bbU}(\Psi\Xi(N),N) \\
        & \le d_{C,\mathrm{nd}}(\Xi(M),\Xi(N)) \\
        & \le d_I^{\bbU}(M,N),
    \end{align}
    which proves the result.
\end{proof}

The following shows that the distance $d_{BL}$ coincides with the convolution distance on sheaves through the equivalence $\Theta$.

\begin{proposition} \label{prp:isoBLconv}
    For any $X,Y \in \rep_{\Bbbk} \bbZ\bbZ$, one has
    \begin{equation}
        d_{BL}(X,Y)
        = 
        d_{C,\text{nd}}(\Theta(X),\Theta(Y)).
    \end{equation}
\end{proposition}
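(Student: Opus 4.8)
The plan is to chain together three facts already established in the excerpt: the \emph{definition} of $d_{BL}$ as $d_I^{\bbU}$ pulled back along $J$ (Definition~\ref{dfn:BLdist}), the commutativity $\Theta = \Xi\circ J$ from Proposition~\ref{prp:commutativity}, and the non-derived isometry Lemma~\ref{lem:almost-isometry} comparing $d_I^{\bbU}$ with $d_{C,\mathrm{nd}}$. The only thing that needs checking is that the hypotheses of Lemma~\ref{lem:almost-isometry} apply to $J(X)$ and $J(Y)$, i.e.\ that these representations of $\bbU$ have indecomposable summands only among the block types $\mathbf{bb}^-, \mathbf{db}^+, \mathbf{hb}, \mathbf{vb}$.

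First I would recall that any $X\in\rep_{\Bbbk}\bbZ\bbZ$ admits an interval decomposition $X\cong\bigoplus_{\lambda} \bbI^{\langle b_\lambda,d_\lambda\rangle_{\bbZ\bbZ}}$ by the Krull--Schmidt--Remak--Azumaya theorem (as recalled in Subsection~\ref{subsec:quiver_representation}, following \citealt{botnan2017interval}), and similarly for $Y$. Since $J$ is additive and preserves direct sums, Lemma~\ref{lem:send-block} gives that $J(X)\cong\bigoplus_\lambda J(\bbI^{\langle b_\lambda,d_\lambda\rangle_{\bbZ\bbZ}})$ is a direct sum of block representations, each of type $\mathbf{bb}^-$, $\mathbf{db}^+$, $\mathbf{hb}$, or $\mathbf{vb}$ according as the interval is closed, open, right-open, or left-open; the same holds for $J(Y)$. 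Thus $M:=J(X)$ and $N:=J(Y)$ satisfy the hypothesis of Lemma~\ref{lem:almost-isometry}.

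Applying Lemma~\ref{lem:almost-isometry} then yields
\[
d_I^{\bbU}(J(X),J(Y)) = d_{C,\mathrm{nd}}\bigl(\Xi(J(X)),\Xi(J(Y))\bigr).
\]
By Definition~\ref{dfn:BLdist} the left-hand side is exactly $d_{BL}(X,Y)$, while by the commutativity of the diagram in Proposition~\ref{prp:commutativity} we have $\Xi\circ J=\Theta$, so the right-hand side equals $d_{C,\mathrm{nd}}(\Theta(X),\Theta(Y))$. Combining these identities gives $d_{BL}(X,Y)=d_{C,\mathrm{nd}}(\Theta(X),\Theta(Y))$, as desired.

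The one place that requires a little care — though it amounts to bookkeeping rather than computation — is the reduction to interval summands in the possibly infinite-rank setting: one must be sure that $J$ genuinely commutes with arbitrary direct sums of interval modules, so that the block-type classification of Lemma~\ref{lem:send-block} propagates to $J(X)$ and $J(Y)$ as a whole. Once this is granted, the statement is a purely formal consequence of Proposition~\ref{prp:commutativity} and Lemma~\ref{lem:almost-isometry}, and no further estimate is needed.
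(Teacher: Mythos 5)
Your proposal is correct and follows essentially the same route as the paper: verify via Lemma~\ref{lem:send-block} that $J(X)$ and $J(Y)$ are block-decomposable with summands only of types $\mathbf{bb}^-, \mathbf{db}^+, \mathbf{hb}, \mathbf{vb}$, then combine Proposition~\ref{prp:commutativity} with Lemma~\ref{lem:almost-isometry} and the definition of $d_{BL}$. The extra remarks on the interval decomposition and compatibility of $J$ with direct sums are a harmless elaboration of the same argument.
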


\begin{proof}
    By Lemma~\ref{lem:send-block}, for $X \in \rep_{\Bbbk} \bbZ\bbZ$, $J(X)$ is a block-decomposable representation whose blocks are only of type $\mathbf{bb}^-, \mathbf{db}^+, \mathbf{hb}$, and $\mathbf{vb}$.
    Thus the result follows from \pref{prp:commutativity} and \pref{lem:almost-isometry}.
\end{proof}

As a consequence of Proposition~\ref{prp:comparison} and Proposition~\ref{prp:isoBLconv}, our induced distance is incomparable with the non-derived and the ordinary convolution distance defined above in the purely zigzag setting. 
Indeed, for $X,Y\in \calY_c$, we have
\begin{equation}
d^{z_1}(X,Y)\leq d_{BL}(X,Y)=        d_{C,\text{nd}}(\Theta(X),\Theta(Y))=        d_{C}(\Theta(X),\Theta(Y)).
\end{equation}
On the other hand, for $X,Y\in \calY_{oc}$, we have
\begin{equation}
d^{z_1}(X,Y)\geq d_{BL}(X,Y)=        d_{C,\text{nd}}(\Theta(X),\Theta(Y))=        d_{C}(\Theta(X),\Theta(Y)).
\end{equation}
Recall that $\calY_c$ and $\calY_{oc}$ are the set of interval representations in  $\rep_{\Bbbk} A_n(z_1)$ corresponds to closed and left-open interval representations of $\bbZ\bbZ$.

\subsection{Interleaving distance for equioriented quiver and convolution distance}\label{subsec:sheaf-equioriented}

In this subsection, we also see that the interleaving distance on $\Db(\rep_{\Bbbk} A_n)$ can be realized as a modified version of the convolution distance on $\Db(\Sh_c(\Bbbk_\bbR))$.
As seen in \pref{subsec:arbzigzag}, there exists a fully faithful functor \[\rep_{\Bbbk} A_m \xrightarrow{\iota_{e}} \rep_{\Bbbk} A_n(z_1) \xrightarrow{\widetilde{\mu_1}} \rep_{\Bbbk} \bbZ\bbZ.\]
We describe the corresponding subcategory of $\Sh_\bbZ(\Bbbk_\bbR)$ through the equivalence $\Theta \colon \rep_{\Bbbk} \bbZ\bbZ \xrightarrow{\sim} \Sh_\bbZ(\Bbbk_\bbR)^{\op}$.
For that purpose, we use the notion of the mircosupports of sheaves (see \citealt{KS90}).
For $F \in \Sh(\Bbbk_\bbR)$, one can define $\MS(F) \subset T^*\bbR$, which describes the singular co-direction of $F$.

\begin{definition}
    For $m \in \bbZ_{\ge 0}$, 
    $\Sh_{m,+}(\Bbbk_\bbR)$ denotes the full subcategory of $\Sh_\bbZ(\Bbbk_\bbR)$ consisting of objects $F$ satisfying the following conditions:
    \begin{enumerate}
        \item $\Supp(F) \subset [1,m+1]$,
        \item $\MS(F) \subset \bbR \times \bbR_{\ge 0}$.
    \end{enumerate}
\end{definition}

\begin{proposition}
    The restriction of the functor $\Theta$ induces an equivalence of categories $\Theta' \colon \rep_{\Bbbk} A_m \simto \Sh_{m,+}(\Bbbk_\bbR)^{\op}$.
\end{proposition}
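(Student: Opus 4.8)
The plan is to recognize $\Theta'$ as nothing but the restriction of the equivalence $\Theta = S^{\op}\circ D$ of \pref{prp:rep-sheaf-equivalence} along the fully faithful embedding $\widetilde{\mu_1}\circ\iota_e\colon \rep_{\Bbbk} A_m \to \rep_{\Bbbk}\bbZ\bbZ$ from \pref{subsec:arbzigzag}. A composite of a fully faithful functor with an equivalence is fully faithful, so $\Theta'\colon \rep_{\Bbbk} A_m \to \Sh_\bbZ(\Bbbk_\bbR)^{\op}$ is automatically fully faithful; the entire content is to show that its essential image is exactly $\Sh_{m,+}(\Bbbk_\bbR)^{\op}$. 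Since a fully faithful functor is injective on isomorphism classes, preserves and detects indecomposability, and preserves finite direct sums, and since both $\rep_{\Bbbk} A_m$ and $\Sh_{m,+}(\Bbbk_\bbR)$ are Krull--Schmidt with every object a finite direct sum of indecomposables — classical for $\rep_{\Bbbk} A_m$, and for $\Sh_{m,+}(\Bbbk_\bbR)$ a consequence of \pref{prp:rep-sheaf-equivalence} together with interval decomposability (\citealt{botnan2017interval}), because an object of $\Sh_\bbZ(\Bbbk_\bbR)$ with support in a bounded interval is a finite-dimensional representation of a finite zigzag $A$-type quiver — it suffices to check that $\Theta'$ carries the indecomposables of $\rep_{\Bbbk} A_m$ bijectively onto the indecomposables of $\Sh_{m,+}(\Bbbk_\bbR)^{\op}$.

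The indecomposables of $\rep_{\Bbbk} A_m$ are the interval modules $\bbI[b,d]$, $1\le b\le d\le m$, and there are $m(m+1)/2$ of them. On the sheaf side, by \pref{prp:rep-sheaf-equivalence} and interval decomposability of $\rep_{\Bbbk}\bbZ\bbZ^{\op}$, every object of $\Sh_\bbZ(\Bbbk_\bbR)$ is a finite direct sum of sheaves $\Bbbk_I$ with $I$ an interval of $\bbR$ whose endpoints lie in $\bbZ\cup\{\pm\infty\}$ (one of the four types closed, open, left-open, right-open). Thus the indecomposables of $\Sh_{m,+}(\Bbbk_\bbR)$ are precisely those $\Bbbk_I$ with $\Supp(\Bbbk_I)=\bar I\subseteq[1,m+1]$ and $\MS(\Bbbk_I)\subseteq\bbR\times\bbR_{\ge0}$: the support condition forces $I$ bounded with $\bar I\subseteq[1,m+1]$, and the microsupport condition — computed for interval sheaves by the standard formulas (\citealt{KS90}), or equivalently read off through \pref{prp:rep-sheaf-equivalence} as the requirement that the relevant generization maps of $F$ at each integer be isomorphisms — pins down a single handedness at each endpoint. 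One finds that the admissible $I$ form one fixed half-open family parametrized by pairs of integers $1\le a<b\le m+1$, again $m(m+1)/2$ in number.

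It remains to identify $\Theta'(\bbI[b,d])$ with the corresponding interval sheaf and verify the bijection. For this I would track $\bbI[b,d]$ through the four functors: $\iota_e$ sends it to an interval representation of $A_n(z_1)$ with $n=2m-1$ (all structure maps identities), $\widetilde{\mu_1}$ sends that to an interval representation of $\bbZ\bbZ$ supported on a staircase, whose type (closed/right-open/$\dots$) is dictated by the parities of its endpoints as in \pref{lem:corresponding}, the $\Bbbk$-dual $D$ turns it into an interval representation of $\bbZ\bbZ^{\op}$ (interchanging the two half-open types and fixing the closed and the open ones), and $S$ finally produces $\Bbbk_{I(b,d)}$. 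A direct inspection shows that $I(b,d)$ has the predicted half-open form, that $\bar I(b,d)\subseteq[1,m+1]$, and that $(b,d)\mapsto I(b,d)$ is exactly the bijection onto the parametrizing set of the previous paragraph; in particular $\Theta'(\rep_{\Bbbk} A_m)\subseteq\Sh_{m,+}(\Bbbk_\bbR)^{\op}$ and every indecomposable of $\Sh_{m,+}(\Bbbk_\bbR)^{\op}$ lies in the essential image. Together with full faithfulness and Krull--Schmidt, this yields $\Theta'\colon\rep_{\Bbbk} A_m\simto\Sh_{m,+}(\Bbbk_\bbR)^{\op}$.

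The main obstacle is the bookkeeping in the last paragraph: composing the four functors with the correct conventions — especially the effect of $D$ on interval types and the precise indexing that $\widetilde{\mu_1}\circ\iota_e$ produces in the equioriented case — and converting the condition $\MS(F)\subseteq\bbR\times\bbR_{\ge0}$ into the combinatorial condition on $I$ with the right sign, so that the endpoint data and the two counts match and $\Theta'$ lands onto $\Sh_{m,+}(\Bbbk_\bbR)^{\op}$ rather than some proper or shifted subcategory. Beyond this matching of conventions there is no conceptual difficulty.
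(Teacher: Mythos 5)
Your proposal is correct and follows essentially the same route as the paper: both reduce to identifying the image of an interval module under $\Theta$ as a half-open interval sheaf $\Bbbk_{[a,b)}$ with integer endpoints in $[1,m+1]$ (the paper obtains this via Guillermou's decomposition theorem together with the fact that the restriction maps $F((i-1,i+1)) \to F((i,i+1))$ are isomorphisms) and then read off the support and microsupport conditions. You are in fact slightly more thorough than the published proof, which only verifies that $\Theta(\rep_{\Bbbk} A_m)$ is contained in $\Sh_{m,+}(\Bbbk_\bbR)^{\op}$ and leaves essential surjectivity implicit, whereas your Krull--Schmidt counting of indecomposables on both sides handles that point explicitly.
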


\begin{proof}
    Let $M \in \rep_\Bbbk \bbU$ and set $F:=\Theta(M)$.
    By the definition of $\iota_{e}$ and $\Theta$, 
    $\Supp(F) \subset [1,m+1]$ and the restriction morphism $F((i-1,i+1)) \to F((i,i+1))$ is an isomorphism for any $i \in \bbZ$. 
    By \citet[Corollary~7.3]{Gu16}, $F$ is decomposed as 
    \begin{equation}
        F \cong \bigoplus_\alpha \Bbbk_{I_\alpha},
    \end{equation}
    where each $I_\alpha$ is a interval whose end points are in $\bbZ$.
    Combining this decomposition with the above isomorphisms, we find that each $I_\alpha$ is of the form $[a,b)$ for some $a<b \in \bbR$.
    Hence, we have $\MS(F) \subset \bbR \times \bbR_{\ge 0}$.
\end{proof}

We also denote by $\Theta' \colon \Db(\rep_{\Bbbk} A_m) \simto \Db(\Sh_{m,+}(\Bbbk_\bbR)^{\op})$ the induced derived equivalence.

For $\delta \in \bbZ_{\ge 0}$, we define a functor $(\delta) \colon \Db(\Sh_{m,+}(\Bbbk_\bbR)) \to \Db(\Sh_{m,+}(\Bbbk_\bbR))$ by $F^\bullet(\delta):=(F^\bullet \star K_\delta)_{[1,m+1)}$.
Then for $F^\bullet \in \Db(\Sh_{m,+}(\Bbbk_\bbR))$ there exists a canonical morphism $\phi_{F^\bullet,m,+}^\delta \colon F^\bullet(\delta) \to F^\bullet$ for any $\delta \in \bbZ_{\ge 0}$.
Using the morphism, one can define $\delta$-interleaving for objects of $\Db(\Sh_{m,+}(\Bbbk_\bbR))$ and a modified version of the convolution distance by
\begin{equation}
    d_{C,m,+}(F^\bullet,G^\bullet)
    := 
    \inf \{ \delta \in \bbZ_{\ge 0} \mid \text{$F^\bullet$ and $G^\bullet$ are $\delta$-interleaved} \}.
\end{equation}

The derived equivalence $\Theta'$ is a contravariant functor so that $\Theta'$ sends the diagram in Definition~\ref{dfn:dinterleave} to the opposite one. 
Thus we have the following. 

\begin{proposition} \label{prp:modifyconv}
    For $X^{\bullet} \in \Db(\rep_{\Bbbk} A_m)$, one has $\Theta'(X^{\bullet}(\delta)) \cong \Theta'(X^{\bullet})(\delta)$.
    In particular, 
    \begin{equation}
        d_I^D(X^{\bullet},Y^{\bullet}) 
        =
        d_{C,m,+}(\Theta'(X^{\bullet}),\Theta'(Y^{\bullet})).
    \end{equation}
\end{proposition}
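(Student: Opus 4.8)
The plan is to prove the object-level (and functorial) statement first — that $\Theta'$ intertwines the two $\delta$-shift functors, compatibly with the transition morphisms — and then obtain the distance equality as a formal consequence. Concretely, I would establish a natural isomorphism $\Theta'(X^{\bullet}(\delta))\cong\Theta'(X^{\bullet})(\delta)$ under which $\Theta'$ carries the transition morphism $\phi^{\delta}_{X^{\bullet}}\colon X^{\bullet}\to X^{\bullet}(\delta)$ of \pref{dfn:per} to the morphism $\phi^{\delta}_{\Theta'(X^{\bullet}),m,+}\colon \Theta'(X^{\bullet})(\delta)\to\Theta'(X^{\bullet})$ (note the reversal of direction, forced by the contravariance of $\Theta'$). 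Granting this, a derived $\delta$-interleaving pair $(f^{\bullet},g^{\bullet})$ for $X^{\bullet},Y^{\bullet}$ is sent by $\Theta'$ to the pair $(\Theta'(g^{\bullet}),\Theta'(f^{\bullet}))$ for $\Theta'(X^{\bullet}),\Theta'(Y^{\bullet})$ — the two interleaving morphisms swapping roles because $\Theta'$ is contravariant — and conversely, since $\Theta'$ is an equivalence; taking infima over $\delta$ yields $d_I^D(X^{\bullet},Y^{\bullet})=d_{C,m,+}(\Theta'(X^{\bullet}),\Theta'(Y^{\bullet}))$. The identity $\Theta'(X^{\bullet})(\delta)(\delta')\cong\Theta'(X^{\bullet})(\delta+\delta')$, which one needs to make the composition conditions of the interleavings transport correctly, does not have to be checked directly on sheaves: it follows from the evident rep-level identity $X^{\bullet}(\delta)(\delta')\cong X^{\bullet}(\delta+\delta')$ together with the intertwining, since $\Theta'$ is essentially surjective.

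To prove the intertwining I would first reduce to interval representations. By \pref{lem:dec} every object of $\Db(\rep_{\Bbbk}A_m)$ is a finite direct sum of stalk complexes $\bbI[b,d][-i]$; both $\delta$-shift functors commute with the translation functor $[1]$ (the rep-side shift is defined termwise, and $(\blank)\star K_{\delta}$ together with the restriction to $[1,m+1)$ commutes with translation on sheaves), and $\Theta'$ is a triangulated (contravariant) equivalence, so it is enough to treat $X^{\bullet}=\bbI[b,d]$ with $1\le b\le d\le m$. Next I would compute $\Theta'(\bbI[b,d])$ explicitly: the embedding $\widetilde{\mu_1}\circ\iota_e$ sends $\bbI[b,d]$ to one of the four interval types of $\bbZ\bbZ$ (one tracks which identity arrows $\iota_e$ inserts), \pref{prp:commutativity} identifies $\Theta$ with $\Xi\circ J$, \pref{lem:send-block} identifies $J$ of that interval representation with the corresponding block, and the description of $\Xi$ from \citet{berkouk2019level} then shows that $\Theta'(\bbI[b,d])$ is a half-open constructible sheaf $\Bbbk_{I(b,d)}$ with integer endpoints contained in $[1,m+1]$. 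Finally I would match the two sides: directly from \pref{dfn:per}(2), $\bbI[b,d](\delta)$ is the truncated interval representation $\bbI[\max(1,b-\delta),\,d-\delta]$ when $d\ge\delta+1$ and $0$ otherwise; on the sheaf side, $\Bbbk_{I(b,d)}\star K_{\delta}$ is again a half-open interval sheaf with both endpoints moved outward by $\delta$, and the restriction to $[1,m+1)$ clips it back into $[1,m+1]$ — in particular annihilating it precisely in the degenerate range — so one checks that the two operations agree through the formula $I(\blank,\blank)$, and likewise that $\Theta'$ identifies $\phi^{\delta}_{\bbI[b,d]}$ with $\phi^{\delta}_{\Bbbk_{I(b,d)},m,+}$. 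On morphisms one only needs the (essentially unique) nonzero maps between interval objects, so naturality of the resulting isomorphism is a short case check. For the reduction of derived interleavings to degreewise interleavings of cohomology objects — used implicitly on both sides — note that $\Sh_{m,+}(\Bbbk_\bbR)$, being equivalent via $\Theta'$ to $(\rep_{\Bbbk}A_m)^{\mathrm{op}}$, also has global dimension at most $1$, so the analogue of \pref{cor:dint} holds there.

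I expect the main obstacle to be exactly the matching step: pinning down the endpoints $I(b,d)$ of $\Theta'(\bbI[b,d])$ and verifying that ``convolve with $K_{\delta}$, then restrict to $[1,m+1)$'' reproduces, uniformly across all boundary cases, the truncation (and the vanishing for $d<\delta+1$) built into the rep-side shift $(\delta)$; the clipping to the bounded window $[1,m+1)$ is precisely what distinguishes this shift from the unbounded convolution underlying $d_{BL}$, and getting the degenerate cases to line up is the delicate point. The remaining ingredients — the reduction to interval representations, the treatment of morphisms and naturality, and the formal transport of $\delta$-interleavings through the contravariant equivalence — are routine.
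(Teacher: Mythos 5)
Your overall architecture is sound and, in fact, considerably more detailed than what the paper records (the paper states \pref{prp:modifyconv} with no proof, treating the object-level isomorphism as built into the construction of $(\delta)$ on $\Db(\Sh_{m,+}(\Bbbk_\bbR))$ and the distance equality as a formal consequence of contravariance). The reduction to interval stalk complexes via \pref{lem:dec}, the identification of $\Theta'(\bbI[b,d])$ as a half-open interval sheaf $\Bbbk_{[a',b')}$ with integer endpoints in $[1,m+1]$, the role-swap of the two interleaving morphisms under the contravariant equivalence, and the remark that the analogue of \pref{cor:dint} holds on the sheaf side are all correct and would be needed in a complete write-up.

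However, there is a genuine error precisely at the step you yourself flag as the crux. You assert that $\Bbbk_{I(b,d)}\star K_\delta$ is the half-open interval sheaf ``with both endpoints moved outward by $\delta$.'' That is the behaviour of convolution on \emph{closed} intervals ($\Bbbk_{[a,b]}\star K_\delta\cong\Bbbk_{[a-\delta,b+\delta]}$). On half-open intervals a direct stalk computation (the fibre of $s$ over $t$ meets $[a,b)\times[-\delta,\delta]$ in $[a,b)\cap[t-\delta,t+\delta]$, and $R\Gamma_c$ of the constant sheaf on a half-open interval vanishes) gives
\begin{equation}
\Bbbk_{[a,b)}\star K_\delta\cong\Bbbk_{[a-\delta,\,b-\delta)},
\end{equation}
i.e.\ a \emph{translation} by $-\delta$ of both endpoints, not an outward expansion. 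This is not a cosmetic point: with your formula the right endpoint of $\Theta'(\bbI[b,d])(\delta)$ would sit at $\min(m+1,b'+\delta)$ while the right endpoint of $\Theta'(\bbI[b,d](\delta))$ sits $\delta$ \emph{below} $b'$ (since $\bbI[b,d](\delta)=\bbI[\max(1,b-\delta),d-\delta]$ shifts the interval down), so the two sides would disagree by $2\delta$ and the matching step would fail outright. The correct statement is that convolution acts on the (entirely half-open) image of $\Theta'$ exactly as the rep-side shift acts on intervals --- a downward translation --- and the restriction $(\blank)_{[1,m+1)}$ serves only to clip at the left endpoint $1$ and to annihilate the sheaf once the translated interval leaves the window, which is precisely the truncation and vanishing built into \pref{dfn:per}(2). (This is also why half-open sheaves, unlike closed ones, have finite convolution distance to $0$, consistent with \pref{prp:BL}.) Once you replace the expansion formula by the translation formula, the rest of your plan goes through.
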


\begin{remark}
In Subsection~\ref{subsec:arbzigzag}, we confirm that 
our induced distance $d^a$ and the block distance $d_{BL}$ are incomparable in the arbitrary zigzag setting.
In particular, in the equioriented setting, they are incomparable.
Thus, Proposition~\ref{prp:isoBLconv} and Proposition~\ref{prp:modifyconv} imply that 
the distances $d_{C}$ and $d_{C,m,+}$ on $\Db(\Sh_{m,+}(\Bbbk_\bbR))$ are incomparable.
\end{remark}

\section*{Acknowledgements}
We are grateful to Kosuke Sakurai, who was a Master's student of the first author at Tohoku University, for his cooperation in 
proving the converse AST in our derived setting. 
The second author thanks Nicolas Berkouk for helpful discussion.
The third author also thanks Emerson G.\ Escolar and Killian F.\ Meehan for their helpful discussions.

The first author was partially supported by JST CREST Mathematics (15656429) and JSPS Grant-in-Aid for Scientific Research (A) (JP20H00119).
The second author was partially supported by JST ACT-X (JPMJAX1903).
The third author was partially supported by JSPS Grant-in-Aid for Scientific Research (C)  (JP20K03760) and
Osaka City University Advanced Mathematical Institute (MEXT Joint Usage/Research Center on Mathematics and Theoretical Physics JPMXP0619217849).

\section*{Conflict of interest}
On behalf of all authors, the corresponding author states that there is no conflict of interest.

\bibliographystyle{spbasic}
\bibliography{ref}

\begin{thebibliography}{31}
\providecommand{\natexlab}[1]{#1}
\providecommand{\url}[1]{{#1}}
\providecommand{\urlprefix}{URL }
\expandafter\ifx\csname urlstyle\endcsname\relax
  \providecommand{\doi}[1]{DOI~\discretionary{}{}{}#1}\else
  \providecommand{\doi}{DOI~\discretionary{}{}{}\begingroup
  \urlstyle{rm}\Url}\fi
\providecommand{\eprint}[2][]{\url{#2}}

\bibitem[{Assem et~al.(2006)Assem, Skowronski, and Simson}]{assem2006elements}
Assem I, Skowronski A, Simson D (2006) Elements of the Representation Theory of
  Associative Algebras: Volume 1: Techniques of Representation Theory, vol~65.
  Cambridge University Press

\bibitem[{Auslander et~al.(1997)Auslander, Reiten, and
  Smalo}]{auslander1997representation}
Auslander M, Reiten I, Smalo SO (1997) Representation theory of Artin algebras,
  vol~36. Cambridge university press

\bibitem[{Bauer and Lesnick(2015)}]{bauer2015induced}
Bauer U, Lesnick M (2015) Induced matchings and the algebraic stability of
  persistence barcodes. JoCG 6(2):162--191, \doi{10.20382/jocg.v6i2a9},
  \urlprefix\url{https://doi.org/10.20382/jocg.v6i2a9}

\bibitem[{Berkouk and Ginot(2018)}]{berkouk2018derived}
Berkouk N, Ginot G (2018) A derived isometry theorem for constructible sheaves
  on $ \mathbb{R}$. arXiv preprint arXiv:180509694

\bibitem[{Berkouk et~al.(2019)Berkouk, Ginot, and Oudot}]{berkouk2019level}
Berkouk N, Ginot G, Oudot S (2019) Level-sets persistence and sheaf theory.
  arXiv preprint arXiv:190709759

\bibitem[{Bjerkevik(2016)}]{bjerkevik2016stability}
Bjerkevik HB (2016) Stability of higher-dimensional interval decomposable
  persistence modules. arXiv preprint arXiv:160902086

\bibitem[{Bongartz(1981)}]{bongartz1981tilted}
Bongartz K (1981) Tilted algebras. In: Representations of algebras, Springer,
  pp 26--38

\bibitem[{Botnan(2017)}]{botnan2017interval}
Botnan M (2017) Interval decomposition of infinite zigzag persistence modules.
  Proc Amer Math Soc 145:3571--3577

\bibitem[{Botnan and Lesnick(2018)}]{botnan2018algebraic}
Botnan M, Lesnick M (2018) Algebraic stability of zigzag persistence modules.
  Algebraic \& geometric topology 18(6):3133--3204

\bibitem[{Brenner and Butler(1980)}]{brenner1980generalizations}
Brenner S, Butler MC (1980) Generalizations of the bernstein-gelfand-ponomarev
  reflection functors. In: Representation theory II, Springer, pp 103--169

\bibitem[{Carlsson and de~Silva(2010)}]{carlsson2010zigzag}
Carlsson G, de~Silva V (2010) Zigzag persistence. Foundations of computational
  mathematics 10(4):367--405

\bibitem[{Chazal et~al.(2009)Chazal, Cohen-Steiner, Glisse, Guibas, and
  Oudot}]{chazal2009proximity}
Chazal F, Cohen-Steiner D, Glisse M, Guibas LJ, Oudot SY (2009) Proximity of
  persistence modules and their diagrams. In: Proceedings of the twenty-fifth
  annual symposium on Computational geometry, pp 237--246

\bibitem[{Chen et~al.(2008)Chen, Ye, and Zhang}]{chen2008algebras}
Chen XW, Ye Y, Zhang P (2008) Algebras of derived dimension zero.
  Communications in Algebra{\textregistered} 36(1):1--10

\bibitem[{Cohen-Steiner et~al.(2007)Cohen-Steiner, Edelsbrunner, and
  Harer}]{cohen2007stability}
Cohen-Steiner D, Edelsbrunner H, Harer J (2007) Stability of persistence
  diagrams. Discrete \& Computational Geometry 37(1):103--120

\bibitem[{Edelsbrunner et~al.(2002)Edelsbrunner, Letscher, and
  Zomorodian}]{edelsbrunner2002topological}
Edelsbrunner H, Letscher D, Zomorodian A (2002) Topological persistence and
  simplification. Discrete Comput Geom 28(4):511–533,
  \doi{10.1007/s00454-002-2885-2},
  \urlprefix\url{https://doi.org/10.1007/s00454-002-2885-2}

\bibitem[{Gabriel(1972)}]{gabriel1972unzerlegbare}
Gabriel P (1972) Unzerlegbare darstellungen i. Manuscripta mathematica
  6(1):71--103

\bibitem[{Guillermou(2016)}]{Gu16}
Guillermou S (2016) The three cusps conjecture. arXiv preprint arXiv:160307876

\bibitem[{Happel(1988)}]{happel1988triangulated}
Happel D (1988) Triangulated categories in the representation of finite
  dimensional algebras, vol 119. Cambridge University Press

\bibitem[{Happel and Ringel(1982)}]{happel1982tilted}
Happel D, Ringel CM (1982) Tilted algebras. Transactions of the American
  Mathematical Society 274(2):399--443

\bibitem[{Hiraoka et~al.(2016)Hiraoka, Nakamura, Hirata, Escolar, Matsue, and
  Nishiura}]{hiraoka2016hierarchical}
Hiraoka Y, Nakamura T, Hirata A, Escolar EG, Matsue K, Nishiura Y (2016)
  Hierarchical structures of amorphous solids characterized by persistent
  homology. Proceedings of the National Academy of Sciences 113(26):7035--7040

\bibitem[{Kashiwara and Schapira(1990)}]{KS90}
Kashiwara M, Schapira P (1990) Sheaves on manifolds, Grundlehren der
  Mathematischen Wissenschaften, vol 292. Springer-Verlag, Berlin

\bibitem[{Kashiwara and Schapira(2018)}]{kashiwara2018persistent}
Kashiwara M, Schapira P (2018) Persistent homology and microlocal sheaf theory.
  Journal of Applied and Computational Topology 2(1-2):83--113

\bibitem[{Lee et~al.(2017)Lee, Barthel, D{\l}otko, Moosavi, Hess, and
  Smit}]{lee2017quantifying}
Lee Y, Barthel SD, D{\l}otko P, Moosavi SM, Hess K, Smit B (2017) Quantifying
  similarity of pore-geometry in nanoporous materials. Nature communications
  8:15396

\bibitem[{Lesnick(2015)}]{lesnick2015theory}
Lesnick M (2015) The theory of the interleaving distance on multidimensional
  persistence modules. Foundations of Computational Mathematics 15(3):613--650

\bibitem[{Meehan and Meyer(2020)}]{meehan2020persistence}
Meehan K, Meyer DC (2020) Persistence and stability of the $\mathbb{A}_n$
  quiver. arXiv preprint arXiv:200106172

\bibitem[{Miyashita(1986)}]{miyashita1986tilting}
Miyashita Y (1986) Tilting modules of finite projective dimension.
  Mathematische Zeitschrift 193(1):113--146

\bibitem[{Oyama et~al.(2019)Oyama, Hiraoka, Obayashi, Saikawa, Furui,
  Shiraishi, Kumagai, Hayashi, and Kotoku}]{oyama2019hepatic}
Oyama A, Hiraoka Y, Obayashi I, Saikawa Y, Furui S, Shiraishi K, Kumagai S,
  Hayashi T, Kotoku J (2019) Hepatic tumor classification using texture and
  topology analysis of non-contrast-enhanced three-dimensional t1-weighted mr
  images with a radiomics approach. Scientific reports 9:8764

\bibitem[{Rickard(1989)}]{rickard1989morita}
Rickard J (1989) Morita theory for derived categories. Journal of the London
  Mathematical Society 2(3):436--456

\bibitem[{Rickard(1991)}]{rickard1991derived}
Rickard J (1991) Derived equivalences as derived functors. Journal of the
  London Mathematical Society 2(1):37--48

\bibitem[{Saadatfar et~al.(2017)Saadatfar, Takeuchi, Robins, Francois, and
  Hiraoka}]{saadatfar2017pore}
Saadatfar M, Takeuchi H, Robins V, Francois N, Hiraoka Y (2017) Pore
  configuration landscape of granular crystallization. Nature communications
  8:15082

\bibitem[{Schiffler(2014)}]{schiffler2014quiver}
Schiffler R (2014) Quiver representations. Springer

\end{thebibliography}

\end{document}